\documentclass[a4paper,12pt,english]{amsart}


\usepackage{amsmath,upref,amssymb}

\swapnumbers

\input xypic
\CompileMatrices


\textwidth15cm
\textheight22cm
\oddsidemargin0.4cm
\evensidemargin0.4cm

\RequirePackage{calrsfs}
\DeclareSymbolFont{rsfscript}{OMS}{rsfs}{m}{b}
\DeclareSymbolFontAlphabet{\mathrsfs}{rsfscript}
\renewcommand{\mathcal}{\mathrsfs}

\newcommand{\nc}{\newcommand}

\nc{\on}{\operatorname}

\nc{\DMO}{\DeclareMathOperator}
%

\numberwithin{equation}{subsection}


%

\theoremstyle{plain}
\newtheorem*{thm*}{Theorem}
\newtheorem*{lem*}{Lemma}
\newtheorem*{prop*}{Proposition}
\newtheorem*{cor*}{Corollary}

\theoremstyle{definition}
\newtheorem*{defn*}{Definition} 
\newtheorem*{conj*}{Conjecture}

\theoremstyle{remark}
\newtheorem*{rem*}{Remarks}

\makeatletter
\renewcommand{\p@enumi}{\thesubsection}
\renewcommand{\p@enumii}{\thesubsection\theenumi}
\makeatother

\newenvironment{num}{\renewcommand{\theenumi}{(\alph{enumi})}
                      
                                          \begin{enumerate} }
                    {\end{enumerate} }
 


 \newenvironment{subconds}{
                       
                        \begin{enumerate} }
                     {\end{enumerate} }


\nc{\be}{\begin{equation}}
\nc{\ee}{\end{equation}}

\nc{\bee}{\begin{equation*}}
\nc{\eee}{\end{equation*}}

\nc{\bs}{\begin{split}}
\nc{\es}{\end{split}}

\nc{\bc}{\begin{cases}}
\nc{\ec}{\end{cases}}

\nc{\bml}{\begin{multline}}
\nc{\eml}{\end{multline}}

\nc{\bmll}{\begin{multline*}}
\nc{\emll}{\end{multline*}}

\nc{\lb}{\label}


\newcommand{\mpair}[1]{\pair{\,#1\,}}

\newcommand{\mset}[1]{\set{\,#1\,}}

\newcommand{\pair}[1]{\langle #1\rangle}

\newcommand{\set}[1]{\{#1\}}


\nc{\mc}{{\mathcal}}
\nc{\mf}{{\mathfrak}}
\nc{\CA}{{\mathcal A}}
\nc{\CB}{{\mathcal B}}
\nc{\CC}{{\mathcal C}}
\nc{\CD}{{\mathcal D}}
\nc{\CE}{{\mathcal E}}
\nc{\CF}{{\mathcal F}}
\nc{\CG}{{\mathcal G}}
\nc{\CH}{{\mathcal H}}
\nc{\CI}{{\mathcal I }}
\nc{\CJ}{{\mathcal J }}
\nc{\CK}{{\mathcal K }}
\nc{\CL}{{\mathcal L}}
\nc{\CM}{{\mathcal M}}
\nc{\CN}{{\mathcal N}}
\nc{\CO}{{\mathcal O}}
\nc{\CP}{{\mathcal P}}
\nc{\CQ}{{\mathcal Q}}
\nc{\CR}{{\mathcal R}}
\nc{\CS}{{\mathcal S}}
\nc{\CT}{{\mathcal T}}
\nc{\CU}{{\mathcal U}}
\nc{\CV}{{\mathcal V}}
\nc{\CW}{{\mathcal W}}
\nc{\CX}{{\mathcal X}}
\nc{\CY}{{\mathcal Y}}
\nc{\CZ}{{\mathcal Z}}
\nc{\bbZ}{{\mathbb Z}}
\nc{\bbN}{{\mathbb N}}
\nc{\bbR}{{\mathbb R}}

\def\a{\alpha}
\def\b{\beta}
\def\g{\gamma}
\def\G{\Gamma}
\def\d{\delta}
\def\D{\Delta}
\def\e{\epsilon}

\def\L{\Lambda}

\def\Sig{\Sigma}

\def\t{\tau}


\def\to{\rightarrow}


\newcommand{\ssect}{\subsection}

\newcommand{\seq}{{\,\subseteq\,}}

\newcommand{\sreq}{{\,\supseteq\,}}

\newcommand{\sm}{{\,\setminus\,}}

\newcommand{\eset}{{\emptyset}}

\nc{\ol}{\overline}

\def\dotcup{\hskip1mm\dot{\cup}\hskip1mm}



\newcommand{\join}{\bigvee}

\newcommand{\meet}{\bigwedge}

\DMO {\Hom}{{\mathrm Hom}}

\begin{document}
\title{On the weak order of  Coxeter groups}

\author{Matthew Dyer}
\address{Department of Mathematics 
\\ 255 Hurley Building\\ University of Notre Dame \\
Notre Dame, Indiana 46556, U.S.A.}
\email{dyer.1@nd.edu}

\begin{abstract} This paper provides some  evidence for conjectural relations between extensions of (right) weak order on Coxeter groups,  closure operators on root systems, and Bruhat order. The  conjecture
focused upon here refines an earlier question as to whether the  set of initial sections of reflection orders, ordered by inclusion, forms a complete lattice.
 Meet and join in weak order are described  in terms of a suitable  closure operator.    Galois connections are defined from the  power set of $W$ to itself, under which maximal subgroups of certain  groupoids  correspond 
to certain complete meet subsemilattices of weak order. An analogue of weak order    for   standard parabolic subsets of any rank of the root system
is defined, reducing to the usual weak order in rank zero, and having some analogous properties in rank one (and conjecturally in general).  \end{abstract}

\maketitle
\section*{Introduction} Weak order is a   partial order on a Coxeter group $W$ which is of considerable importance in the basic  combinatorics of $W$.  For example, the maximal chains  in weak order from the identity element to  a given element are  in natural bijective correspondence with reduced expressions of that element. It is known  that  weak orders of Coxeter groups  are complete meet semilattices. 

  This paper gives descriptions of meet and join
(when existing) of elements in weak order in terms of a closure operator on the root system of $W$. The closure operator  used here  is the finest one (i.e. with the most closed sets)  for which, given any two roots in a closed set, any root expressible as a non-negative real linear combination of those two roots is also in that closed set, though some of  the results  hold for other closure operators as well. The paper also  introduce two Galois connections from the  power set of $W$ to itself, under which maximal subgroups of certain groupoids (generalizing those in \cite{HowBr}, and which we shall study in a series of other papers) correspond bijectively
to certain complete meet subsemilattices of weak order. It also  defines  an
analogue of weak order associated to   standard parabolic subsets of the root system (which are the unions of the positive roots with the root systems of  standard parabolic subgroups). The parabolic weak order  reduces to the standard weak order in the case of the trivial   parabolic subgroup, and it is shown to be   a complete meet semilattice
(with meet and join given by essentially the same formula in terms of the closure operator as for ordinary weak order) in the  case of a rank one parabolic subgroup.

These results have been obtained in attempting to refine  some questions and conjectures   on the structure of reflection orders of Coxeter groups and their initial sections raised in \cite[Remark 2.12]{DyHS1}  and \cite[Remark 2.14]{DyQuo}. Reflection orders and their initial sections have applications to study of  Bruhat order,  Hecke algebras and Kazhdan-Lusztig polynomials. The  original questions are recalled here, and    some of the refinements stated, since they  provide some of the principal motivations; additional related  results and conjectures  will be discussed in other papers.
The arrangement of this paper is as follows. Section \ref{S1} provides the statements of the main results. Section \ref{s2a} discusses the  conjectures.   Section \ref{S2} illustrates some of the main  results by the example of finite dihedral groups. Sections \ref{S3}--\ref{S9} are devoted to the proofs of the main results. Section \ref{S10} discusses two  other standard closure operators on root systems and the extent to which the  results are known to hold for them.

Throughout the paper, it is assumed  that the reader is familiar with basic properties of Coxeter groups and their root systems; standard  references for this background material are \cite{Bour} and \cite{Hum}.
For the basic properties of weak order, consult \cite{BjBr}. Several of the proofs  proceed by  reduction to the case of dihedral groups;  the easy  verifications of the necessary facts in the dihedral case are usually omitted.

\section{Statement of results} \lb{S1}

 \subsection{Posets and lattices}\lb{ss1.1} Fix the following standard terminology for partially  ordered sets, which are called  posets in this paper.  See for example \cite{DavPr}.  A   lattice is a non-empty poset 
  $(L,\leq)$  in which any two elements $x,y\in L$ have a least  upper bound (join) $x\vee y$ and a
 greatest lower bound (meet) $x\wedge y$. 
   A lattice $L$ is said to be complete  if  every   subset $X$ of $L$ has a  join $\join X$ and a meet $\meet X$ (this implies $L$ has a minimum element and a maximum element).

 An ortholattice is a lattice which has  a maximum element $\top$, minimum element $\bot$ and is  
 equipped with an order-reversing involution $x\mapsto x^{\perp}$ such that $x\vee x^{\perp}=\top$ and $x\wedge x^{\perp}=\bot$ for all $x$ in the lattice.  
A complete ortholattice is an ortholattice
 which is complete as lattice. 

  A complete meet semilattice is a
 poset   in which every non-empty subset $X$ has a greatest lower bound $\meet X$; in particular, a non-empty complete meet semilattice has a minimum element.
 By a complete meet subsemilattice of a complete meet semilattice $L$, we mean a subset $X$ of $L$ such for any non-empty subset $Y$ of $X$, the meet $\meet Y$ of $Y$ in $L$ is in $X$ (and is therefore the meet of $Y$ in $X$). If $X$ is non-empty, its minimum element need not coincide with that of $L$ (according to our conventions).

\ssect{Coxeter groups and root systems} \lb{ss1.2} Let $(W,S)$ be a  Coxeter system with standard length function $l=l_{W}$ and set of reflections $T=\mset{wsw^{-1}\mid w\in W,s\in S}$.  Assume without loss of generality  that $W$ is the real  reflection group associated as in \cite{Sd}   to a  based
root system $\Phi$  in a real vector  space $V$ (this allows the standard root system of \cite{Bour} and  \cite{Hum}).
Let $\Pi$ be the standard  set of simple roots corresponding to the simple reflections $S$ of $W$, and  let  $\Phi_+$  be the set of positive roots corresponding to $\Pi$.  Abbreviate $\Phi_{-}:=-\Phi_{+}$. Denote the reflection in a root $\alpha\in \Phi$ by  $s_\alpha\in T$. For  $z\in W$, let \[\Phi_{z}=\Phi_{z,W}:=\Phi_{+}\cap z(\Phi_{-})=\mset{\alpha\in \Phi_{+}\mid l(s_{\a}z)<l(z)}.\]  It is well known that  if $z=s_{\a_{1}}\ldots s_{\a_{n}}$ is any reduced expression, then  
\be\lb{eq1.2.1}
\Phi_{z}=\set{\a_{1},s_{\a_{1}}(\a_{2}),\ldots,s_{\a_{1}}\cdots s_{\a_{n-1}}(\a_{n})}, \qquad \vert \Phi_{z}\vert =l(z)=n.\ee Write $\Phi'_{z}:=\Phi_{+}\sm\Phi_{z}$.  Note that for $z\in W$,   $\Phi'_{z}$ is finite if and only if  $W$ is finite, in which case $\Phi'_{z}=\Phi_{zw_{S}}$ where $w_{S}$ is the longest element of $W$.

\ssect{Weak order} \lb{ss1.3} The weak (right) order $\leq$ on $W$ is the partial order on $W$ 
defined by $x\leq y$ if and only if  $l(y)=l(x)+l(x^{-1}y)$. It is also known that $x\leq y$ if and only if  $\Phi_{x}\seq \Phi_{y}$.
In particular, for any $x,y\in W$,  the equality  $\Phi_{x}=\Phi_{y}$ holds  if and only if  $x=y$. 

For any fixed $y\in W$, the set $\mset{x\in W\mid x\leq y}$ is finite. It is known that   $W$ in weak order is a  complete meet semilattice (see \cite[3.2]{BjBr}).

\ssect{The $2$-closure operator on a root system} \lb{ss1.4} Denote the power set of a set $X$ as $\CP(X)$. A closure operator on the set $X$ is a  function $c\colon \CP(X)\rightarrow \CP(X)$ such that 
 $c(c(A))=c(A)$ and $A\seq c(A)$ for $A\seq X$,  and  $A\seq B\seq X$ implies $c(A)\seq c(B)$. One  calls the  subsets of $X$ of the form $c(A)$ for $A\seq X$   the closed sets (of $c$). The closure $c(A)$ of $A$  is then the intersection of all closed subsets of $X$ containing $A$.

 Following \cite[Remark 2.12]{DyHS1}, introduce a closure operator on $\Phi$ as follows.
 Say that a subset $\Gamma$ of $\Phi$ is $2 $-closed  if  for any $\a,\b\in \G$
we have $\mset{a\a+b\b\mid a,b\in \bbR_{\geq 0}}\cap \Phi\subseteq \G$.
The $2 $-closed sets are the closed sets of the closure operator on $\Phi$ for which the   closure $\ol \G$   of a subset  $\G$ of $\Phi$  is  the intersection of all 
 $2 $-closed subsets of $\Phi$ which contain $\G$. Henceforward,  the $2 $-closed subsets of $\Phi$ are usually called  closed sets, unless other closure operators are under  simultaneous consideration.
 
 It is crucial for the purposes of this paper  that $\Phi_{+}$ is closed and for $x\in W$, $\Phi_{x}$ and $\Phi_{x}'$ are closed (see Lemma \ref{ss3.1}).

\begin{rem*}  The $2$-closure operator was called $\bbR $-closure in \cite{Pil}; the name $2$-closure   emphasizes its rank two nature.   See also \cite{DyRig} and Section \ref{S10} for
more  about this closure operator.  \end{rem*}

\ssect{Weak order and $2$-closure}\lb{ss1.5} The first main result   affords  a new proof that $W$ in  weak order is a  semilattice and a description of its meet and join in terms of the closure operator.

\begin{thm*} In weak order $\leq$, $W$ is a complete meet semilattice. The  join (when existing) and meet   of a non-empty subset  $X$ of $W$are given as follows:
  \begin{num} \item  The join $y:=\join X$ exists in $(W,\leq)$ if and only if  $X$ has an upper bound in $W$, in which case $\Phi_{y}=\ol{\cup_{x\in X}\Phi_{x}}$.
  \item If $y:=\meet X$  then  $ \Phi'_{y}=\ol{\cup_{x\in X} \Phi'_{x}}$.
  \end{num}\end{thm*}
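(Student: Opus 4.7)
My plan is to base the proof on the characterization that a finite subset $A \subseteq \Phi_+$ equals $\Phi_y$ for some (necessarily unique) $y \in W$ if and only if both $A$ and $\Phi_+ \setminus A$ are $2$-closed. One direction is Lemma~\ref{ss3.1}; the reverse I would prove by induction on $|A|$, by first showing that any nonempty such $A$ contains a simple root $\alpha$ (otherwise $\Pi \subseteq \Phi_+ \setminus A$, and since the $2$-closure of $\Pi$ is all of $\Phi_+$, this forces $A = \emptyset$), then applying the simple reflection $s_\alpha$ to produce a biclosed set of smaller cardinality corresponding to $s_\alpha y$.

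Granted this characterization, part (a) is handled by setting $A := \overline{\bigcup_{x \in X} \Phi_x}$ whenever $X$ is bounded above by some $z \in W$. Since $\Phi_z$ is $2$-closed and contains $\bigcup \Phi_x$, one has $A \subseteq \Phi_z$, so $A$ is finite and $2$-closed by construction. The only nontrivial step is to verify that $\Phi_+ \setminus A$ is also $2$-closed. Once this is established, the characterization yields $y \in W$ with $\Phi_y = A$, and $y$ is the sought join: it is an upper bound because $\Phi_y \supseteq \Phi_x$ for each $x \in X$, and minimal among upper bounds because any other upper bound $z'$ has $\Phi_{z'}$ a $2$-closed set containing $\bigcup \Phi_x$, hence $\Phi_{z'} \supseteq A = \Phi_y$. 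The ``only if'' direction is immediate.

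Part (b) together with the complete meet semilattice assertion I would handle symmetrically. For nonempty $X$, set $B := \overline{\bigcup_{x \in X} \Phi'_x}$ and $A := \Phi_+ \setminus B$. Fixing any $x_0 \in X$ gives $\Phi'_{x_0} \subseteq B$, hence $A \subseteq \Phi_{x_0}$, so $A$ is finite, while $B$ is $2$-closed by construction. The main step is again to show that $A$ is $2$-closed; granted this, the characterization produces $y \in W$ with $\Phi'_y = B$, and one verifies that $y = \meet X$ directly: it is a lower bound because $\Phi_y = A \subseteq \Phi_x$ for all $x$, and maximal among lower bounds since any lower bound $y'$ satisfies $\Phi'_{y'} \supseteq \bigcup \Phi'_x$ and is $2$-closed, forcing $\Phi'_{y'} \supseteq B = \Phi'_y$.

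The principal obstacle in both parts is verifying the complementary $2$-closedness. My strategy is reduction to the rank-two (dihedral) case: given a putative violation $\alpha, \beta \in \Phi_+ \setminus A$ with $\gamma = a\alpha + b\beta \in \Phi_+$ belonging to $A$, pass to the rank-two subsystem $\Psi := \Phi \cap \mathrm{span}_{\mathbb{R}}(\alpha, \beta)$, where $A \cap \Psi_+$ is $2$-closed and hence (by the dihedral case) an interval in the natural linear ordering of $\Psi_+$. One then exploits the specific origin of $A$, as a $2$-closure of a union of inversion sets (each of which meets $\Psi_+$ in an initial segment relative to the appropriate local ordering), to rule out the configuration in which $\alpha, \beta$ lie outside the interval on opposite sides while $\gamma$ lies inside, yielding the required contradiction. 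This dihedral bookkeeping, together with the verification that inversion sets pull back well along the restriction to $\Psi$, is where I expect the real technical work to be concentrated.
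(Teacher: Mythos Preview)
Your overall framework is sound --- the characterization of finite biclosed sets as the $\Phi_y$'s (Lemma~\ref{ss3.1}(d)) is indeed the right tool, and once you know $\overline{\cup_x \Phi_x}$ (resp.\ $\Phi_+ \setminus \overline{\cup_x \Phi'_x}$) is biclosed, the identification with the join (resp.\ meet) follows exactly as you describe. The difficulty is entirely concentrated in the step you flag as ``the real technical work,'' and your proposed dihedral reduction does not go through as written.

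Here is the gap. Suppose $\alpha,\beta \notin A$ and $\gamma \in A$ with $\gamma \in \bbR_{>0}\alpha + \bbR_{>0}\beta$, and let $\Psi$ be the maximal dihedral subsystem through $\alpha,\beta,\gamma$. You correctly observe that $A \cap \Psi_+$ is an interval, and that in the bad configuration neither simple root of $\Psi_+$ lies in $A$. Since each $\Phi_x \cap \Psi_+$ is biclosed in $\Psi_+$, this forces $\Phi_x \cap \Psi_+ = \emptyset$ for every $x \in X$, so $\bigl(\cup_x \Phi_x\bigr) \cap \Psi_+ = \emptyset$. But this is \emph{not} a contradiction: the root $\gamma$ can enter $A = \overline{\cup_x \Phi_x}$ via a pair of roots $\rho,\sigma$ lying in a \emph{different} maximal dihedral subsystem through $\gamma$. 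The $2$-closure does not localize to $\Psi_+$; that is, $A \cap \Psi_+$ can be strictly larger than the $\Psi_+$-closure of $\bigl(\cup_x \Phi_x\bigr) \cap \Psi_+$. Your sketch gives no mechanism for controlling such cross-subsystem contributions, and there is no obvious inductive handle on them in this setup.

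The paper sidesteps this obstacle by arguing differently in the two parts. For (a), rather than verifying biclosedness of the closure directly, it proves by induction on $l(u)-l(x)$ (where $u$ is a common upper bound and $x$ a common lower bound) that $\overline{\Phi_y \cup \Phi_z}$ already \emph{equals} some $\Phi_w$; the induction step (Proposition~\ref{ss4.2}) slides one element up via a simple reflection and uses the rank-two base case (Lemma~\ref{ss4.1}) where everything happens inside a single standard parabolic. For (b), the paper proves the more general Theorem~\ref{ss6.1} via Lemma~\ref{ss1.7}, which characterizes $\Phi_x$ and $\Phi'_x$ as closures of the Bruhat-cover label sets $\Phi_{x,\pm 1}$; the argument for that lemma uses the Bruhat graph and the identity~\eqref{eq5.4.1} relating length differences in $W$ to those in the maximal dihedral subgroups, which is what allows the genuine reduction to rank two. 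Your proposal correctly anticipates that dihedral subsystems are the key, but the reduction has to be organized so that one never needs $2$-closure to commute with restriction to $\Psi_+$.
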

  \begin{rem*} Subsequent conjectures of this paper would  imply that the join in (a) exists if and only if  
$\ol{\cup_{x\in X}\Phi_{x}}$ is a finite set, but this remains an open question.\end{rem*}
\ssect{Order isomorphisms} \lb{ss1.6} The order isomorphism in part  (a) of the following Corollary is well-known  (see \cite[Proposition 3.1.6]{BjBr}). 
Part (b), which is a simple consequence of Theorem \ref{ss1.5}, shows that the domain of this order isomorphism  is closed under formation of  those joins which exist in $W$; it  is  highlighted since it  will play a fundamental role in subsequent papers. 
\begin{cor*} Let $x\in W$. Then 
\begin{num}\item  The map $u\mapsto xu\colon W\rightarrow W$ restricts to an order isomorphism between
the subposet  (actually an order ideal)  $\mset{u\in W\mid l(xu)=l(x)+l(u)}$  and the subposet  (an order  coideal)
$\mset{z\in W\mid x\leq z}$ of $(W,\leq)$.
\item If $U\seq W$ with $l(xu)=l(x)+l(u)$ (i.e. $\Phi_{u}\cap \Phi_{x^{-1}}=\eset$) for all $u\in U$, and $y:=\join U$ exists in $W$,  
then  $l(xy)=l(x)+l(y)$ (i.e. $\Phi_{y}\cap \Phi_{x^{-1}}=\eset$).\end{num}\end{cor*}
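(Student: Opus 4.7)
The plan is to dispose of part (a) by reference---the order isomorphism $u\mapsto xu$ between the indicated ideal and coideal is the well-known \cite[Proposition 3.1.6]{BjBr}---and to derive part (b) as a short consequence of Theorem \ref{ss1.5}(a) combined with the $2$-closedness of $\Phi'_{x^{-1}}$ recorded in \S\ref{ss1.4}.

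For part (b), I take the equivalence $l(xu)=l(x)+l(u)\iff \Phi_u\cap \Phi_{x^{-1}}=\emptyset$ stated in the corollary as already understood; it follows from \eqref{eq1.2.1}, which yields $\Phi_{xu}=\Phi_x\sqcup x\Phi_u$ precisely when concatenating reduced words for $x$ and $u$ stays reduced, a condition that translates into the stated disjointness. Under this reformulation the hypothesis on $U$ becomes
\[\bigcup_{u\in U}\Phi_u\ \subseteq\ \Phi_+\setminus \Phi_{x^{-1}}\ =\ \Phi'_{x^{-1}}.\]

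Since by \S\ref{ss1.4} the set $\Phi'_{x^{-1}}$ is $2$-closed, and the $2$-closure of a set is the intersection of all $2$-closed supersets, the displayed inclusion upgrades to
\[\overline{\bigcup_{u\in U}\Phi_u}\ \subseteq\ \Phi'_{x^{-1}}.\]
Theorem \ref{ss1.5}(a), applied to the existing join $y=\bigvee U$, identifies the left-hand side with $\Phi_y$. Hence $\Phi_y\cap \Phi_{x^{-1}}=\emptyset$, equivalently $l(xy)=l(x)+l(y)$, as desired.

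No serious obstacle arises: the substantive content has been absorbed into Theorem \ref{ss1.5} and the already-noted $2$-closedness of $\Phi'_{x^{-1}}$. The real point of recording (b) is to observe that the domain of the isomorphism in (a) is stable under the formation of those joins that happen to exist in $W$, a stability feature the author flags as fundamental for the subsequent papers; the proof itself reduces to matching the closure-theoretic description of joins in weak order against the closure-theoretic reformulation of the concatenation condition $l(xu)=l(x)+l(u)$.
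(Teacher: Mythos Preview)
Your proof is correct and follows essentially the same route as the paper: part~(a) is dispatched by reference to \cite[Proposition 3.1.6]{BjBr}, and part~(b) is obtained by observing that $\Phi'_{x^{-1}}$ is a $2$-closed set containing each $\Phi_{u}$, hence containing $\ol{\cup_{u\in U}\Phi_{u}}=\Phi_{y}$ by Theorem~\ref{ss1.5}(a). The only cosmetic difference is that the paper cites Lemma~\ref{ss3.1}(f) for the equivalence $l(xu)=l(x)+l(u)\iff \Phi_{u}\cap\Phi_{x^{-1}}=\eset$, whereas you sketch it from \eqref{eq1.2.1}; either justification is adequate.
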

\ssect{Bruhat order and $2$-closure} \lb{ss1.7}  Theorem \ref{ss1.5}(a)  and the Corollary are proved   in Section \ref{S4}, after giving  some preliminaries in  Section \ref{S3}.   A more general statement (Theorem \ref{ss6.1}) than Theorem \ref{ss1.5}(b) is proved   in Section \ref{S6}, after establishing 
the  next Lemma in Section \ref{S5}. To state the Lemma, the following  notation will be used.
For $n\in \bbZ$ and $x\in W$,  define \[\Phi_{x,n}:=\mset{\alpha\in \Phi_{+}\mid l(s_{\alpha}x)=l(x)+n}.\] 
  Note that $\Phi_{x,n}=\eset $ unless $n$ is odd, and that $\Phi_{x}=\dotcup_{n<0}\Phi_{x,n}$ and $\Phi'_{x}=\dotcup_{n>0}\Phi_{x,n}$ (here and later,  the symbol ``$\dotcup$''  indicates that a union is one of disjoint sets).

  The following  result is reminiscent of the Krein-Milman theorem, but note that $2$-closure is not a ``convex'' (i.e. anti-exchange) closure operator in general (see
Section \ref{S10}).
   \begin{lem*} Let $\G\seq\Phi_{+}$ and $x\in W$.  Then
\begin{num}\item  $\ol{\G}=\Phi_{x}$ if and only if  $\Phi_{x,-1}\seq \G\seq\Phi_{x}$.\item 
 $\ol{\G}=\Phi'_{x}$ if and only if  $\Phi_{x,+1}\seq \G\seq\Phi'_{x}$.  
 \end{num}
\end{lem*}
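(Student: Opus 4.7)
The plan is to reduce both parts to a rank $2$ (dihedral) computation, exploiting that $\Phi_x$ and $\Phi'_x$ are $2$-closed by Lemma \ref{ss3.1}. The inclusions $\Gamma\seq\Phi_x$ in (a) and $\Gamma\seq\Phi'_x$ in (b) are then immediate from $\Gamma\seq\ol\Gamma$, so the substantive content is the two identities $\ol{\Phi_{x,-1}}=\Phi_x$ and $\ol{\Phi_{x,+1}}=\Phi'_x$, together with the accompanying minimality of $\Phi_{x,\pm 1}$.

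The technical ingredient is a rank $2$ reduction. Given a rank $2$ subsystem $\Psi$ of $\Phi$ with associated (dihedral) reflection subgroup $W_\Psi$, write $x = y\cdot{}^\Psi\!x$ for the standard factorization with $y\in W_\Psi$ and ${}^\Psi\!x$ of minimal length in its right $W_\Psi$-coset. One then has $l_W(s_\a x)-l_W(x) = l_{W_\Psi}(s_\a y)-l_{W_\Psi}(y)$ for every $\a\in\Psi\cap\Phi_+$, whence $\Phi_x\cap\Psi=\Phi_y$ and, stratum by stratum, $\Phi_{x,n}\cap\Psi=\Phi_{y,n}$ for each $n$ (the latter computed in $W_\Psi$); in particular $\Phi_{y,-1}\seq\Phi_{x,-1}$ and $\Phi_{y,+1}\seq\Phi_{x,+1}$. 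In the dihedral world, a direct inspection using the angular parameterization of $\Psi\cap\Phi_+$ shows that $\Phi_y$ is an ``angular interval'' whose two extreme rays coincide with $\Phi_{y,-1}$, and every interior root of the interval is a strict positive real combination of those two extremes; thus $\ol{\Phi_{y,-1}}=\Phi_y$ and no proper subset of $\Phi_{y,-1}$ has the same closure. The dual statements for $\Phi'_y$ and $\Phi_{y,+1}$ are analogous.

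For the ``$\Leftarrow$'' direction of (a), it suffices to prove $\Phi_x\seq\ol{\Phi_{x,-1}}$. Given $\a\in\Phi_x\sm\Phi_{x,-1}$, the plan is to produce two roots $\b,\g\in\Phi_x\sm\{\a\}$ with $\a\in\bbR_{>0}\b+\bbR_{>0}\g$; the rank $2$ reduction applied to the subsystem spanned by $\b,\g$ then yields $\a\in\ol{\Phi_{y,-1}}\seq\ol{\Phi_{x,-1}}$. For the ``$\Rightarrow$'' direction of (a), assume $\ol\Gamma=\Phi_x$ and suppose for contradiction that some $\a\in\Phi_{x,-1}$ is absent from $\Gamma$; since the iterative $2$-closure construction of $\ol\Gamma$ stays inside $\Phi_x$, the root $\a$ is first introduced as $a\b+b\g$ with $\b,\g\in\Phi_x$ distinct from $\a$. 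In the rank $2$ subsystem they span, $\a$ is a strict positive combination of two other roots of $\Phi_y$, hence non-extremal, so $\a\notin\Phi_{y,-1}$; but the rank $2$ reduction forces $\a\in\Phi_{y,-1}$ from $\a\in\Phi_{x,-1}$, a contradiction. Part (b) goes through by the parallel argument with $\Phi'_x$ and $\Phi_{x,+1}$ replacing $\Phi_x$ and $\Phi_{x,-1}$; in the finite case it can also be derived from (a) via the identifications $\Phi'_x=\Phi_{xw_S}$ and $\Phi_{x,+1}=\Phi_{xw_S,-1}$.

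The main obstacle I expect is the existence claim inside the ``$\Leftarrow$'' direction: given $\a\in\Phi_x\sm\Phi_{x,-1}$, producing two distinct roots of $\Phi_x$ whose open positive cone contains $\a$. The natural route is to fix a reduced expression $x=s_{i_1}\cdots s_{i_l}$ with $\a=s_{i_1}\cdots s_{i_{k-1}}(\a_{i_k})$ and exploit the failure of $s_{i_1}\cdots\widehat{s_{i_k}}\cdots s_{i_l}$ to be reduced: the exchange condition yields a second deletable index, and a rank $2$ analysis of the corresponding subword supplies a dihedral subsystem in which $\a$ lies interior to two other inversions of $x$.
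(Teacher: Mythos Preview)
Your key technical ingredient is false. You assert that for a rank two subsystem $\Psi$ with dihedral group $W_\Psi$, writing $x=y\cdot{}^\Psi x$, one has
\[
l_W(s_\alpha x)-l_W(x)=l_{W_\Psi}(s_\alpha y)-l_{W_\Psi}(y)
\]
and hence $\Phi_{x,n}\cap\Psi=\Phi_{y,n}$ stratum by stratum. Only the \emph{signs} of these length differences agree (this is what Lemma~\ref{ss5.4}(a) gives), not their magnitudes. For a concrete counterexample take $W$ of type $A_3$, $x=w_0$, $\theta$ the highest root, and $\Psi=\{\pm\theta,\pm\alpha_2\}$ (type $A_1\times A_1$). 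Then $y=s_\theta s_2$ is the longest element of $W_\Psi$ and $\theta\in\Phi_{y,-1}$ in $W_\Psi$, yet in $W$ one has $l(s_\theta w_0)=l(w_0)-5$, so $\theta\notin\Phi_{x,-1}$. Thus $\Phi_{y,-1}\not\subseteq\Phi_{x,-1}$, and your ``$\Leftarrow$'' argument collapses: even after you produce $\beta,\gamma\in\Phi_x$ with $\alpha$ in their open cone, the conclusion $\alpha\in\overline{\Phi_{y,-1}}\subseteq\overline{\Phi_{x,-1}}$ is unjustified.

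The correct relationship is the identity \eqref{eq5.4.1}:
\[
l(s_\alpha x)-l(x)-1=\sum_{W'\in\CM_\alpha}\bigl(l_{W'}((s_\alpha x)_{W'})-l_{W'}(x_{W'})-1\bigr),
\]
a sum over \emph{all} maximal dihedral subgroups through $s_\alpha$, with each summand a nonnegative even integer when $\alpha\in\Phi'_x$. This does salvage one direction of your claim: if $\alpha\in\Phi_{x,-1}$ then the analogous sum vanishes and every term is zero, so $\alpha$ is extremal in every maximal dihedral piece. That rescues your ``$\Rightarrow$'' argument (provided you work with maximal dihedral subgroups). But for ``$\Leftarrow$'' the paper proceeds differently: if $\alpha\in\Phi_{x,-(2n+3)}$, the identity forces some $W'\in\CM_\alpha$ with dihedral length drop at least $3$, and inside that $W'$ one finds $\beta,\gamma$ with $\alpha\in\bbR_{>0}\beta+\bbR_{>0}\gamma$ and $\beta,\gamma\in\bigcup_{j\leq n}\Phi_{x,-(2j+1)}$. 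This gives an induction on the stratum index, filling in $\Phi_x$ one layer at a time from $\Phi_{x,-1}$. Your reduced-expression idea in the last paragraph might eventually reach a similar induction, but as stated the proof is missing precisely this mechanism for controlling which stratum $\beta$ and $\gamma$ land in.
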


\ssect{Closure after adjoining a root }\lb{ss1.8} 

In Section \ref{S7}, the  following is proved  using the above Lemma and Theorem \ref{ss1.5}.
\begin{thm*} Let $x\in W$ and $\a\in \Phi_{+}$.
\begin{num} \item Suppose that $l(s_{\a}x)=l(x)+1$ and also that   there is some 
$v\in W$ with $\Phi_{x}\cup\set{\a}\seq \Phi_{v}$.
Then there is a minimum element $y=x\vee s_{\a}x$ of the set  $\mset{z\in W\mid x\leq z, {\a}\in \Phi_{z}}$.
One has  $\ol{\Phi_{x}\cup\set{\a}}=\Phi_{y}$
and  $\Phi_{s_{\a}y}=\Phi_{y}\sm\set{\a}=\ol{(\Phi_{x}\cup\Phi_{s_{\a}x})\sm\set{\a}}$. 
In particular, $y>s_{\a}y=ys_{\t}$ for some $\t\in \Pi$. 
\item Suppose that $l(s_{\a}x)=l(x)-1$. Then there is a maximum element $y=x\wedge s_{\a}x$ in
$\mset{z\in W\mid z\leq x, \a\not\in \Phi_{z}}$. One has 
$\Phi'_{y}=\ol{\Phi'_{x}\cup\set{\a}}$ and $\Phi'_{s_{\a}y}=\Phi'_{y}\sm\set{\a}=
\ol{(\Phi'_{x}\cup\Phi'_{s_{\a}x})\sm\set{\a}}$. In particular, $y<s_{\a}y=ys_{\t}$ for some $\t\in \Pi$.\end{num}
  \end{thm*}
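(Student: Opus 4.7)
\emph{Plan.} The proof of (a) combines Theorem~\ref{ss1.5}(a) with Lemma~\ref{ss1.7}(a), after verifying that the given $v$ is a common upper bound of $\set{x,s_\alpha x}$ in weak order; part (b) is proved dually via Theorem~\ref{ss1.5}(b) and Lemma~\ref{ss1.7}(b).

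For (a): since $l(s_\alpha x)>l(x)$ forces $\alpha\notin\Phi_x$, the reduced-expression formula \eqref{eq1.2.1} applied to a reduced word for $s_\alpha x$ yields $\Phi_{s_\alpha x}=\set{\alpha}\cup s_\alpha(\Phi_x)$. To conclude $v\geq s_\alpha x$ it suffices to verify $s_\alpha(\beta)\in\Phi_v$ for every $\beta\in\Phi_x$. If $\pair{\beta,\alpha^\vee}\leq 0$, then $s_\alpha(\beta)=\beta-\pair{\beta,\alpha^\vee}\alpha$ is a non-negative combination of $\alpha,\beta\in\Phi_v$, so 2-closedness of $\Phi_v$ (Lemma~\ref{ss3.1}) gives it at once. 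If $\pair{\beta,\alpha^\vee}>0$, I restrict to the rank-two reflection subgroup $W'=\pair{s_\alpha,s_\beta}$ with root subsystem $\Phi'=\Phi\cap(\bbR\alpha+\bbR\beta)$: the intersections $\Phi_v\cap\Phi'$ and $\Phi_x\cap\Phi'$ are inversion sets of $W'$, and a classification of dihedral inversion sets containing $\beta$ but not $\alpha$ forces $\Phi_x\cap\Phi'$ to extend from $\beta$ all the way to the opposite simple root of $\Phi'$; consequently $\Phi_v\cap\Phi'$, being an inversion set of $W'$ containing both $\alpha$ and this extended set, must exhaust the full positive subsystem of $\Phi'$ and in particular contains $s_\alpha(\beta)$. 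This dihedral analysis is the main technical obstacle (in the infinite dihedral case one checks that the acute-angle configuration is ruled out entirely by the inversion-set constraint, so only the finite dihedral case really needs to be handled in detail).

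With $v$ identified as a common upper bound, Theorem~\ref{ss1.5}(a) produces $y:=x\vee s_\alpha x$ with $\Phi_y=\ol{\Phi_x\cup\Phi_{s_\alpha x}}$. The identity $\ol{\Phi_x\cup\Phi_{s_\alpha x}}=\ol{\Phi_x\cup\set{\alpha}}$ is immediate in one direction and reduces in the other to proving $s_\alpha(\beta)\in\ol{\Phi_x\cup\set{\alpha}}$ for $\beta\in\Phi_x$, handled by the same case analysis (now applied inside the 2-closure). Minimality of $y$ in $\mset{z\mid x\leq z,\,\alpha\in\Phi_z}$ is automatic: any such $z$ has $\Phi_z$ 2-closed and containing $\Phi_x\cup\set{\alpha}$, hence $\Phi_y\seq\Phi_z$, giving $y\leq z$. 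For the cover assertions, Lemma~\ref{ss1.7}(a) gives $\Phi_{y,-1}\seq\Phi_x\cup\set{\alpha}$; I exclude $\Phi_{y,-1}\seq\Phi_x$ by noting that the same lemma would then yield $\Phi_y=\ol{\Phi_{y,-1}}\seq\ol{\Phi_x}=\Phi_x$ (using 2-closedness of $\Phi_x$), forcing $y\leq x$ and contradicting $\alpha\in\Phi_y\sm\Phi_x$. So $\alpha\in\Phi_{y,-1}$, giving $l(s_\alpha y)=l(y)-1$, $\Phi_{s_\alpha y}=\Phi_y\sm\set{\alpha}$, and $s_\alpha y=ys_\tau$ for some $\tau\in\Pi$. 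The closure identity $\Phi_{s_\alpha y}=\ol{(\Phi_x\cup\Phi_{s_\alpha x})\sm\set{\alpha}}$ follows from Lemma~\ref{ss1.7}(a) applied to $s_\alpha y$, using $(\Phi_x\cup\Phi_{s_\alpha x})\sm\set{\alpha}=\Phi_x\cup s_\alpha(\Phi_x)\seq\Phi_{s_\alpha y}$ and checking $\Phi_{s_\alpha y,-1}\seq\Phi_x\cup s_\alpha(\Phi_x)$ by tracking how the right descents of $y=(s_\alpha y)s_\tau$ restrict to those of $s_\alpha y$.

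Part (b) follows by the symmetric argument on $\Phi'$-sets, with Theorem~\ref{ss1.5}(b), Lemma~\ref{ss1.7}(b), and $y:=x\wedge s_\alpha x$ replacing their (a)-counterparts. No auxiliary hypothesis is required: $l(s_\alpha x)=l(x)-1$ forces $\alpha\in\Phi_x$, whence $\Phi_{s_\alpha x}=\Phi_x\sm\set{\alpha}\seq\Phi_x$ and $s_\alpha x\leq x$ supplies a common lower bound automatically. The dihedral reduction needed to produce $s_\alpha y=ys_\tau$ is the same as in (a), transcribed to the dual setting.
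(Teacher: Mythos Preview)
Your overall architecture (Theorem \ref{ss1.5} plus Lemma \ref{ss1.7}) matches the paper's, but two steps are not sound as written.

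\textbf{Part (b): a false identity.} You assert that $l(s_{\a}x)=l(x)-1$ gives $\Phi_{s_{\a}x}=\Phi_{x}\sm\set{\a}$ and hence $s_{\a}x\leq x$. This is false in general. In type $A_{2}$ with simple roots $\a_{1},\a_{2}$, take $x=s_{1}s_{2}$ and $\a=\a_{1}$: then $s_{\a}x=s_{2}$, $l(s_{2})=l(x)-1$, but $\Phi_{s_{2}}=\set{\a_{2}}$ while $\Phi_{x}\sm\set{\a_{1}}=\set{\a_{1}+\a_{2}}$, so $s_{\a}x\not\leq x$ in weak order. The equality $\Phi_{s_{\a}x}=\Phi_{x}\sm\set{\a}$ holds only when $s_{\a}x=xs_{\t}$ for a simple $\t$, which is part of the \emph{conclusion}, not a hypothesis. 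The paper instead proves (b) by the genuine dual of its (a)-argument: one shows $\ol{\Phi'_{x}\cup\set{\a}}=\ol{\Phi'_{x}\cup\Phi'_{s_{\a}x}}$ by reduction to maximal dihedral subgroups (using identity \eqref{eq5.4.1} to get $l_{W'}((s_{\a}x)_{W'})=l_{W'}(x_{W'})-1$ for every $W'\in\CM_{\a}$, then the dihedral Lemma~\ref{ss:trivdihed}), and only afterwards deduces from Theorem~\ref{ss1.5}(b) that $\Phi'_{y}=\ol{\Phi'_{x}\cup\set{\a}}$ for $y=x\wedge s_{\a}x$.

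\textbf{Part (a): the last closure identity.} The inclusion $\Phi_{s_{\a}y,-1}\seq(\Phi_{x}\cup\Phi_{s_{\a}x})\sm\set{\a}$ is the substantive step, and your phrase ``tracking how the right descents of $y$ restrict to those of $s_{\a}y$'' does not address it: $\Phi_{s_{\a}y,-1}$ records Bruhat cocovers under \emph{left} multiplication by reflections, not right descents. The paper establishes this inclusion by a genuine Bruhat-graph argument: for $\b\in\Phi_{s_{\a}y,-1}$ one sets $z=s_{\b}s_{\a}y$, splits into cases according to whether $l(zs_{\t})>l(z)$ or $l(zs_{\t})<l(z)$ (where $s_{\a}y=ys_{\t}$), uses the Z-property and the structure of length-two Bruhat intervals to draw the relevant edge diagrams, and in the harder case passes to the maximal dihedral $W'\in\CM$ containing $s_{\a},s_{\b}$ and invokes Lemma~\ref{ss:trivdihed}(a)(iii),(b) to pin down $x_{W'}=s_{\a}y_{W'}$. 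Your sketch contains none of this mechanism; something of comparable depth is needed.

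A smaller point: your acute/obtuse case split on $\mpair{\b,\a^{\vee}}$ is a detour. The cleaner route (the paper's) is to observe from \eqref{eq5.4.1} that $l(s_{\a}x)=l(x)+1$ forces $l_{W'}(s_{\a}x_{W'})=l_{W'}(x_{W'})+1$ for \emph{every} $W'\in\CM_{\a}$, so Lemma~\ref{ss:trivdihed} directly gives $\ol{\Phi_{x,W'}\cup\set{\a}}=\ol{\Phi_{x,W'}\cup\Phi_{s_{\a}x,W'}}$, whence $\ol{\Phi_{x}\cup\set{\a}}=\ol{\Phi_{x}\cup\Phi_{s_{\a}x}}$; the upper bound for $s_{\a}x$ then drops out from $\ol{\Phi_{x}\cup\set{\a}}\seq\Phi_{v}$ without any angle analysis.
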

\begin{rem*}  The minimum and  maximum elements    are  taken with respect to weak order.
Note that the hypotheses of (a) hold for any $x\in W$ and $\a\in\Pi \sm \Phi_{x}$ for which $x\vee s_{\a}$ exists; then $y=x\vee s_{\a}$. The hypotheses of (b) hold for any
$x\in W$ and $\a\in \Pi\cap \Phi_{x}$.    \end{rem*}

\subsection{Galois connection between subgroups and  subsemilattices} \lb{ss1.9} Define a relation $R$ on $W$ by $xRz$ if and only if  $z(\Phi_{x})=\Phi_{x}$  for $x,z\in W$.  As would any relation on $W$,  $R$ defines a Galois connection (see for instance \cite{MacL} and \cite{DavPr} for general background on Galois connections)  from $\CP(W)$ to itself as follows. Consider the  two maps $X\mapsto X^{\dag}$ and
$Z\mapsto Z^{*}$ from $\CP(W)\rightarrow\CP(W)$ defined by 
\bee X^{\dag}:=\mset{z\in W\mid xRz \text{ for all $x\in X$}}, \qquad Z^{*}:=\mset{x\in W\mid xRz\text{ for all $z\in Z$}}.\eee
Ordering $\CP(W)$ by inclusion, the maps are order-reversing and satisfy
$Z\seq X^{\dag}$ if and only if  $X\seq Z^{*}$ i.e. they give a Galois connection. 
As with any Galois connection, there are associated families  of stable sets (often called closed sets) for the composite maps:
\[\CW_{*}:=\mset{\CL\in \CP(W)\mid \CL^{\dag *}=\CL}=\mset{Z^{*}\mid Z\in \CP(W)}\]
and \[\CW_{\dag}:=\mset{\CG\in \CP(W)\mid \CG^{*\dag }=\CG}=\mset{X^{\dag}\mid X\in \CP(W)}.\] Well known properties of Galois connections imply that the  restriction of $*$ to a map $\CW_{\dag}\rightarrow \CW_{*}$ is a bijection
with inverse given by the restriction of $\dag$ to a map $\CW_{*}\rightarrow \CW_{\dag}$.
Also,   $\CW_{\dag}$ and $\CW_{*}$ are  complete lattices, dual under the above bijection, 
with meet given by intersection of subsets  of $W$.  In Section \ref{S8},  the following is proved 
and   analogous facts are   given  for the Galois connection determined similarly by the relation $R'$ on $W$ with
$xR'z$ if and only if  $z(\Phi'_{x})=\Phi'_{x}$. \begin{thm*} \begin{num}\item One has \bee xRz\iff (x\vee z=zx \text{ and $\Phi_{x}\cap \Phi_{z}=\eset$)}\iff \Phi_{zx}=\Phi_{z}\dotcup \Phi_{x}\eee
\item If $xRz$ then $xRz^{-1}$ and  $l(zx)=l(z)+l(x)$.
\item The elements of $\CW_{\dag}$ are subgroups of $W$.
\item The elements of $\CW_{*}$ are complete meet subsemilattices of $(W,\leq)$ with $1_{W}$ as minimum element.
\item If $L\in \CW_{*}$, then  for any  subset $X$ of $L$
which has an upper bound in $W$, its join $x:=\join X$ in $W$ is an element of $L$ (and so $x$ is the least upper bound of $X$ in $L$).
 \end{num}\end{thm*}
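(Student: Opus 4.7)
The plan is to prove (a)--(e) in order, with (a) a direct root-system computation, (b) and (c) formal consequences, (e) using that $2$-closure commutes with the $W$-action, and (d) the main step (via Corollary \ref{ss1.6}(a)). For (a), I would first prove the outer equivalence $xRz\iff \Phi_{zx}=\Phi_z\dotcup\Phi_x$ by computing $\Phi_{zx}$ directly. For $\a\in\Phi_+$, the condition $\a\in\Phi_{zx}$ is $x^{-1}(z^{-1}\a)<0$; splitting on the sign of $z^{-1}\a$ gives
\bee \Phi_{zx}=\mset{z\b\mid \b\in\Phi_x,\ z\b>0}\cup\mset{\a\in\Phi_z\mid -z^{-1}\a\notin\Phi_x}.\eee
If $z\Phi_x=\Phi_x$, then $\Phi_x=z\Phi_x\seq z\Phi_+\cap\Phi_+=\Phi_+\sm\Phi_z$ forces $\Phi_x\cap\Phi_z=\eset$; the first set then equals $\Phi_x$, and the second equals $\Phi_z$ (since $-z^{-1}\a\in\Phi_x$ for $\a\in\Phi_z$ would force $-\a\in z\Phi_x=\Phi_x\seq\Phi_+$, impossible). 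Conversely, $\Phi_{zx}=\Phi_z\dotcup\Phi_x$ makes the first set have $|\Phi_x|$ elements contained in $\Phi_x$, giving $z\Phi_x=\Phi_x$ by finiteness. The middle equivalence then follows from Theorem \ref{ss1.5}(a) combined with the $2$-closedness of $\Phi_{zx}$: under $xRz$, $\ol{\Phi_x\cup\Phi_z}\seq\Phi_{zx}=\Phi_x\dotcup\Phi_z\seq\ol{\Phi_x\cup\Phi_z}$, so $\Phi_{x\vee z}=\ol{\Phi_x\cup\Phi_z}=\Phi_{zx}$ and $x\vee z=zx$; conversely, $\Phi_{x\vee z}=\ol{\Phi_x\cup\Phi_z}\sreq\Phi_x\dotcup\Phi_z$ together with $l(x\vee z)\leq l(x)+l(z)$ forces equality.

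Parts (b), (c), (e) are then short. For (b), $l(zx)=|\Phi_z\dotcup\Phi_x|=l(z)+l(x)$, and $z^{-1}\Phi_x=\Phi_x$ by bijectivity of $z$ on the finite set $\Phi_x$. For (c), $X^\dag=\mset{z\in W\mid z\Phi_x=\Phi_x\text{ for all }x\in X}$ is closed under products and, by (b), under inverses, and contains $1_W$. For (e), any $w\in W$ is a linear bijection of $\Phi$, so $w(\ol A)=\ol{w(A)}$ for $A\seq\Phi$; then Theorem \ref{ss1.5}(a) gives $\Phi_{\join X}=\ol{\cup_{x\in X}\Phi_x}$, and for $z\in Z$, $z\Phi_{\join X}=\ol{\cup_x z\Phi_x}=\ol{\cup_x\Phi_x}=\Phi_{\join X}$, so $\join X\in Z^*$.

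The heart of the proof is (d). The minimum $1_W$ lies in $Z^*$ trivially. For closure under non-empty meets, fix a non-empty $X\seq Z^*$, set $y:=\meet X$, and fix $z\in Z$; I want $z\Phi_y=\Phi_y$. By (b), each $x\in X$ satisfies $l(zx)=l(z)+l(x)$, so $x$ lies in the order ideal
\bee I_z:=\mset{u\in W\mid l(zu)=l(z)+l(u)}=\mset{u\in W\mid \Phi_u\cap\Phi_{z^{-1}}=\eset},\eee
which is downward-closed in weak order. Hence $y\in I_z$, and the standard consequence of $l(zy)=l(z)+l(y)$ gives $\Phi_{zy}=\Phi_z\dotcup z\Phi_y$. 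By the order isomorphism $u\mapsto zu\colon I_z\to\mset{z'\in W\mid z\leq z'}$ of Corollary \ref{ss1.6}(a), $zy$ equals $\meet_{x\in X}(zx)$ in the coideal $\mset{z'\mid z\leq z'}$, which agrees with the meet in $W$ because $z\leq zx$ for each $x$ (by (a), $\Phi_z\seq\Phi_{zx}$). The key monotonicity is that (a) also gives $x\leq zx$, so $y\leq x\leq zx$ for each $x$, making $y$ a lower bound of $\mset{zx\mid x\in X}$ in $W$; therefore $y\leq\meet_{x\in X}zx=zy$. Combining $\Phi_y\seq\Phi_{zy}=\Phi_z\dotcup z\Phi_y$ with $\Phi_y\cap\Phi_z=\eset$ (from $\Phi_y\seq\cap_{x\in X}\Phi_x$ and each $\Phi_x\cap\Phi_z=\eset$ by (a)) yields $\Phi_y\seq z\Phi_y$, hence $\Phi_y=z\Phi_y$ by finiteness.

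The main obstacle is producing $y\leq zy$ in (d) without circularity: a naive approach via Theorem \ref{ss1.5}(b), trying to infer $z\Phi'_y=\Phi'_y$ directly from $\Phi'_y=\ol{\cup_x\Phi'_x}$, stalls because $z$ need not preserve the individual $\Phi'_x$, so the closure is not visibly $z$-invariant. The detour through the ideal $I_z$ and the order isomorphism of Corollary \ref{ss1.6}(a) converts the problem into a lower-bound question, in which the elementary chain $y\leq x\leq zx$ supplies what is needed.
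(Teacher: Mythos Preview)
Your proof is correct.  The skeleton is the same as the paper's---part (d) is the crux and both arguments route it through the order isomorphism of Corollary~\ref{ss1.6}(a) to identify $zy=\meet_{x\in X}zx$---but you finish (d) and (e) differently, and in both cases more directly.  For (d), the paper shows $z\vee y\leq \meet_{x}(z\vee x)=\meet_{x}zx=zy$ and then forces equality by the length count $l(z\vee y)\geq l(z)+l(y)=l(zy)$, concluding via the middle characterization in (a); you instead observe $y\leq x\leq zx$ to get $y\leq zy$, and then read off $\Phi_{y}\seq z\Phi_{y}$ from $\Phi_{zy}=\Phi_{z}\dotcup z\Phi_{y}$ and $\Phi_{y}\cap\Phi_{z}=\eset$, which is a clean shortcut.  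For (e), the paper again passes through the order isomorphism (and Corollary~\ref{ss1.6}(b)) to obtain $z\vee b=zb$, whereas your one-line argument via $z(\ol{A})=\ol{z(A)}$ and Theorem~\ref{ss1.5}(a) verifies $z\Phi_{\join X}=\Phi_{\join X}$ directly; this is genuinely simpler and bypasses Corollary~\ref{ss1.6}(b) entirely.  Your treatment of (a) is more computational than the paper's (which leans on Lemma~\ref{ss3.1}(e)--(f)), but the content is the same.
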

 \subsection{Groupoids associated to the Galois connections} \lb{ss1.10} Adopt  here the point of view  that a groupoid is a category, with a set of objects, in which every morphism is an isomorphism. 
   Each element of $\CW_{\dag}$   is obviously a maximal subgroup of 
 (i.e. the automorphism group of  an object of) a groupoid
 with objects $I$-indexed  families $X=(x_{i})_{i\in I}$ of elements  of $W$, for a suitable index set $I$,   morphisms \[\Hom(X,Y)\cong\mset{z\in W\mid z(\Phi_{x_{i}})=\Phi_{y_{i}} \text{ for all $i\in I$}}\]  and composition induced naturally by multiplication in $W$. Similarly for   $R'$. \begin{rem*}
 In the case of the relation $R$, the full subgroupoids  with objects the indexed families  of elements of  $S$ (more precisely, their  variants using subsets of $S$ instead of indexed subfamilies)  were studied in \cite{HowBr}. The  groupoids  defined above will be further generalized and studied in a series of   subsequent   papers.
  \end{rem*}
 
 \subsection{Coclosed and biclosed subsets of roots} \lb{ss:term}
   For any closed  subset $\L$ of $\Phi$, say that a subset  $\G$ of $\L$
  is coclosed  in $\L$ if $\L\sm\G$ is closed. Say that $\G\seq \L$ is biclosed in $\L$ if 
  $\G$ is  closed in $\L$ and $\L\sm \G$ is closed in $\L$. Let $\CB(\L)$ denote the set of biclosed subsets of $\L$. Order $\CB(\L)$  by inclusion. Note that $\CB(\L)$ is a complete poset, in the sense that it has a minimum element $\eset$ and the union of any directed family of elements of $\CB(\L)$ is in $\CB(\L)$. For any closure operator $a$ on $\Phi$, define notions of $a$-coclosed sets, $a$-biclosed sets etc in a similar manner as for $2$-closure.
  
  For example, the finite biclosed subsets of $\Phi_{+}$ are the sets $\Phi_{x}$ for $x\in W$,
by Lemma \ref{ss3.1}. Since this paper deals mostly with subsets of $\Phi_{+}$,  the terminology  will be  slightly abbreviated in that case: by a  ``coclosed set'' (resp., ``biclosed set'') is meant a set which  is a  coclosed subset of $\Phi_{+}$  (resp., a biclosed subset of $\Phi_{+}$).

\subsection{Parabolic weak orders}Standard parabolic subsets of $\Phi$ are now defined, following the usual definition for Weyl groups \cite[Ch VI, \S 1, Prop 20]{Bour}. In the general  context of this paper, the analogues of the conditions of loc. cit are no longer all equivalent
(e.g. for an infinite dihedral group); the definition here of standard parabolic subset is modeled  on condition (iii) of loc. cit. and the more general definition of quasiparabolic subset in \ref{ss1.14} is based on the condition (i) there.

 For $J\seq S$, there is  a standard parabolic subgroup $W_{J}:=\mpair{J}$ generated by $J$,  and its root system
$\Phi_{J}=\mset{\a\in \Phi\mid s_{\a}\in W_{J}}$. Set $\L:=\L_{J}:=\Phi_{+}\cup\Phi_{J}$. Call $\L_{J}$ the standard parabolic subset of $\Phi$ (of rank $\vert J\vert$) associated to $J$ (or associated  to  $\Phi_{J}$). Let $\CL=\CL_{J}$ denote the set of all finite biclosed  subsets   $\G$ of $\L_{J}$. Order  $\CL_{J}$ by inclusion, and call the resulting poset  the parabolic weak order  associated to $J$. Note that $\CL_{\eset}=\mset{\Phi_{x}\mid x\in W}$ naturally identifies with $W$ in weak  order via $x\leftrightarrow\Phi_{x}$. Define $\tau\colon \CP(\L_{J})\rightarrow \CP(\Phi_{J}) $ by $ \tau(\G):=\G\cap \Phi_{J}$.

\subsection{$2$-closure and  rank one parabolic weak order} Using the previous
\lb{ss1.11}  results, the following fact is  proved in Section \ref{S9}.\begin{thm*} Suppose above that $\vert J\vert =1$
Then $\CL_{J}$ is a complete meet semilattice of subsets of $\L:=\L_{J}$. Further,  \begin{num}
\item If $Y\seq \CL_{J}$ has an upper bound in  $\CL_{J}$, then it has a join $\D:=\join Y$ in $\CL_{J}$ given by $\D=\ol{\cup_{\G\in Y}\G}$.
\item
The meet  $\D$  of a subset $Y$ of $\CL_{J}$ is given by $\L\sm \D= \ol{\cup_{\G\in Y}(\L\sm \G)}$.
\item  The map  $\G\mapsto \tau(\G)$ 
 maps $\CL$ into the lattice   of finite biclosed subsets of  $\Phi_{J}$ (with biclosed sets ordered by inclusion), preserving meets and those joins which exist. 
 Moreover, $\t (\ol \G)=\ol{\tau(\G)}$ for any $\G\seq \L$.
\end{num}\end{thm*}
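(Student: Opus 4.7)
The plan is to reduce the rank-one parabolic case to the ordinary weak order via Theorem \ref{ss1.5}, by exploiting two structural features of $\L := \L_{J} = \Phi_{+} \cup \set{-\a}$, where $J = \set{s}$ and $\a$ is the simple root of $s$. First, $\L$ is itself $2$-closed: any root obtained as a non-negative real combination of $-\a$ with a positive root $\b$ is either in $\Phi_{+}$ or equal to $-\a$, as one sees by expanding $\b$ in the basis of simple roots and inspecting the coefficient of $\a$. Second, and more crucially, one has the key closure lemma that for any $\G \seq \L$, $\ol{\G} \cap \Phi_{J} = \G \cap \Phi_{J}$, so that $2$-closure never introduces new elements of $\Phi_{J} = \set{\a, -\a}$. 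This follows by observing that any non-trivial relation $a r_{1} + b r_{2} = \pm \a$ with $r_{1}, r_{2} \in \L$ and $a, b \geq 0$ forces $\pm \a$ to already appear among $\set{r_{1}, r_{2}}$.

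For part (a), given $Y \seq \CL_{J}$ with an upper bound $\G^{*} \in \CL_{J}$, set $\D := \ol{\cup_{\G \in Y} \G}$. Since $\G^{*}$ is closed and contains every $\G \in Y$, one has $\D \seq \G^{*}$, so $\D$ is finite, and $\D$ is closed by construction. The remaining task is to show $\L \sm \D$ is closed. I would proceed by a case analysis on $r_{1}, r_{2} \in \L \sm \D$ used to form a root $c = a r_{1} + b r_{2} \in \Phi$: when both $r_{i} \in \L \sm \G^{*}$, biclosedness of $\G^{*}$ gives $c \in \L \sm \G^{*} \seq \L \sm \D$; in the remaining cases, one transfers the question to the positive-root part by setting $\Phi_{x_{\G}} := \G \cap \Phi_{+}$ (each such set being biclosed in $\Phi_{+}$ and hence an inversion set) and applying Theorem \ref{ss1.5}(a) to the family $(x_{\G})_{\G \in Y}$, whose join in $(W,\leq)$ exists because it is bounded above by the element of $W$ corresponding to $\G^{*} \cap \Phi_{+}$; this identifies $\D \cap \Phi_{+}$ as a single $\Phi_{y^{*}}$, and the closure lemma determines $\D \cap \Phi_{J}$, so that $c \notin \D$ follows. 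Part (b) is obtained dually: for $Y \seq \CL_{J}$, set $\D := \L \sm \ol{\cup_{\G \in Y}(\L \sm \G)}$; the closedness of $\L$ places the closure on the right inside $\L$, and one verifies $\D \in \CL_{J}$ is the greatest lower bound of $Y$ by a parallel analysis, using Theorem \ref{ss1.5}(b) on the complements intersected with $\Phi_{+}$.

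For part (c), the identity $\tau(\ol{\G}) = \ol{\tau(\G)}$ is immediate from the closure lemma: since $\Phi_{J}$ has only two elements, every subset of $\Phi_{J}$ is trivially $2$-closed, so $\ol{\tau(\G)} = \tau(\G)$, and the lemma rewrites as $\tau(\ol{\G}) = \tau(\G) = \ol{\tau(\G)}$. Combining this with the formulas from (a), (b) and the De Morgan identity $\cup_{\G \in Y}(\L \sm \G) = \L \sm \cap_{\G \in Y}\G$ yields $\tau(\join Y) = \cup_{\G \in Y} \tau(\G)$ and $\tau(\meet Y) = \cap_{\G \in Y}\tau(\G)$, which are respectively the join and meet in the four-element Boolean lattice of finite biclosed subsets of $\Phi_{J}$. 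The main obstacle throughout is the closedness step in (a) — showing $\L \sm \D$ remains closed after taking a $2$-closure of a union — and its dual in (b); new elements can enter $\D$ via combinations involving $-\a$ that do not arise when one works purely within $\Phi_{+}$, and it is the reduction to Theorem \ref{ss1.5} on the positive-root part coupled with the closure lemma on the $\Phi_{J}$-part that ultimately controls these contributions.
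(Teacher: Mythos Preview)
Your reduction to Theorem \ref{ss1.5} has a genuine gap: the claim that $\D \cap \Phi_{+}$ equals $\Phi_{y^{*}}$ with $y^{*} = \bigvee_{\G \in Y} x_{\G}$ is false in general. The problem is exactly the one you flag at the end but do not resolve: when $-\a$ lies in $\cup_{\G \in Y}\G$, taking $2$-closure in $\L$ can manufacture new positive roots from combinations involving $-\a$, so that $\ol{\cup_{\G}\G} \cap \Phi_{+}$ may strictly contain $\ol{\cup_{\G}(\G \cap \Phi_{+})}$. Concretely, in type $A_{2}$ with $\Pi=\set{\a,\g}$ and $\b=\a+\g$, take $Y=\set{\set{-\a},\set{\a,\b}}$; both lie in $\CL_{\set{s_{\a}}}$ with upper bound $\L$. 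Then $x_{\set{-\a}}=1_{W}$ and $x_{\set{\a,\b}}=s_{\a}s_{\g}$, so $\Phi_{y^{*}}=\set{\a,\b}$, yet $\D=\ol{\set{-\a,\a,\b}}=\L$ (since $\g=\b+(-\a)$) and hence $\D\cap\Phi_{+}=\set{\a,\b,\g}$. The same defect arises dually in your treatment of (b). Your closure lemma controls only the $\Phi_{J}$-part of $\D$; it says nothing about positive roots produced via $-\a$.

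The paper's proof handles precisely this difficulty by a case analysis on the type $\tau(\G)\seq\set{\a,-\a}$. After classifying the elements of $\CL_{J}$ by type (Lemma \ref{ss9.3}), it first establishes $\G\vee\set{\a}=\ol{\G\cup\set{\a}}$ directly, uses this (and the $s$-symmetry) to reduce the binary join in (a) to the case $\tau(\G)=\tau(\D)$, and then treats each of the four equal-type cases separately. The essential extra ingredients are Theorem \ref{ss1.8} (via Lemma \ref{ss9.1}(d), to control $s\vee x$) and Theorem \ref{ss1.9} (via Lemma \ref{ss9.1}(c), guaranteeing that $\set{s}^{*}$ is closed under the joins that exist), neither of which your sketch invokes. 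Your argument for (c) is correct and matches the paper.
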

\begin{rem*} The analogue of the above Theorem with $J=\eset$  is essentially equivalent to 
Theorem  \ref{ss1.5}. The analogous statement could be conjectured to hold for any $J$, but then one may have $\CL_{J}=\set{\eset}$
(e.g for $J=S$ and $W$ infinite dihedral) and the conjecture in that form is  chiefly of interest for finite $W$ (for which it is open if $\vert J\vert >1$). A more general conjecture without this difficulty is formulated in Section \ref{s2a}.\end{rem*} 
\section{Conjectures}\lb{s2a}
The results of this paper  have been obtained in investigating an extensive set   of  questions and conjectures 
involving generalizations of basic combinatorics of Coxeter groups. In this section, some  of the conjectures most closely related to the contents of this paper, and not requiring much additional background, are stated.

The conjectures originated in studying applications of reflection orders, the original motivation for the definition of which  was to extend symmetry amongst 
  structure constants of Iwahori-Hecke algebras from the case of finite Coxeter groups
  to general Coxeter groups; this required a substitute  for the reduced expressions of the longest element, which was provided by the reflection orders.

\ssect{Reflection orders and initial sections} 
 A reflection order of $\Phi_{+}$ is defined as 
 a total order $\preceq$ of $\Phi_{+}$ such that for $\a,\b,\g\in \Phi_{+}$ with 
 $\a\prec\g$ and $\b\in \bbR_{>0}\a+\bbR_{> 0}\g$, we have $\a\prec \b\prec \g$. See \cite{BjBr} for a discussion of them and some applications.
 Under transport of structure from $\Phi_{+}$ to $T$ using the natural bijection $\a\mapsto s_{\a}$, reflection orders of $\Phi_{+}$ correspond   to the  reflection  orders of $T$ in the sense of \cite{DyHS1}
 (which are combinatorial, in that they may be defined purely in terms of $(W,S)$).
  
 Abbreviate the set of biclosed subsets of $\Phi_{+}$ as  $\CB:=\CB(\Phi_{+})$.  Define an admissible order of $\G\in \CB$ to be  a total order $\preceq$ of $\G$ all of the initial sections of which are  in $\CB$ (where  for any totally ordered set $P$,  an initial section  of $P$ is by definition  an order ideal i.e. a  subset $I$ of $P$ such that $x\leq y$ for all $x\in I$ and $y\in P\sm I$.).
 
 By straightforward reduction to the case of dihedral groups, it follows  that a  total order $\preceq$ of $\Phi_{+}$ is an admissible order of $\Phi_{+}$ if and only if  it is a reflection order of $\Phi_{+}$.
 Let $\CA$ denote the set of all subsets $\G$  of $\Phi_{+}$  for which there exists some reflection order $\preceq$ of $\Phi_{+}$ with $\G$ as an initial section. It is easily checked  from the definitions  that $\CA\seq \CB$. Attached to each element of $\CA$, there is  a ``twisted Bruhat order'' of $\CA$ as in \cite{DyHS2}; the definition of these orders may be extended  to the elements of $\CB$ (\cite{Edg}) but the more general orders are 
 not known to have such strong properties as those from elements of $\CA$.  On the other hand, $\CB$ has many useful properties not obviously shared by  $\CA$; for example, $\CB$ is closed under arbitrary directed unions, but this is not known for $\CA$.
 
\subsection{Reflection orders and maximal chains of biclosed sets} \lb{ssinsect} As will be  explained in \ref{basconj}, the following conjecture is naturally  suggested by naive analogy with the most basic facts about combinatorics of Coxeter groups.  
 
\begin{conj*} \begin{num}\item  $\CA=\CB$ \item For any $\G\in \CA$ and  any totally ordered (by inclusion) subset $\CI$ of  $\CA\cap \CP(\G)$, there is an admissible order $\preceq$ of  $\G$ such that every element of $\CI$ is an initial section of $\preceq$.
\end{num}\end{conj*}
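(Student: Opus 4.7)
The plan is to address (a) and (b) together, with (a) as the main technical input and (b) following by a Zorn-style maximal chain argument that invokes (a) at the separation step. The central engine in both cases is Theorem~\ref{ss1.8}, which controls how $2$-closed sets are altered by adjunction or deletion of a single root, together with the reduction noted just before the conjecture, that admissibility of a total order is equivalent to its restriction to the positive roots of every dihedral reflection subgroup being a reflection order of that rank-two subsystem.

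For (a), fix $\G\in\CB$ and apply Zorn's lemma to the set of pairs $(\D,\preceq)$ with $\G\seq\D\seq\Phi_{+}$, $\D$ biclosed, and $\preceq$ an admissible order on $\D$ having $\G$ as an initial section, ordered by: $(\D,\preceq)\leq(\D',\preceq')$ iff $\D\seq\D'$, the restriction of $\preceq'$ to $\D$ agrees with $\preceq$, and $\D$ is an initial section of $\preceq'$. Directed unions of biclosed sets are biclosed (a direct check from the definition of $2$-closure), so chains have upper bounds; one needs to show that a maximal element has $\D=\Phi_{+}$. This reduces to an extensibility statement: whenever $\D\in\CB$ is a proper subset of $\Phi_{+}$, one can enlarge $\D$ to a strictly larger biclosed subset of $\Phi_{+}$ by adjoining a minimal biclosed increment. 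The finite version is Theorem~\ref{ss1.8}(a), and the content of the conjecture is that it persists without finiteness. One also needs a base case: that $\G$ itself carries at least one admissible order, for which the symmetric Zorn argument with $\G$ in place of $\Phi_{+}$ (using Theorem~\ref{ss1.8}(b) to trim minimal biclosed pieces off $\G$) would be applied.

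For (b), assume (a), fix $\G\in\CA$ and the chain $\CI\seq\CA\cap\CP(\G)$, and use Zorn to enlarge $\CI$ to a chain $\CI'$ in $\CB\cap\CP(\G)$ that is closed under arbitrary unions and intersections and maximal among such chains containing $\CI$. Define a relation $\preceq$ on $\G$ by declaring $\a\preceq\b$ iff every $I\in\CI'$ containing $\b$ also contains $\a$. Reflexivity and transitivity are automatic; antisymmetry and totality together reduce to the statement that any two distinct roots of $\G$ are separated by some $I\in\CI'$. If $\a\ne\b$ in $\G$ were not separated, then using (a) applied to $\G$ together with the meet/join description of Theorem~\ref{ss1.5} one would produce a new biclosed subset of $\G$ separating $\a$ from $\b$ and comparable with every element of $\CI'$, contradicting the maximality of $\CI'$. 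Once $\preceq$ is a total order, each initial section of $\preceq$ is a union of elements of $\CI'$, hence biclosed, so $\preceq$ is admissible by the dihedral criterion, and each $I\in\CI$ appears as an initial section by construction.

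The principal obstacle is the extensibility step in (a). In infinite rank the $2$-closure is not well behaved under adjunction of a single element, since $\ol{\L\cup\set{\a}}\sm\L$ can be infinite and poorly controlled; adjoining one root may force simultaneous adjunction of infinitely many others. Without an infinite-rank analogue of Theorem~\ref{ss1.8}(a), namely some structural description of minimal biclosed extensions of a biclosed set, the successor step of the Zorn construction cannot be guaranteed and the conjecture remains open. I would therefore expect any proof to rest on a new result about the fine structure of $2$-closure on infinite-rank root systems, going beyond what is established in this paper.
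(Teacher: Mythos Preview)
The statement is a \emph{conjecture}; the paper does not prove it, noting only that it is known for finite Coxeter groups and will be established for affine Weyl groups elsewhere. Your final paragraph is therefore the right conclusion: you correctly isolate the extensibility step in (a) as the missing ingredient, and your diagnosis that any proof would require an infinite analogue of Theorem~\ref{ss1.8} going beyond what is available matches the paper's own position.

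Your outline does, however, overreach on one point. You claim that (b) follows from (a) by a Zorn argument, with Theorem~\ref{ss1.5} supplying the separation step. This does not work as stated: Theorem~\ref{ss1.5} describes meets and joins only among the \emph{finite} biclosed sets $\Phi_{x}$ for $x\in W$, not among arbitrary elements of $\CB$. To manufacture a biclosed subset of $\G$ that separates $\a$ from $\b$ and is comparable with every member of the maximal chain $\CI'$, you would need the (conjectural) lattice property of $\CB(\Phi_{+})$ from Conjecture~\ref{ss1.14}(a), which is itself open. The paper is more circumspect: it records that, assuming (a), part (b) is equivalent to its special case $\G=\Phi_{+}$, and that this special case is in turn equivalent (by Zorn) to the bijection between admissible orders of $\G\in\CB$ and maximal chains in $\CB\cap\CP(\G)$. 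It does not assert that (a) by itself yields (b).
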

\begin{rem*} Part (a) of the  conjecture is equivalent to the conjecture \cite[Remark 2.12]{DyHS1}, and the special case $\G=\Phi_{+}$ of   (b) is   equivalent  to (a  positive answer for) a question raised  in  \cite[Remark 2.14]{DyQuo}.
Given (a), (b) is equivalent to its own special case with $\G=\Phi_{+}$. The conjecture is known for finite Coxeter groups (see below) and will 
be proved in the special case of affine Weyl groups in another paper.\end{rem*}

\ssect{Admissible orders as generalized reduced expressions} \label{basconj} It follows from \cite[Lemma 2.11]{DyHS1},  \cite[Example 2.2]{DyQuo} and Lemma \ref{ss3.1} that
 \bee \mset{B\in \CA\mid \vert B\vert \text{ \rm is finite}}=\mset{\Phi_{w}\mid w\in W}=\mset{B\in \CB\mid \vert B\vert \text{ \rm is finite}}\eee and that the admissible orders of $\Phi_{w}$ are in natural
 bijective correspondence with the reduced expressions of $w$ as follows:
 to a reduced expression $w=s_{\a_{1}}\cdots s_{\a_{n}}$ of $w$, one attaches an admissible order $\preceq$ of $\Phi_{w}=\set{\beta_{1},\ldots, \beta_{n}}$ given by $\b_{1}\prec \ldots \prec\b_{n}$ where $\b_{i}:=s_{\a_{1}}\ldots s_{\a_{i-1}}(\a_{i})$ (compare
 \eqref{eq1.2.1}). 
Thus, $W$ may be identified  with the subset $\set{\Phi_{w}\mid w\in W}$ of $\CB$,    the elements of $\CB$ may be viewed
 as   generalized   elements of $W$, and the notion of   admissible order of an element of $\CB$ may be regarded as  a generalization of the notion  of reduced  expression of an element of $W$.  In the case of finite $W$, the generalized notions are precisely equivalent to the original ones,  but this is not true for any infinite Coxeter group. In fact, it  it may be shown that there are examples of infinite, finitely generated Coxeter groups $W$ for which
 $\CA$ (and  hence  $\CB$) is uncountable.  In any case, the partial order of $\CB$ by inclusion naturally generalizes the weak order on $W$,
 and will be  called  the extended weak order of $W$.
 
Given Conjecture  \ref{ssinsect}(a),  Conjecture \ref{ssinsect}(b)  is  equivalent (by Zorn's lemma) to the statement  that for $\G\in \CB$,  the  map taking an  admissible order of $\G$ to the set of its initial sections gives a bijection between the admissible orders of $\G$ and the   maximal totally ordered subsets of $\mset{\D\in \CB\mid \D\seq \G}$. 
 The  conjectures  together  therefore  generalize the statements that every element $w$ of $W$ has a reduced expression and that the reduced expressions of an element $w$ of $W$ are in natural bijective correspondence with maximal chains  from $1$ to $w$ in weak order of $W$.   
 \subsection{Reflection subgroups and closed sets of roots}   One might ask how much  of the standard  combinatorics involving  elements of  finite Coxeter groups and their reduced expressions, when suitably reformulated, extends  to elements of $\CB$ and their admissible orders. For example, since weak order on $W$ is a lattice if $W$ is finite, it is natural to ask
 if $\CB$, ordered by inclusion, is also a lattice in general; this question was raised in  \cite[Remark 2.14]{DyQuo}. 
A more precise and more general  version of this question is formulated as a conjecture below, extending Theorems \ref{ss1.5} and \ref{ss1.11}. In order to do this,  we first record a simple Lemma.

  Note that the reflection subgroups $W'$ of $W$ with closed root subsystem $\Phi_{W'}:=\mset{\a\in \Phi
\mid s_{\a}\in W'}$ constitute a complete meet subsemilattice of the complete lattice of reflection subgroups.  An arbitrary reflection subgroup $W'$ of $W$ need not have  closed root system
(e.g.  in type $B_{2}$).    However, one does have the following:
 \begin{lem*}
Let $\Xi$, $\L$ be  subsets of $\Phi$ such that $\Xi$, $\L$ are both closed, $\Xi=-\Xi$ and $\Xi\seq \L$. Then $\Xi$ is the root system 
of the reflection subgroup $W':=\mpair{s_{\a}\mid \a\in \Xi}$ and the natural $W$-action on $\Phi$ restricts to a $W'$-action on $\L$.\end{lem*}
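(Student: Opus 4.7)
The plan is to prove the second assertion first and then extract the first from it. Since $W'$ is generated by the reflections $s_\a$ with $\a \in \Xi$, to show that $W'$ preserves $\L$ it suffices to check that $s_\a(\L) \seq \L$ for each such $\a$. Fix $\a \in \Xi$ and $\mu \in \L$ and write
\[
s_\a(\mu) \;=\; 1\cdot \mu \,+\, c\cdot \a, \qquad c := -\langle \mu, \a^\vee\rangle.
\]
If $c \geq 0$ then $s_\a(\mu) \in \bbR_{\geq 0}\mu + \bbR_{\geq 0}\a$, while if $c < 0$ then $s_\a(\mu) = \mu + (-c)(-\a) \in \bbR_{\geq 0}\mu + \bbR_{\geq 0}(-\a)$. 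In either case, because $\Xi = -\Xi \seq \L$ and $\L$ is $2$-closed, the root $s_\a(\mu)$ lies in $\L$.

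Next apply what has just been shown with $\L$ replaced by $\Xi$ itself (admissible, since $\Xi$ is closed and $\Xi \seq \Xi$), giving that $W'$ preserves $\Xi$. The inclusion $\Xi \seq \Phi_{W'} := \mset{\b \in \Phi \mid s_\b \in W'}$ is immediate, so the content lies in the reverse. Given $s_\b \in W'$, the first step is to pin down $\b \in \on{span}(\Xi)$: each generator $s_\a$ satisfies $s_\a(v) - v = -\langle v, \a^\vee\rangle\,\a \in \on{span}(\Xi)$ for all $v \in V$, so an induction on word length shows $w(v) - v \in \on{span}(\Xi)$ for every $w \in W'$; applied to $w = s_\b$ and a $v$ with $\langle v, \b^\vee\rangle \neq 0$, this forces $\b \in \on{span}(\Xi)$. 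Consequently some $\a \in \Xi$ has $\langle \a, \b^\vee\rangle \neq 0$, since otherwise $\langle \b, \b^\vee\rangle = 2$ would vanish by linearity. Setting $\g := s_\b(\a) \in \Xi$ (by the $W'$-invariance just proved), the identity
\[
\a - \g \;=\; \langle \a, \b^\vee\rangle\,\b
\]
displays $\pm \b$ (with sign according to that of $\langle \a, \b^\vee\rangle$) as a strictly positive real combination of $\a$ and $-\g$, both in $\Xi$. By $2$-closure of $\Xi$ and the symmetry $\Xi = -\Xi$, this gives $\b \in \Xi$.

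The step requiring the most care is ruling out the degenerate possibility that $s_\b$ acts trivially on $\Xi$, in which case $\g = \a$ and the above identity collapses. The $V/\on{span}(\Xi)$ argument eliminates it by forcing $\b \in \on{span}(\Xi)$, after which the nonvanishing of $\langle \cdot, \b^\vee\rangle$ on some element of $\Xi$ is automatic from $\langle \b, \b^\vee\rangle = 2 \neq 0$. The remainder is a direct application of the $2$-closure axiom combined with $\Xi = -\Xi$; in particular, the argument exploits the rank-two nature of the closure operator recalled in \ref{ss1.4}.
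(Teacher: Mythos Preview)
Your proof is correct. The argument for $W'$-stability of $\L$ is essentially identical to the paper's: both observe that $s_\a(\mu)\in(\mu+\bbR\a)\cap\Phi$ and use $\pm\a\in\Xi\seq\L$ together with $2$-closure of $\L$.

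Where you diverge is in deducing $\Phi_{W'}=\Xi$. The paper dispatches this in one line (``follows by taking $\L=\Xi$''), implicitly invoking the reflection-subgroup machinery of \cite{DyRef} recalled in \S\ref{ss5.3}: once $\Xi$ is $W'$-stable, every reflection $s_\b\in W'$ is $W'$-conjugate to a generating reflection $s_\a$ with $\a\in\Xi$, whence $\b=\pm w(\a)\in\Xi$. Your route is more elementary and entirely self-contained: you first pin $\b$ into $\on{span}(\Xi)$ via the observation that $w(v)-v\in\on{span}(\Xi)$ for all $w\in W'$, then pick $\a\in\Xi$ not orthogonal to $\b$, and finally read off $\b$ from the identity $\a-s_\b(\a)=\langle\a,\b^\vee\rangle\b$ using $2$-closure of $\Xi$ and $\Xi=-\Xi$. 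This avoids any appeal to the Coxeter structure of $(W',\chi(W'))$ and works directly from the closure axiom, at the cost of a few extra lines. A minor point: the paper does not introduce the notation $\b^\vee$, so for consistency you might write the reflection formula as $s_\a(\mu)=\mu-2\frac{\langle\mu,\a\rangle}{\langle\a,\a\rangle}\a$ or simply $s_\a(\mu)=\mu+c\a$ for some $c\in\bbR$.
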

\begin{proof}  For $\a\in \Xi$ and $\b\in \L$, 
   $s_{\a}(\b)\in (\b+\bbR \a)\cap \Phi\seq \L$ since $\L$ is closed, $\b\in \L$ and $\set{\a,-\a}\seq \Xi\seq \L$. This proves that $\L$ is $W'$-stable. The statement that $\Xi$ is the root system of $W'$ follows by taking $\L=\Xi$.\end{proof}

\subsection{Conjecture on quasiparabolic weak order and $2$-closure} \lb{ss1.14} 
 A subset $\L$ of $\Phi$ will be called  quasiparabolic 
  if $\L$ is closed and $\L\cup -\L=\Phi$.  The standard parabolic subsets are obviously quasiparabolic. There is a classification of quasiparabolic subsets (and more generally, of elements of $\CB(\Phi)$) in terms of elements of $\CB(\Phi_{+})$ and additional combinatorial data, which  won't be discussed here. In  \cite{BF},  analogues of quasiparabolic sets in (possibly infinite) oriented matroids are called large convex sets.
  
   Fix a quasiparabolic subset $\L$ of $\Phi$. 
   By the preceding Lemma, $\Psi:=\Psi(\L)=\L\cap -\L$ is 
  the root system of a reflection subgroup $W'=W(\L):=\mpair{s_{\a}\mid \a\in \Psi}$ of $W$. Note that  $W'$ acts on
  $\L$  by $(w,\g)\mapsto w(\g)$, preserving $\Psi$. This $W'$-action  on 
  $\L$ (resp., $\Psi$) obviously  induces a $W'$-action by order automorphisms on $\CB(\L)$ (resp., $\CB(\Psi)$).
  
 Define the map $\tau\colon \CP(\L)\rightarrow \CP(\Psi)$ by $\t(\G):=\G\cap \Psi$
 for $\G\seq \L$.
 Call $\t(\G)$ the type of  $\G$. One clearly has $\t(w(\G))=w(\t(\G))$ for $w\in W$,
 and $\t(\G)\in \CB(\Psi)$ if $\G\in\CB(\L)$.

   \begin{conj*} Let $\L$ be a quasiparabolic subset  of $\Phi$  (for example, a standard parabolic subset such as $\Phi_{+}\cup \Phi_{J}$ for $J\seq S$). Set $\Psi:=\Psi(\L)$
   and $W':=W(\L)$.
    Then 
\begin{num}
 \item  The set $\CB(\L)$ of  biclosed subsets of  $\L$ 
is a complete ortholattice. The join  of a family $X$  of  biclosed subsets of $\L$ is given by  $\join X=\ol{\cup_{\G\in X}\G}$, and the ortholattice complement is just set complement in $\L$.
\item The restriction of $\tau$ to a map $\CB(\L)\rightarrow \CB(\Psi)$ is a $W'$-equivariant  morphism of complete ortholattices  (i.e. it preserves $W'$-action, preserves  arbitrary meets and joins, and preserves complements)
\item  If $\G$ is  coclosed in $\L$, then  $\ol{\G}$ is  biclosed in $\L$.
\item If $\G\seq \L$, then $\tau(\ol{\G})=\ol{\tau(\G)}$.\end{num}
\end{conj*}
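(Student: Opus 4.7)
The plan is to derive parts (a) and (b) from the single technical assertion (c), and to establish (d) along the same lines using the fact that $\Psi$ is itself $2$-closed in $\Phi$. Throughout, most of the substantive work should reduce to rank-two (dihedral) configurations, following the pattern used in Theorems \ref{ss1.5} and \ref{ss1.11}.

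For the formal structure of (a), I first observe that $\L$ and $\eset$ both lie in $\CB(\L)$ (each is trivially closed and has closed complement in $\L$), so $\CB(\L)$ has a top and a bottom, and $\G \mapsto \L \sm \G$ is an order-reversing involution. Granting the join formula $\bigvee X = \ol{\cup_{\G \in X}\G}$, the ortholattice identities $\G \vee (\L \sm \G) = \ol{\L} = \L$ and $\G \wedge (\L \sm \G) = \eset$ become automatic (with meets forced by De Morgan). The remaining content of (a) is then that $\ol \D \in \CB(\L)$ whenever $\\D \seq \L$ has closed complement in $\L$: indeed, for a family $X \seq \CB(\L)$ the complement of $\cup_{\G \in X}\G$ in $\L$ is the intersection $\cap_{\G \in X}(\L \sm \G)$ of closed sets, hence closed, so $\cup_{\G \in X}\G$ is coclosed. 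Thus (a) reduces to (c).

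The central difficulty is therefore the proof of (c). Given a coclosed $\G \seq \L$, I would construct $\ol \G$ by transfinitely iterating the rank-two closure operation starting from $\G$, and argue inductively that at each stage the complement in $\L$ remains closed. The inductive step, for two roots $\a,\b$ in the current complement whose rank-two positive cone meets $\L$ in some $\g$, asks that $\g$ again lie in the complement; by restricting to the dihedral subsystem $\Phi \cap (\bbR\a + \bbR\b)$ this becomes a purely rank-two assertion. The hardest configurations are infinite dihedral ones, where the cone can contain infinitely many roots; I would try to handle these using the quasiparabolicity hypothesis $\L \cup -\L = \Phi$ together with closedness of $\L \sm \G$ to rule out pathological arrangements. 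This dihedral verification is where I expect the real obstacle to lie.

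Assertions (b) and (d) would then be deduced formally. For (b), the $W'$-equivariance of $\t$ is immediate from $W'$-stability of $\L$ and $\Psi$; orthocomplement preservation is the identity $(\L \sm \G) \cap \Psi = \Psi \sm \t(\G)$; preservation of joins reads
\[\t\bigl(\ol{\cup_{\G \in X}\G}\bigr) = \ol{\cup_{\G \in X}\t(\G)},\]
which is exactly (d) applied to $\cup_{\G \in X}\G$; and meet preservation then follows by De Morgan from join and complement preservation. Thus (b) reduces to (d). For (d), the inclusion $\ol{\G \cap \Psi} \seq \ol \G \cap \Psi$ is immediate since $\Psi$ is closed and contained in $\L$. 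The reverse inclusion is again a dihedral statement: a root of $\Psi$ lying in the positive cone of two roots of $\ol \G$ must already lie in the closure of $\G \cap \Psi$, which I would verify by a rank-two analysis of how a positive cone intersects the subsystem $\Psi$.
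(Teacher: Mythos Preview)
The statement you are attempting to prove is a \emph{conjecture}, not a theorem: the paper does not prove it and explicitly says it is open for all infinite irreducible Coxeter groups except the infinite dihedral ones. So there is no ``paper's own proof'' to compare against. Your reductions (a)~$\Leftarrow$~(c) and (b)~$\Leftarrow$~(a)+(d) are correct and are essentially what the paper notes in the Remark following the conjecture (the paper even points out that (a) follows from the weaker special case of (c) asserting $\ol{\G\cup\D}\in\CB(\L)$ for $\G,\D\in\CB(\L)$, using that $\CB(\L)$ is already a complete poset under directed unions).

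The genuine gap is in your sketch of (c). Your inductive step claims that checking closedness of $\L\sm\G_{n+1}$ ``becomes a purely rank-two assertion'' by restricting to the dihedral subsystem through $\a,\b\in\L\sm\G_{n+1}$. This is false. To show a root $\g$ in the positive cone of $\a,\b$ does not lie in $\G_{n+1}=\bigcup_{\d,\e\in\G_n}\ol{\set{\d,\e}}$, you must rule out \emph{every} pair $\d,\e\in\G_n$ with $\g\in\ol{\set{\d,\e}}$, and such $\d,\e$ need not lie in the plane $\bbR\a+\bbR\b$ at all. The obstruction is genuinely higher-rank: the interaction between the dihedral subsystem containing $\a,\b,\g$ and the (possibly many) dihedral subsystems through $\g$ meeting $\G_n$ cannot be analyzed one plane at a time. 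This is precisely why the paper's actual results (Theorems \ref{ss1.5}, \ref{ss6.1}, \ref{ss1.8}, \ref{ss1.11}) require substantial arguments using Bruhat-order machinery (Lemma \ref{ss1.7}, the $Z$-property, maximal dihedral subgroups via Lemma \ref{ss5.4}) rather than naive dihedral reduction, and even then only establish the finite/cofinite cases for $\L$ of parabolic rank at most one. The same objection applies to your sketch of (d): a root of $\Psi$ in the cone of $\a,\b\in\ol\G$ need not be reachable from $\G\cap\Psi$ by operations within that single plane.
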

\begin{rem*} There are several dependencies amongst parts of this conjecture, and reductions of its parts to superficially weaker statements. 
For example,  using that $\CB(\L)$ is a complete poset, the conjecture (a) above follows easily from the special case of
(c) according to which $\ol{\G\cup\D}\in\CB(\L)$ if
$\G,\D\in \CB(\L)$.  Most of this paper is  devoted to checking parts of the conjecture involving subsets of $\L$ which are either finite or cofinite in $\L$, where $\L$ is a (rank one or zero) standard parabolic subset. One can obtain 
by similar arguments or reduction to results here some more  weak supporting  evidence involving
subsets of $\L$ which are neither finite nor cofinite, and for more general quasiparabolic subsets $\L$. The conjecture is open for all infinite irreducible Coxeter groups except the infinite dihedral groups.\end{rem*}

\ssect{Conjectural structure of reflection orders} The set of  quasiparabolic subsets is the largest natural class of subsets of $\Phi$  known to the author   for which conjecture  \ref{ss1.14}(a) seems plausible; for example, the set $\CB(\L)$ is not necessarily a lattice if   $\L$ is the complement, in a finite root system  $\Phi$ of type $A_{3}$,  of a rank one standard parabolic subset. 

 However, for at least some  quasiparabolic sets $\L$,  the coclosed subsets of $\L$ may not be the largest natural family of sets all of which have biclosed closure.  Say that a subset $\G$ of $\L=\Phi_{+}$ is unipodal if it has the following property:
if $\a\in \G$ and   $W'\in \CM_{\a}$  is a maximal dihedral refelction subgroup of $W$ containing $s_{\alpha}$, with canonical simple system  $\Pi_{W'}=\set{\b,\g}$ with respect to $(W,S)$, then either 
$\b\in \G$ or $\g\in \G$ (see \ref{ss5.3} for notation  and more details). It is easy to see that coclosed subsets of $\Phi_{+}$ are unipodal, so the following strengthens  the special case $\L=\Phi_{+}$ of Conjecture \ref{ss1.14}(c). 

\begin{conj*} If $\G\seq \Phi_{+}$ is unipodal, then $\ol{\G}$ is biclosed in $\Phi_{+}$.\end{conj*}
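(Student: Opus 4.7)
The plan is to show that $\Phi_+ \sm \ol{\G}$ is $2$-closed, the other half of biclosedness of $\ol{\G}$ being automatic. The reduction I would use is that $2$-closure is determined by rank-two tests: if $\a, \b \in \Phi$ and $\d \in (\bbR_{\geq 0}\a + \bbR_{\geq 0}\b) \cap \Phi$, then $\d$ lies in the rank two subsystem $\Phi_{W''}$ for the dihedral reflection subgroup $W'' := \pair{s_\a, s_\b} \leq W$. Consequently, a subset $\D \seq \Phi_+$ is biclosed iff, for every maximal dihedral reflection subgroup $W'$ of $W$ with canonical simple system $\Pi_{W'} = \set{\m, \n}$, the intersection $\D \cap \Phi_+ \cap \Phi_{W'}$ is an initial or final section of the angular order on $\Phi_+ \cap \Phi_{W'}$ (starting at $\m$ or $\n$ respectively). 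So it suffices to verify this for $\ol{\G}$, fixing one such $W'$.

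The unipodal hypothesis controls the $\G$-side: if $\G \cap \Phi_+ \cap \Phi_{W'}$ is nonempty, then (picking any $\a$ in it) $\m \in \G$ or $\n \in \G$. Say $\m \in \G$; the other case is symmetric. I would aim to show $\ol{\G} \cap \Phi_+ \cap \Phi_{W'}$ is an initial section of the angular order starting at $\m$. To this end, build $\ol{\G}$ transfinitely: $\G^{(0)} := \G$, with $\G^{(\a+1)}$ adjoining to $\G^{(\a)}$ all roots in $(\bbR_{\geq 0}\d + \bbR_{\geq 0}\e) \cap \Phi$ for $\d, \e \in \G^{(\a)}$, and unions at limit ordinals. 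The target invariant for a transfinite induction is: for every max dihedral $W'$ with canonical simples $\m, \n$, if $\m \in \G$ then $\G^{(\a)} \cap \Phi_+ \cap \Phi_{W'}$ lies in an initial section from $\m$ (symmetrically for $\n$), while if $\m, \n \notin \G$ the intersection stays empty. Limit stages are routine; the content lies in successors.

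The hard part will be the successor step: a root $\r \in \Phi_+ \cap \Phi_{W'}$ can be added at stage $\a + 1$ from a positive cone of roots $\d, \e \in \G^{(\a)}$ that lie outside $\Phi_{W'}$. Such a $\r$ slips into $\Phi_{W'}$ only through a linear-algebraic coincidence, and one must show it lands on the ``correct side'' of the angular order, extending the already-constructed initial (or final) section rather than appearing in a gap or at the wrong end. This is the crux of the conjecture: it appears to require a stronger simultaneous invariant across all maximal dihedral subgroups, tracking a ``preferred side'' for each $W'$ and showing these preferences propagate coherently through $2$-closure. A natural tool would be a careful analysis of the three interacting rank-two subsystems spanned by pairs from $\set{\d, \e, \r}$, aiming to show that any violation of the invariant forces a configuration back in $\G^{(0)} = \G$ contradicting unipodality. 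It is this propagation of a global orientation through iterative $2$-closure that I expect to carry essentially all the substantive content of the conjecture.
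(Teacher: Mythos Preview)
This statement is a \emph{conjecture} in the paper, not a theorem; the paper offers no proof. It remarks only that the conjecture holds for finite Coxeter groups ``by an argument using Bruhat order'' to appear elsewhere, and otherwise leaves it open. So there is no paper proof to compare against, and your proposal should be assessed on its own terms.

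Your proposal is not a proof but a plan that correctly locates the difficulty and then stops. You yourself flag the successor step---where a root $\rho\in\Phi_{W',+}$ enters the closure via $\delta,\epsilon\notin\Phi_{W'}$---as ``the crux of the conjecture,'' and you do not resolve it. That is an honest assessment: this propagation step is precisely what makes the conjecture hard, and nothing in your outline supplies the missing mechanism.

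There is also a problem with your proposed invariant even before the hard step. At stage $\alpha=0$ you have $\Gamma^{(0)}=\Gamma$, and unipodality only tells you that if $\Gamma\cap\Phi_{W',+}\neq\emptyset$ then one canonical simple root, say $\mu$, lies in $\Gamma$. It does \emph{not} say $\Gamma\cap\Phi_{W',+}$ is an initial section from $\mu$: $\Gamma$ is an arbitrary unipodal set, not closed, so this intersection could be $\{\mu\}$ together with scattered roots far from $\mu$ in the angular order. Your phrase ``lies in an initial section from $\mu$'' is too weak to be useful (every subset of $\Phi_{W',+}$ lies in the full initial section $\Phi_{W',+}$), while the stronger reading ``is an initial section'' fails at the base case. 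So the induction as stated does not get off the ground; any workable invariant would have to be more subtle, likely tracking only what happens in the limit rather than at each stage.

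Finally, note that the one case the paper asserts is known (finite $W$) is handled by Bruhat-order techniques, not by a direct closure-theoretic induction of the kind you sketch. That suggests the combinatorics of Bruhat covers (as in Lemma~\ref{ss1.7} and the arguments of Sections~\ref{S5}--\ref{S6}) may be a more promising entry point than a bare transfinite closure argument.
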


Some evidence for this conjecture will be given in another paper; in particular, it holds for finite Coxeter groups, by an argument using Bruhat order.
The conjecture  would also imply that an arbitrary  biclosed 
subset $\G$ of $\Phi_{+}$ is the directed union of the biclosed sets 
obtained as the closures of finite unipodal subsets of $\G$, and hence that the (conjectured)
 complete ortholattice   $\CB(\Phi_{+})$ is  an algebraic lattice (see  e.g. \cite{DavPr} for the definition).
 In conjunction with 
Conjecture \ref{ssinsect}, the above conjecture  would lead to a quite 
satisfactory
 description of 
reflection orders and their initial sections (for example, one could effectively 
compute  with them, in examples or in general,  by finite  ``approximations,'' in a 
similar manner as one may work with elements of  profinite groups). 

 Note however that if $(W,S)$ is an infinite dihedral Coxeter 
 system, then there are two ``exceptional'' quasiparabolic   subsets $\L$ of $\Phi$  such that 
 $\Phi=\L\dotcup -\L$  where  $\L\cap \Phi_{+}$ and $\L\cap \Phi_{-}$ are both 
 infinite; for these,
 $\CB(\L)$   is a complete ortholattice (as conjectured in \ref{ss1.14}) but not an algebraic lattice.

\ssect{Conjecture on biclosed subsets of quasi-positive systems}\label{ss:quasipos} In the terminology of \cite{DyRig}, a subset $\L$ of $\Phi$ is called a quasipositive system if   $\Phi=\L\dotcup -\L$. Thus, closed quasi-positive systems $\L$ are special    quasiparabolic subsets of  $\Phi$, and $\Phi_{+}$ is itself a closed  quasipositive system. The following   conjecture extends   Conjecture \ref{ssinsect}(b).
 \begin{conj*}  Let $\L$ be any closed quasipositive  system of $\Phi$. 
 Let $M$ be any maximal (under inclusion)  totally ordered subset of $\CB(\L)$. Then there is a total order $\preceq$ of $\L$ such that $M$ consists of all initial sections   of $\preceq$.\end{conj*}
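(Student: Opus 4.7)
The plan is to extract the desired total order $\preceq$ on $\L$ from the maximal chain $M$ by analyzing the ``jumps'' in $M$ and showing each consists of a single root. I would work under Conjecture \ref{ss1.14}(a), so that $\CB(\L)$ is a complete ortholattice with joins given by closure of union and meets by intersection.

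First, by maximality of $M$: the sets $\eset$ and $\L$ lie in $M$, and for any $N\seq M$ the join $\join N$ and meet $\meet N$ computed in $\CB(\L)$ belong to $M$. Indeed, each $B\in M$ is comparable to every element of $N$, so either $B\seq A$ for some $A\in N$ (hence $B\seq \join N$) or $B$ is an upper bound of $N$ in $\CB(\L)$ (hence $\join N\seq B$); thus $\join N$ is comparable with every element of $M$, and maximality forces $\join N\in M$, and dually for meets.

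Second, for each $\a\in\L$ define
\[ A_{\a}^{-}:=\join\mset{B\in M\mid \a\notin B},\qquad A_{\a}^{+}:=\meet\mset{B\in M\mid \a\in B}, \]
both in $M$. By construction $A_{\a}^{-}\subsetneq A_{\a}^{+}$, $\a\in A_{\a}^{+}\sm A_{\a}^{-}$, and no element of $M$ lies strictly between them. The crucial step, and the main obstacle, is to prove this jump is a single root: $A_{\a}^{+}\sm A_{\a}^{-}=\set{\a}$. If some $\b\in A_{\a}^{+}\sm A_{\a}^{-}$ differed from $\a$, the candidate $C:=\ol{A_{\a}^{-}\cup\set{\b}}$ would lie inside the closed set $A_{\a}^{+}$, properly contain $A_{\a}^{-}$, and---if $C\in\CB(\L)$ with $C\neq A_{\a}^{+}$---contradict the maximality of $M$. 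Verifying the biclosedness of $C$ in $\L$ is essentially a relativized form of Conjecture \ref{ss1.14}(c) for the interval $[A_{\a}^{-},A_{\a}^{+}]$. My plan is to first show that the ``slice'' $D:=A_{\a}^{+}\sm A_{\a}^{-}$ is closed in $\Phi$ (it equals the intersection of the closed sets $A_{\a}^{+}$ and $\L\sm A_{\a}^{-}$), and then apply the Lemma preceding Conjecture \ref{ss1.14} to the symmetric hull of $D$ to obtain a reflection subgroup whose closed quasipositive subsystems parametrize the interval $[A_{\a}^{-},A_{\a}^{+}]$. This reduces the existence of a strictly intermediate biclosed set to the rank-one/dihedral case, which is accessible via Theorem \ref{ss1.11} and Theorem \ref{ss1.8}.

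Once single-element jumps are established, define $\a\preceq\b$ iff $A_{\a}^{+}\seq A_{\b}^{+}$, equivalently iff $\a\in A_{\b}^{+}$, iff $\b\notin A_{\a}^{-}$; total orderedness of $M$ makes $\preceq$ a total order on $\L$. Each $A\in M$ is an initial section of $\preceq$: for $\a\in A$ and $\b\in \L\sm A$ one has $A_{\a}^{+}\seq A\seq A_{\b}^{-}$, whence $\a\prec\b$. Conversely, any initial section $I$ of $\preceq$ equals the directed union $\cup_{\a\in I}A_{\a}^{+}$ of biclosed sets, hence is biclosed in $\L$ and comparable with every element of $M$; maximality then forces $I\in M$. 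This yields the desired bijection between $M$ and the initial sections of $\preceq$.
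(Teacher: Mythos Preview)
The statement is presented in the paper as an open \emph{conjecture}; no proof is given. The paper only remarks afterward that the statement does not extend to general quasiparabolic subsets $\L$, pointing to the examples in \S\ref{ss2.4}. So there is no ``paper's own proof'' to compare your proposal to, and you should not expect your argument to go through without assuming further unproved conjectures.

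Even granting Conjecture~\ref{ss1.14}(a), your argument has a genuine gap at the single-root-jump step. Your plan to pass to the ``symmetric hull'' of $D=A_{\a}^{+}\sm A_{\a}^{-}$ and invoke the Lemma preceding Conjecture~\ref{ss1.14} cannot work as stated: that Lemma requires the symmetric set $\Xi$ to be contained in $\L$, but a closed quasipositive system satisfies $\L\cap(-\L)=\eset$, so $\L$ has no nonempty symmetric subset and the Lemma yields nothing. Nor does Conjecture~\ref{ss1.14}(c) directly help: it requires the input set to be \emph{coclosed} in $\L$, whereas $A_{\a}^{-}\cup\set{\b}$ has complement $(\L\sm A_{\a}^{-})\sm\set{\b}$, a closed set with one point removed, which is typically not closed. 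The assertion that the interval $[A_{\a}^{-},A_{\a}^{+}]$ is ``parametrized by closed quasipositive subsystems'' of some reflection subgroup is unsupported: $D$ is merely closed in $\Phi$, need not span a plane, and there is no evident reflection subgroup with $D$ as a positive system.

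What your argument really needs is that every cover relation in $\CB(\L)$ is by a single root. For $\L=\Phi_{+}$ this is already essentially the content of Conjecture~\ref{ssinsect}, which the present conjecture is said to extend; it is open for all infinite irreducible Coxeter groups other than the infinite dihedral group. Your outline does correctly identify the shape of the argument one would want (closure of $M$ under joins and meets, then single-element jumps, then reconstruction of $\preceq$), but the middle step is precisely the unresolved part of the conjecture.
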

 From the examples in  \ref{ss2.4}, one sees  that the conjecture does not extend as stated to general quasiparabolic subsets $\L$ of $\Phi$.
 
 \ssect{Conjecture on initial sections and Bruhat order} Define   a function \be \tau\colon \CP(\Phi_{+})\to \CP(W)\ee as follows: for any $\G\seq \Phi_{+}$, $\tau(\G)$ is the subset of $W$ consisting of all 
elements $w\in W$ such that there exist 
 $\a_{1},\ldots, \a_{n}\in \G$ with 
 $w=s_{\a_{1}}\cdots s_{\a_{n}}$ and 
 $0=l(1_{W})<l(1_{W}s_{\a_{1}})<\ldots <l(1_{W}s_{\a_{1}}\cdots s_{\a_{n}})$. 
 The motivation and natural context for the study of this function is in relation to    the twisted Bruhat orders of \cite{DyHS2}, which are  not discussed in this paper.
Instead,  $\tau$ is used  here to provide  another, quite different  description of the (conjectural) join in the poset $\CB(\Phi_{+})$ 
 
  \begin{conj*}    If $\G,\L\in \CB(\Phi_{+})$,   then $\mset{\a\in \Phi_{+}\mid s_{\a}\in\tau(\G\cup \L)}$  is the join (least upper bound)  of $\G$ and $\L$ in the poset $\CB(\Phi_{+})$\end{conj*}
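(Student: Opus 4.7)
The plan is to verify three assertions about $M := \mset{\a \in \Phi_+ \mid s_\a \in \tau(\G \cup \L)}$: (i) $\G \cup \L \seq M$; (ii) $M \in \CB(\Phi_+)$; and (iii) $M \seq \Sigma$ for every biclosed $\Sigma$ containing $\G \cup \L$. Together these identify $M$ as the least biclosed set containing $\G \cup \L$, hence as the required join. Assertion (i) is immediate: for $\a \in \G$, the singleton chain $w = s_\a$ satisfies $0 = l(1_W) < l(s_\a)$, so $s_\a \in \tau(\G) \seq \tau(\G \cup \L)$ and $\a \in M$; the same argument applies to $\L$.

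For (iii), the key lemma to prove is that, for any biclosed $\Sigma \seq \Phi_+$ and any reflection $s_\a \in \tau(\Sigma)$, one has $\a \in \Sigma$. Proceed by induction on the length $n$ of a witnessing chain $s_\a = s_{\a_1} s_{\a_2} \cdots s_{\a_n}$ with $\a_i \in \Sigma$ and $l(s_{\a_1}\cdots s_{\a_k})$ strictly increasing in $k$. The base $n = 1$ is trivial. For the inductive step, factor $s_\a = v\, s_{\a_n}$ with $v := s_{\a_1}\cdots s_{\a_{n-1}}$; since $v = s_\a s_{\a_n}$ is a product of two reflections, $v$ lies in the dihedral subgroup $\mpair{s_\a, s_{\a_n}}$ and $\a$ lies in the associated rank-two root subsystem $\Psi$. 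Tracking the positive roots $w_{k-1}(\a_k)$ introduced at each length-ascent step of the chain for $v$, isolating those that fall in $\Psi$, applying the inductive hypothesis to sub-chains that simplify to reflections, and exploiting 2-closedness of $\Sigma$ inside $\Psi$, one should force $\a \in \Sigma$. Specialising $\Sigma$ to any biclosed set containing $\G \cup \L$ then yields $M \seq \Sigma$.

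Assertion (ii), the biclosedness of $M$, is the main obstacle. For $M$ being 2-closed, given $\a, \b \in M$ and $\g = a\a + b\b \in \Phi_+$ with $a, b \geq 0$, I would realise $s_\g$ as an alternating product of $s_\a, s_\b$ inside the dihedral subgroup $\mpair{s_\a, s_\b}$, and interleave the chains witnessing $s_\a, s_\b \in \tau(\G \cup \L)$ into a single chain for $s_\g$ with strictly increasing partial lengths in $W$; Theorem \ref{ss1.11} on rank-one parabolic weak order should be the principal input controlling the length-ascent conditions during the interleaving. The dual claim that $\Phi_+ \sm M$ is 2-closed is the central technical hurdle: it requires refactoring any chain witnessing $s_\g \in \tau(\G \cup \L)$, via its interaction with the rank-two subsystem spanned by $\a, \b$, into a chain witnessing $s_\a$ or $s_\b$ in $\tau(\G \cup \L)$. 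I expect this step to need an auxiliary induction together with delicate control of how chains cross dihedral subsystems, invoking Theorem \ref{ss1.5} for finite sub-cases and the stability of $\CB(\Phi_+)$ under directed unions (recorded in \ref{ss:term}) to reach the general, possibly infinite, biclosed $\G, \L$.
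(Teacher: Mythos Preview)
The statement you are attempting to prove is a \emph{conjecture} in the paper, not a theorem; the paper offers no proof and explicitly remarks that it ``is open even for finite Coxeter groups.'' There is therefore nothing to compare your proposal against, and you should not expect the argument to go through by the methods of the paper.

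Your outline correctly identifies the three things that would need to be shown, and assertion (i) is indeed immediate. But assertions (ii) and (iii) are not established by what you have written. For (iii), your inductive step is not a proof: you observe that $v=s_{\a_1}\cdots s_{\a_{n-1}}$ lies in the dihedral group $\mpair{s_\a,s_{\a_n}}$, but $v$ is not a reflection in general, so the inductive hypothesis does not apply to it, and the phrases ``tracking the positive roots \ldots\ isolating those that fall in $\Psi$ \ldots\ applying the inductive hypothesis to sub-chains that simplify to reflections'' do not specify a workable mechanism. For (ii), you yourself flag closedness and especially coclosedness of $M$ as the main obstacles, and your sketch (``interleave the chains \ldots\ Theorem \ref{ss1.11} should be the principal input \ldots\ I expect this step to need an auxiliary induction together with delicate control'') is a statement of hope rather than an argument. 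Theorem \ref{ss1.11} concerns finite biclosed subsets of a rank-one standard parabolic set and gives no obvious control over length-ascent conditions for arbitrary interleaved chains in $W$.

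In short: the paper has no proof of this statement because none is known, and your proposal does not supply one. If you believe you can carry out (ii) and (iii), you would be resolving an open problem; that would require a genuinely new idea, not a reorganisation of results already in the paper.
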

 \begin{rem*}  The above conjecture  is   open even for finite Coxeter groups.
    As already mentioned, Conjecture \ref{ss1.14} is also open for finite Coxeter groups in the cases $2\leq \vert J\vert $; all other conjectures mentioned  are known to hold for finite Coxeter groups.  Some other results of this paper, such as Theorem \ref{ss1.8},  are  special cases  of general conjectures which are not stated here.  \end{rem*} 

\ssect{Aanalogous questions for  simplicial oriented geometries} 
There is  a natural ``convex geometric''  closure operator $d:\G\mapsto \bbR _{\geq 0}\G\cap \Phi$ on $\Phi$ taking a set of roots to the set of roots in its non-negative real span.  It  is shown   in Section \ref{S10} that many of the main results of the paper  hold  for $d$ just as for  $2$-closure. However,
while every $d$-biclosed subset of
$\Phi_{+}$ is an initial section of $\Phi_{+}$, it can be shown that there exist  finite rank $W$ for which  not every   initial section is $d$-biclosed, and thus the  $d$-analogue of Conjecture \ref{ssinsect}(a) fails.
This is unsurprising since $2$-closure  and initial sections are combinatorial in nature,
but $d$-closure is not (see \ref{ss10.3}).

 In Section \ref{S10} of this paper, it is shown that  the $d$-closure analogue of Theorem \ref{ss1.5} holds. We   do not know  whether, more generally,  the  analogue of
conjecture \ref{ss1.14}(a) in the special   case $\L=\Phi_{+}$, but for $d$-closure instead of $2$-closure, holds. However, 
 \cite[\S 5-6]{BjEZ} proves an   analogue of that conjecture in a different 
(and quite general) context, namely 
for the posets  of regions of a finite central simplicial hyperplane arrangement in 
a finite-dimensional real vector space (we consider only ``essential'' arrangements, namely those  for which the normals to the hyperplanes span the ambient vector space).  More generally, such an analogue holds for   simplicial oriented geometries
(which are   oriented matroids with additional properties; see op. cit. and \cite{BVSWZ} for  background).  It would be 
interesting to know if  Conjecture \ref{ss1.14}(c), for example, also has an 
analogue in that generality. There are (at least) two natural closure operators which one 
could use; the natural oriented matroid closure operator $d$ (see \cite{BF} or \cite[\S 6]{BjEZ}), and  an 
analogue of $2$-closure constructed from $d$ in a similar way as the $2$-closure on root 
systems is defined in terms of their geometric $d$-closure. In view of the results of this paper in the case of finite root systems, it  would  be particularly interesting to see how the $2$-closure behaves in simplicial geometries (it certainly does not have good properties for (possibly non-simplicial) oriented geometries in general).

Another interesting point for comparison of  \cite{BjEZ} with the results here   is the following. If the poset of regions of a finite central  hyperplane arrangement  in a real vector  space  is a lattice, then the base region
is simplicial by \cite[Theorem 3.1]{BjEZ}. However, we show in Section \ref{S10} of this paper that  the $d$-closure analogue of Theorem \ref{ss1.5}(a) holds even if the simple roots are linearly dependent (and  the fundamental chamber therefore not simplicial). These facts are  not contradictory, as 
the Coxeter group case corresponds to an infinite hyperplane arrangement and involves only a subset  of the ``regions'' (those in the $W$-orbit of the fundamental chamber i.e in the ``Tits
cone'').
\ssect{Questions on ``oriented geometry'' root systems} \lb{matroid}  A final  subtle point we wish to raise  concerning  the conjectures in this paper (such as  \ref{ss1.14}) is whether 
$2$-closure is the ``natural'' closure operator for use in their formulation.  It may well not be, as it does not induce an antiexchange closure on the positive roots in general (see Section \ref{S10}).

 It is  a formal fact (the proof of which is similar to that of \cite[Theorem 5.5]{BjEZ}
 and not given here)  that if Conjecture \ref{ss1.14}(a) holds for one closure relation, then  the closure operator $a$ on $\L$ in which the closed sets are  intersections of  elements of $\CB(\L)$ has $\CB(\L)$ as its $a$-biclosed sets and satisfies $\join X=a(\cup_{\G\in X}\G)$ for $X\seq \CB(\L)$.
It seems possible  that, if the conjectures hold for any closure operator on $\Phi$, there may be several different natural  closure operators $e$  on $\Phi$ for which they hold
(some evidence for this can be seen in Section \ref{S10}).

One  natural  candidate closure operator   we wish to informally describe  requires notions of  infinite oriented matroids, one definition of which   can be found in \cite{BF}.   We consider only a natural subclass,  which we shall henceforth just call oriented geometries, corresponding to oriented geometries in the case of  finite oriented  matroids (see also \cite[Exercise 3.13]{BVSWZ}). We shall not be more precise as we make only vague remarks below.  
  
   Take a $W\times \set{\pm 1}$-set   $\Psi:=T\times\set{\pm 1}$ 
   corresponding abstractly to the $W\times \set{\pm 1}$-set of roots of 
   $(W,S)$ in its standard root systems (see \ref{ss10.1a}). Consider   
   oriented geometry structures on this set, preserved by the $W$-action  
   of $(W,S)$, which restrict to the standard oriented geometry structure on 
   the roots of any dihedral reflection subgroup  (the standard structure 
   is the one obtained by transfer of structure  from any of their standard 
   root systems). Call such a structure an oriented geometry root system of 
   $W$;  any standard root system gives rise to one in this more general 
   sense.   Each oriented geometry root system gives (by definition in \cite{BF}) a closure operator on $\Psi$; these include analogues of  the 
   $d$-closures as previously considered, but one might  expect there 
   could be many more (not for 
   finite Coxeter groups, but at least for some  infinite non-affine Coxeter ones). The closure operator on $\Psi$ of interest (as a possibly 
   natural one for use in the  conjectures)  is the one which has as 
   closed sets the intersections of closed sets for these oriented geometry   root system closure operators. 
   It would also  be interesting (and possibly important in relation to the conjectures)  to know to  what extent  the results and conjectures of this paper
can be extended to the groupoids introduced in \ref{ss1.10} in this paper or the related and better studied groupoids such as as those in \cite{HY}, \cite{CH}, \cite{HV} (which is explictly concerned with weak order on Weyl groupoids)  and \cite{HowBr}.

\section{Example: Finite dihedral groups}\lb{S2}
In this section,  a number of the objects attached to Coxeter groups in Section \ref{S1} are  explicitly described in the case of a finite dihedral group $W$, and, at the end,  illustrated even more concretely  by example of the Weyl  group of type $A_{2}$. The reader is invited to consider the necessary changes for the infinite dihedral group. 
\subsection{Closed sets in finite dihedral root systems}  \lb{ss2.1} Throughout this section, we consider a finite  dihedral  Coxeter system $(W,S)$ of order $2m$ with simple roots $\Pi=\set{\a,\g}$. Then 
\[W=\set{1=1_{W}, s_{\a},s_{\g},s_{\a}s_{\g},s_{\g}s_{\a},s_{\a}s _{\g}s_{\a},\ldots,
w_{S}}\] where $w_{S}:= (s_{\a}s_{\g}s_{\a}\cdots)_{m}=(s_{\g}s_{\a}s_{\g}\cdots)_{m}$ is the longest element, and 
 \[\Phi_{+}=\mset{\a,s_{\a}(\g),s_{\a}s_{\g}(\a),\ldots}_{m}={}_{m}\set{...,s_{\g}s_{\a}(\g),s_{\g}(\a),{\g}}\] where each 
set on the previous line has $m$ elements, and elements    in corresponding positions in the two listed sets are equal. The order in which the elements are listed is one of the two  (mutually opposite) reflection orders of $\Phi_{+}$; they are the two possible orders   in which a ray sweeping around the origin (beginning and ending with a ray containing a negative root) would pass through the positive roots.
For example, in case $(W,S)$ is  of type $A_{2}$, we have $m=3$ and  $\Phi_{+}=\set{\a,\b,\g}$ where $\a=s_{\g}s_{\a}(\g)$,  $\b=\a+\g=s_{\a}(\g)=s_{\g}(\a)$ and $\g=s_{\a}s_{\g}(\a)$.

The closed subsets of $\Phi_{+}$ (in the general finite dihedral case) are the sets of positive roots  which may be obtained by deleting the first $j$ and last $k$  roots 
from the list 
\bee \a,s_{\a}(\g),s_{\a}s_{\g}(\a),\ldots, s_{\g}s_{\a}(\g),s_{\g}(\a),{\g}\eee  of elements of $\Phi_{+}$ in the above order, where $j,k\in \bbN$ with $j+k\leq n$.
The biclosed sets are the sets $\Phi_{w}$ for $w\in W$; they are the empty  set together with the subsets of $\Phi_{+}$ which 
are closed  and contain a simple root. 

\subsection{Stable subgroups and subsemilattices} \lb{ss2.2} In this subsection,   the stable subgroups and subsemilattices for the Galois connection associated to $R$ in \ref{ss1.9}  are described for the finite dihedral group $W$ (the analogous results for $R'$ can be obtained from this using the fact  that
 $\Phi'_{x}=\Phi_{xw_{S}}$).  Recall that $x  Rz $ if and only if  $z  (\Phi_{x })=\Phi_{x }$.
 Hence $1Rw$  and $wR1$ for all $w\in W$. Also, $wRw_{S}$ if and only if  $w=1$, and $w_{S}Rw $ if and only if  $w =1$.
 
   Let $z  ,x \in W\sm\set{ 1,w_{S}}$. We claim that $xRz$  if and only if  
  $z  =s_{\d}$ and $x =s_{\d} w_{S}$ for some $\d\in \Phi_{+}$ with $s_{\d}\neq w_{S}$. For suppose that $x  Rz $ holds. Then
 $z  $ must be a reflection (i.e. of odd length) since no non-identity rotation can fix any non-empty set  of positive roots (such as $\Phi_{x }$) setwise.  Suppose that $z  =s_{\d}$ where $\d\in \Phi_{+}$. There is some root $\e\in \Phi_{x }\cap \Pi$ since $x \neq 1$. It follows  that $\ol{\set{\e,s_{\d}(\e)}}\seq \Phi_{x }\seq \Phi'_{s_{\d}}$.
 But it is easy to check that the left hand side is just $\Phi'_{s_{\d}}$, so equality holds throughout. Hence $\Phi_{x }=\Phi'_{s_{\d}}=\Phi_{s_{\d}w_{S}}$ and $x =s_{\d}w_{S}$. On the other hand, $s_{\d}(\Phi_{s_{\d}w_{S}})=
 s_{\d}(\Phi'_{s_{\d}})=\Phi'_{s_{\d}}=\Phi_{s_{\d}w_{S}}$, and the claim is proved.
 
 It now follows from the definitions that the pairs $(G,G^{*})$ of  stable subgroup $G$ of $W$ and corresponding stable meet subsemilattice $G^{*}$ of $W$ are
 $(G=W,G^{*}=\set{1})$, $(G=\set{1_{W}},G^{*}=W)$ and $(G=\set{1,s_{\d}},G^{*}=\set{1,s_{\d}w_{S}})$ for $\d\in \Phi_{+}$ with $s_{\d}\neq w_{S}$. There are thus  $m+1$ stable pairs if $m$ is odd, and $m+2$ if $m$ is even; the map $w\mapsto \set{w}^{*}$ gives an bijection  from the set of elements $w\in W$ with $w^{2}=1_{W}$ to the set of stable subsemilattices. The lattice of stable subgroups (resp., stable subsemilattices) is a  poset  with $W$ as the maximum element, $\set{1_{W}}$ as the minimum element and all other elements pairwise incomparable.

\subsection{Standard parabolic weak orders} \lb{ss2.3}
This subsection describes the standard parabolic weak orders
$\CL_{J}$, where $J\seq S$, for the finite dihedral group $W$.  

First, the description of weak order is well known.  There is a maximum element
$w_{S}$, minimum element $1$, and exactly two maximal chains from $1$ to $w_{S}$, namely \bee 1<s_{\a}<s_{\a}s_{\g}<s_{\a}s_{\g}s_{\a}<\ldots <w_{S}
\text{ \rm  and }1<s_{\g}<s_{\g} s_{\a }<s_{\g}s_{\a}s_{\g}<\ldots <w_{S},\eee both of length $m$ (i.e. with $m+1$ elements).  The poset $\CL_{\eset}$ is order isomorphic to $(W,\leq)$ under 
the map $w\mapsto \Phi_{w}$.

The poset $\CL_J$ for $J=\set{s_{\a}}$ may be described as follows (the description for $J=\set{s_{\g}}$ is obtained  by symmetry). 
In this case, the poset $\CL_{J}$ has a maximum element $\top:=\Phi_{+}\cup\set{-\a}$ and a minimum element $\bot=\eset$. The group $\set{1,s_{\a}}$ acts by order automorphisms of the poset by $(w,\G)\mapsto w(\G)$.
There are $2n+4$ elements of $\CL_{J}$,  namely $\eset$,  $\Phi_{+}\sm\set{\a}$, $\set{\a,-\a}$, $\top$ and, for each $w\in W$ with $\a\in \Phi_{w}$, $\Phi_{w}$ and $s_{\a}(\Phi_{w})$.
There are   $6$ maximal chains from $\bot$ to $\top$. Three of the maximal chains are \bee \eset<\Phi_{s_{\a}}=\set{\a}<\Phi_{s_{\a}s_{\g}}<\
\Phi_{s_{\a}s_{\g}s_{\a}}<\ldots <\Phi_{w_{S}}=\Phi_{+}<\top\eee (of length $m+1$), $\eset<\set{\a} <\set{\a,-\a}<\top$  (of length $3$)
and $\eset<\Phi_{+}\sm\set{\a}<\Phi_{+}<\top$ (also of length $3$); the other three maximal chains are obtained by acting on these by $s_{\a}$.

Finally, we describe $\CL_{S}$. The group $W$ acts on $\CL(S)$ as a group of order automorphisms. The elements of $\CL_{S}$ are the minimum element $\bot=\eset$, maximal element $\top=\Phi$ and  elements
$\Psi_{+}$, $\Psi_{+}\sm\set{\d}$, $\Psi_{+}\cup\set{-\d'}$ where $\Psi_{+}$ runs over positive systems of $\Phi$ and for each $\Psi$, $\d$ and $\d'$ run over the simple roots of $\Psi_{+}$.  There are $6m+2$ elements of $\CL_{S}$ in all.
There are $8m$ maximal chains from $\eset$ to $\Phi$, all of length
$4$. They are exactly the chains of the form
$\eset<\Psi_{+}\sm\set{\d}<\Psi_{+}<\Psi_{+}\cup\set{-\d'}<\Phi$ for $\Psi$, $\d$, $\d'$  as above.   \begin{rem*} It will be shown elsewhere in conjunction with a proof of Conjecture \ref{ssinsect} for affine Weyl groups  that  if $W$ is any  finite Coxeter group, then  $\CL_{S}$ is an ortholattice with maximum element $\Phi$, minimum element $\eset$, and 
 in which  every maximal chain from $\eset$ to $\Phi$ has length $2\vert S\vert$. However, it has  not been  checked that the join is as conjectured in \ref{ss1.14}(a). \end{rem*} 
\subsection{The type $A_{2}$ case} \lb{ss2.4} In this subsection, we take $(W,S)$   of type $A_{2}$ with notation as in \ref{ss2.1} i.e. $\Pi=\set{\a,\g}$ and  $\Phi_{+}=\set{\a,\b,\g}$ where  $\b=\a+\g$.

The Hasse diagrams of the stable subgroups ordered by inclusion and 
corresponding stable subsemilattices ordered by reverse inclusion are as follows:
 
 \bee \xymatrix@!0{ &W\ar@{-}[dl]\ar@{-}[dr]&   &&&& 
  &{\set{1}}\ar@{-}[dl]\ar@{-}[dr]\\
 {\set{1,s_{\a}}}&&{\set{1,s_{\g}}} && &&
 {\set{1,s_{\g}s_{\a}}}&&{\set{1,s_{\a}s_{\g}}}\\
 &{\set{1}}\ar@{-}[ul]\ar@{-}[ur]&  &&&& &W\ar@{-}[ul]\ar@{-}[ur]& }\eee
 
The Hasse diagram of the parabolic order  $\CL_{J}$ where $J=\set{s_{\a}}$ is
\bee \xymatrix@!0{&&&\set{{\a,\b,\g,-\a}}\ar@{-}[dlll]\ar@{-}[d] \ar@{-}[drrr]&&&\\
{\set{\a,\b,\g}}\ar@{-}[d]&&&{\set{\a,-\a}}\ar@{-}[ddlll]\ar@{-}[ddrrr]&&&{\set{-\a,\g,\b}}\ar@{-}[d]\\
{\set{\a,\b}}\ar@{-}[d]&&&&&&{\set{-\a,\g}}\ar@{-}[d]\\
{\set{\a}}&&&{\set{\b,\g}}\ar@{-}[uulll]\ar@{-}[uurrr]&&&{\set{-\a}}\\
&&&\eset\ar@{-}[ulll]\ar@{-}[u] \ar@{-}[urrr]&&&}
\eee
Note that $\CL_{\set{s_{\a}}}$ is not graded;  maximal chains may have different cardinalities.

 The Hasse diagram of the  parabolic order  $\CL_{S}$  is of the form
\bee \xymatrix@!0{&&&&&\Phi\ar@{-}[dlllll]\ar@{-}[drrrrr]\ar@{-}[drrr]\ar@{-}[dlll]\ar@{-}[dr]\ar@{-}[dl]
&&&&&\\
{\set{\a,\b,\g,-\a}}\ar@{-}[d]\ar@{-}[drr]&&\bullet\ar@{-}[dll]\ar@{-}[drr]
&&\bullet\ar@{-}[dll]\ar@{-}[drr]&&\bullet\ar@{-}[dll]\ar@{-}[drr]
&&\bullet\ar@{-}[dll]\ar@{-}[drr]&&\bullet\ar@{-}[d]\ar@{-}[dll]\\
{\set{\a,\b,\g}}\ar@{-}[d]\ar@{-}[drr]&&\bullet\ar@{-}[dll]\ar@{-}[drr]
&&\bullet\ar@{-}[dll]\ar@{-}[drr]&&\bullet\ar@{-}[dll]\ar@{-}[drr]
&&\bullet\ar@{-}[dll]\ar@{-}[drr]&&\bullet\ar@{-}[d]\ar@{-}[dll]\\
{\set{\a,\b}}&&\bullet&&\bullet&&\bullet&&\bullet&&\bullet\\
&&&&&\eset\ar@{-}[ulllll]\ar@{-}[ulll]\ar@{-}[ul]\ar@{-}[ur]\ar@{-}[urrr]
\ar@{-}[urrrrr]&&&}\eee
where we have only explicitly  indicated the elements of one maximal chain.

  \section{Preliminaries} \lb{S3}
   \subsection{Finite biclosed subsets of the positive roots} \lb{ss3.1} Recall the  terminology concerning biclosed sets  from \ref{ss:term}.
 
\begin{lem*} \begin{num}\item  For $\G\seq \Phi_{+}$ and $x\in W$, set $x\cdot \G:=\bigl (\Phi_{x}\sm x(-\G)\bigr)\cup \bigl(x(\G)\sm(-\Phi_{x})\bigr)$.
Then  $(w,\G)\mapsto w\cdot \G$  gives an action of the group $W$ on the power set $\CP(\Phi_{+})$.
 \item $x\cdot \Phi_{y}=\Phi_{xy}$ and $x\cdot \Phi'_{y}=\Phi'_{xy}$ for $x,y\in W$.
\item  If $\G\subseteq \Phi_{+}$ is biclosed, then
 $w\cdot \G$ is biclosed  for all $w\in W$.
 \item A finite subset $\G$ of $\Phi_{+}$ is biclosed if and only if 
it is of the form $\G=\Phi_{w}$ for some $w\in W$.
\item  $\Phi_{xy}=\bigl(\Phi_{x}\sm(-x(\Phi_{y}))\bigr)\dotcup \bigl(x(\Phi_{y})\sm (-\Phi_{x})\bigr)$ where $\Phi_{x}\cap x(\Phi_{y})=\eset$, for any $x,y\in W$,. \item For $x,y\in W$, we have  $x\leq xy$ if and only if  $l(xy)=l(x) +l(y)$ if and only if   $\Phi_{x^{-1}}\cap \Phi_{y}=\emptyset$ if and only if 
$x(\Phi_{y})\seq \Phi_{+}$ if and only if 
$\Phi_{xy}=\Phi_{x}\dotcup x(\Phi_{y})$ if and only if  $\Phi_{x}\seq \Phi_{xy}$ if and only if  $x(\Phi_{y})\seq \Phi_{xy}$.\end{num}\end{lem*}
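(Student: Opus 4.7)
The plan is to prove (e) and (f) by direct sign analysis, then use a bijection between $\CP(\Phi_{+})$ and quasipositive subsets of $\Phi$ to handle (a)--(c), and finally derive (d) by induction on cardinality.

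For (e), I would fix $\alpha\in\Phi_{+}$ and split the condition $\alpha\in\Phi_{xy}\iff (xy)^{-1}(\alpha)\in\Phi_{-}$ on the sign of $\beta:=x^{-1}(\alpha)$: if $\beta\in\Phi_{+}$ then $\alpha\notin-\Phi_{x}$ and the condition reduces to $\beta\in\Phi_{y}$, placing $\alpha$ in $x(\Phi_{y})\sm(-\Phi_{x})$; if $\beta\in\Phi_{-}$ then $\alpha\in\Phi_{x}$, and $y^{-1}(\beta)\in\Phi_{-}$ rearranges to $-\beta\in\Phi_{+}\sm\Phi_{y}$, placing $\alpha$ in $\Phi_{x}\sm x(-\Phi_{y})$. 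Disjointness is immediate from $\Phi_{x}\seq x(\Phi_{-})$ and $x(\Phi_{y})\seq x(\Phi_{+})$. Part (f) then follows from (e) combined with the definition of weak order and the identity $|\Phi_{z}|=l(z)$: the partition $\Phi_{xy}=\Phi_{x}\dotcup x(\Phi_{y})$ holds iff $x(\Phi_{y})\cap(-\Phi_{x})=\eset$, which rewrites both as $x(\Phi_{y})\seq\Phi_{+}$ and as $\Phi_{x^{-1}}\cap\Phi_{y}=\eset$; the individual containments $\Phi_{x}\seq\Phi_{xy}$ and $x(\Phi_{y})\seq\Phi_{xy}$ become equivalent to these via length-counting.

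For (a)--(c), the cleanest framework is the bijection $\sigma\colon\CP(\Phi_{+})\to\CQ$, where $\CQ:=\mset{\Xi\seq\Phi\mid \Phi=\Xi\dotcup-\Xi}$ is the set of quasipositive subsets of $\Phi$, given by $\sigma(\G):=\G\cup-(\Phi_{+}\sm\G)$. Each $w\in W$ acts linearly on $\Phi$ and permutes $\CQ$; a direct check (splitting $\G$ along $\Phi_{x^{-1}}$) shows $\sigma(x\cdot\G)=x(\sigma(\G))$, which proves (a). Part (b) follows since $\sigma(\Phi_{y})=y(\Phi_{-})$ by direct verification, so $\sigma(x\cdot\Phi_{y})=xy(\Phi_{-})=\sigma(\Phi_{xy})$; the case of $\Phi'_{y}$ is identical using $\sigma(\Phi'_{y})=-\sigma(\Phi_{y})=y(\Phi_{+})$. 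For (c) the central claim is that $\G\seq\Phi_{+}$ is biclosed iff $\sigma(\G)$ is closed in $\Phi$: the backward direction is immediate, while the forward direction is a case analysis on the signs of $\alpha,\beta\in\sigma(\G)$ with $\gamma=a\alpha+b\beta\in\Phi$, $a,b\geq 0$. The equal-sign cases are direct from biclosedness; the mixed-sign case reduces to the dihedral reflection subgroup $\pair{s_{\alpha},s_{\beta}}$ and is easily verified in rank two. Once this equivalence is in hand, (c) follows because $\sigma(w\cdot\G)=w(\sigma(\G))$ and the linear $W$-action preserves $2$-closure.

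For (d), the direction $\Phi_{w}$ biclosed is classical: if $\alpha,\beta\in\Phi_{w}$ and $\gamma=a\alpha+b\beta\in\Phi$ with $a,b\geq 0$, then $w^{-1}(\gamma)$ is a non-negative combination of $w^{-1}(\alpha),w^{-1}(\beta)\in\Phi_{-}$, hence in $\Phi_{-}$, so $\gamma\in\Phi_{w}$; the same argument on $\Phi_{+}\sm\Phi_{w}$ completes biclosedness. The converse is by induction on $|\G|$, with base case $\G=\eset=\Phi_{1_{W}}$. The key auxiliary claim is that any nonempty finite biclosed $\G$ contains a simple root: take $\alpha\in\G$ of minimal height; if $\alpha\notin\Pi$, the standard fact that $\alpha=c\beta+\alpha'$ for some $\beta\in\Pi$, $c>0$, and $\alpha'\in\Phi_{+}$ of strictly smaller height forces, by closedness of $\Phi_{+}\sm\G$, at least one of $\beta,\alpha'$ to lie in $\G$, and minimality of height rules out $\alpha'$, hence $\beta\in\Pi\cap\G$. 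Given such $\beta$, the action formula collapses to $s_{\beta}\cdot\G=s_{\beta}(\G\sm\set{\beta})\seq\Phi_{+}$ (since $s_{\beta}$ permutes $\Phi_{+}\sm\set{\beta}$), which is biclosed by (c) and has cardinality $|\G|-1$; the inductive hypothesis gives $s_{\beta}\cdot\G=\Phi_{w}$ for some $w$, and (b) yields $\G=\Phi_{s_{\beta}w}$.

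The main obstacle is the rank-two mixed-sign case in the biclosure/closure correspondence of (c): this is where the two-dimensional character of $2$-closure genuinely enters, though the dihedral reduction turns it into a short finite check. Everything else is either sign bookkeeping, the ambient linear $W$-action on $\Phi$, or the height induction in (d).
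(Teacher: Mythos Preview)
Your argument is correct, but it follows a genuinely different route from the paper's.  The paper establishes (a) by transporting the well-known cocycle action $(w,A)\mapsto N(w)+wAw^{-1}$ on $\CP(T)$ (with $+$ denoting symmetric difference) across the bijection $\a\mapsto s_\a$, then deduces (b) from $\Phi_y=y\cdot\eset$ and (e)--(f) as formal consequences of (a)--(b).  You instead prove (e)--(f) first by direct sign-chasing, and handle (a)--(c) via the bijection $\sigma\colon\CP(\Phi_{+})\to\CQ$ with quasipositive systems and the linear $W$-action on $\Phi$.  The equivalence ``$\G$ biclosed in $\Phi_{+}$ iff $\sigma(\G)$ closed in $\Phi$'' that you isolate for (c) is a clean statement in its own right and ties in nicely with the quasiparabolic framework appearing later in the paper; the paper's proof of (c) reduces directly to maximal dihedral subgroups without passing through $\sigma$.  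For (d), both arguments are inductions on cardinality that first locate a simple root in a nonempty biclosed set; you use minimal height, the paper uses minimal length of $s_\a$.  The length version has the mild advantage of making sense even for root systems with linearly dependent simple roots (as allowed in the class of root systems considered), where ``height'' is not well-defined.

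One small imprecision: in the mixed-sign case of (c) you reduce to ``the dihedral reflection subgroup $\pair{s_\a,s_\b}$,'' but $\gamma=a\a+b\b$ lies in $\Phi\cap(\bbR\a+\bbR\b)$, which is the root system of the \emph{maximal} dihedral reflection subgroup containing $s_\a,s_\b$, not of $\pair{s_\a,s_\b}$ itself in general.  The fix is immediate (work with the $2$-plane $\bbR\a+\bbR\b$ rather than the subgroup), and your rank-two verification goes through unchanged.
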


 \begin{proof} There is a $W$-action $(w,A)\mapsto w\cdot A=N(w)+wAw^{-1}$ on $\CP(T)$  where \be \lb{eqcoc}  N(w)=N_{(W,S)}(w):=\mset{s_{\a}\mid \a\in \Phi_{w}}=\mset{t\in T\mid l(tw)<l(w)}\ee and $+$ denotes symmetric difference. This action was used  in \cite{DyHS1} and  \cite{DyHS2}, for instance; the fact that the formula gives an action follows from the cocycle property
 \be N(xy)=N(x)+xN(y)\ee for $x,y\in W$.
   The $W$-action on $\CP(\Phi_{+})$  in (a)  is easily seen to be obtained from this action by transfer of structure using the bijection $\a\mapsto s_{\a}\colon \Phi_{+}\rightarrow T$. For further discussion of this action and of its geometric interpretation, see \cite[1.1--1.2]{DyQuo}. 
 
  The formula in (a) immediately shows that $\Phi_{y}=y\cdot \eset$  and then
  $\Phi_{xy}=(xy)\cdot \eset=x\cdot(y\cdot \eset)=x\cdot \Phi_{y}$, proving the first part of (b). The second part of (b) follows since the formula in (a) implies that
  $x\cdot (\Phi_{+}\sm \G)=\Phi_{+}\sm (x\cdot \G)$.  Part (c) is proved by  reducing to the easily checked case of dihedral groups  by considering the intersections of $\G$ with the maximal dihedral reflection subgroups (see \ref{ss5.3}) of $W$;  see \cite[Proposition 2.6]{Edg}.
  Part (d) is proved in \cite{Pil} for the standard reflection representation of \cite{Bour} or \cite{Hum} by a straightforward  modification of a well known argument for finite Weyl groups. Exactly the same argument  as in \cite{Pil} applies to the class of root systems we consider here to establish (d).
 Another proof of (d). is as follows. Note that since $\emptyset$ is clearly biclosed, 
 so is $\Phi_{x}=x\cdot \eset$  for $x\in W$ by (b). The reverse implication will be proved 
 using the easily checked fact that for   any non-empty biclosed set  $\Delta'$,
 a root    $\alpha\in \D'$ with $s_{\a}$ of minimal length must be simple; then 
 $\vert s_{\a}\cdot \D'\vert+1=\vert\D'\vert$. Let  $\Gamma$ be any finite biclosed set and chose  $\Delta$ of minimal cardinality in the orbit
 $W\cdot \Gamma$.  If $\D\neq \eset$, applying  the above with $\D'=\D$ gives  a contradiction to minimality of $\vert \D\vert$. Hence  $\eset\in W\cdot \G$, $\Gamma\in W\cdot \emptyset$, and,  say $\Gamma=x\cdot \eset=\Phi_{x}$ as required.
 
For (e)--(f), note that the definitions give $\Phi_{x^{-1}}=-x^{-1}(\Phi_{x})$. Part (e) is  a straightforward consequence of (a)--(b)
 and the fact that $x^{-1}(\Phi_{x}\cap x(\Phi_{y}))=x^{-1}(\Phi_{x})\cap \Phi_{y}=-\Phi_{x^{-1}}\cap \Phi_{y}\seq \Phi_{+}\cap \Phi_{-}=\eset$.
 Then (f) follows easily from (e)   on recalling that $l(z)=\vert \Phi_{z}\vert$ for any $z\in W$.\end{proof}

  \ssect{A more general order isomorphism} \lb{ss1.6a} The following Lemma will be used  in Section \ref{S9} (compare also Corollary \ref{ss1.6}).
   \begin{lem*} \begin{num} \item For any $x\in W$, the map $\G\mapsto x\cdot \G$ induces an order isomorphism 
\bee 
\mset{\G\in \CB(\Phi_{+})\mid \G\cap \Phi_{x^{-1}}=\eset}\xrightarrow{\cong}
\mset{\D\in \CB(\Phi_{+})\mid\Phi_{x}\seq \D}.\eee
\item If a non-empty subset $X$ of $\CB(\Phi_{+})$ is such that
$\G\cap \Phi_{x^{-1}}=\eset$ for all $\G\in X$, and $X$ has a join $\L=\join X$ in
$\CB(\L)$, then $\L\cap \Phi_{x^{-1}}=\eset$. \end{num}
\end{lem*}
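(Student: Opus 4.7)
For part (a), the plan is to exploit the $W$-action $(w,\G)\mapsto w\cdot \G$ on $\CP(\Phi_+)$ introduced in Lemma \ref{ss3.1}, which by \ref{ss3.1}(c) restricts to an action on $\CB(\Phi_+)$. The map $\G\mapsto x\cdot \G$ is therefore already an order automorphism of $\CB(\Phi_+)$ (with inverse $\D\mapsto x^{-1}\cdot \D$), so it suffices to show that this bijection interchanges the two named subfamilies.

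First I would translate the hypothesis via the identity $\Phi_{x^{-1}}=-x^{-1}(\Phi_x)$: for $\G\seq \Phi_+$, the condition $\G\cap \Phi_{x^{-1}}=\eset$ is equivalent to $x(\G)\seq \Phi_+$. Under that hypothesis, $-x(\G)\seq \Phi_-$ while $\Phi_x\seq \Phi_+$, so both subtractions in the defining formula for $x\cdot \G$ remove nothing and the formula collapses to $x\cdot \G=\Phi_x\sqcup x(\G)$, which manifestly contains $\Phi_x$. For the reverse direction, given $\Phi_x\seq \D\in \CB(\Phi_+)$, the identity $x^{-1}(\Phi_x)=-\Phi_{x^{-1}}$ combined with $\Phi_x\seq \D$ forces the first term of the formula for $x^{-1}\cdot\D$ to vanish, leaving $x^{-1}\cdot \D=x^{-1}(\D\sm\Phi_x)\seq x^{-1}(\Phi'_x)=\Phi'_{x^{-1}}$, which is disjoint from $\Phi_{x^{-1}}$. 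This establishes the claimed bijective correspondence; order preservation is automatic because $\G\mapsto x\cdot \G$ is already an order automorphism of $\CB(\Phi_+)$.

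For part (b), my plan is to exhibit an explicit upper bound. By Lemma \ref{ss3.1}(d) the finite set $\Phi_{x^{-1}}$ is biclosed, so its complement $\Phi'_{x^{-1}}=\Phi_+\sm \Phi_{x^{-1}}$ is also biclosed in $\Phi_+$. By hypothesis every $\G\in X$ satisfies $\G\seq \Phi'_{x^{-1}}$, so $\Phi'_{x^{-1}}$ is an upper bound of $X$ in the ambient poset of biclosed sets. Hence the least upper bound $\L=\join X$, which is assumed to exist, satisfies $\L\seq \Phi'_{x^{-1}}$, i.e.\ $\L\cap \Phi_{x^{-1}}=\eset$.

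The only real obstacle lies in part (a): the sign bookkeeping needed to verify the ancillary identities such as $x^{-1}(\Phi_x)=-\Phi_{x^{-1}}$ and $x^{-1}(\Phi'_x)=\Phi'_{x^{-1}}$, and to confirm that each half of the symmetric-difference formula for $w\cdot\G$ collapses in the expected way. Once this is done, part (b) is essentially immediate from the biclosedness of $\Phi'_{x^{-1}}$ and does not even require part (a).
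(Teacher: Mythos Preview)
Your argument for (b) is correct and essentially identical to the paper's. For (a), the core computations are right and match the paper's explicit formulae $\G\mapsto \Phi_x\cup x(\G)$ and $\D\mapsto x^{-1}(\D\sm\Phi_x)$, but your framing contains a genuine error: the map $\G\mapsto x\cdot\G$ is \emph{not} an order automorphism of $\CB(\Phi_+)$ (nor of $\CP(\Phi_+)$) in general. For a simple root $\a$ one has $\eset\seq\{\a\}$ in $\CB(\Phi_+)$, yet $s_\a\cdot\eset=\Phi_{s_\a}=\{\a\}$ while $s_\a\cdot\{\a\}=\eset$, so the action reverses this inclusion. Thus your final sentence ``order preservation is automatic because $\G\mapsto x\cdot\G$ is already an order automorphism of $\CB(\Phi_+)$'' is false as stated.

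The fix is already implicit in what you wrote: once you have shown that on the domain the action simplifies to $x\cdot\G=\Phi_x\sqcup x(\G)$ and on the codomain to $x^{-1}\cdot\D=x^{-1}(\D\sm\Phi_x)$, these explicit formulae are visibly inclusion-preserving on their respective subfamilies, which is exactly how the paper argues (it works at the level of $\CP(\Phi_+)$ with these simplified maps and then restricts to $\CB(\Phi_+)$ using $W$-stability). So you should delete the global order-automorphism claim and instead deduce order preservation directly from the collapsed formulae you derived.
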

\begin{proof} 
The  map in (a) is the restriction of a similar order isomorphism with $\CB$ replaced by $\CP$. Explicitly, the inverse bijections are $\G\mapsto \D=\Phi_{x}\cup x(\G)$ and $\D\mapsto \G=x^{-1}(\G\sm \Phi_{x})$. One may put $\CB$ in place of $\CP$ since $\CB(\Phi_{+})\seq\CP(\Phi_{+})$ is $W$-stable.

For (b), one notes that for all $\G\in X$, one has $\G\seq \Phi'_{x^{-1}}\in \CB(\Phi_{+})$, so $\Phi'_{x^{-1}}$ is an upper bound for $X$ and therefore $\join X\seq \Phi'_{x^{-1}}$ i.e. $\L\cap \Phi_{x^{-1}}=\eset$.

\end{proof}

  \subsection{Trivial properties of closure} \lb{ss3.2} The following   assorted simple  facts are stated  for future reference,  omitting  the proofs. 
  \begin{lem*} Let $\G,\D\seq\Phi$ and  $w\in W$.    \begin{num}
  \item Recursively define  $\G_{0}:=\G$ and $\G_{n+1}=\cup_{\a,\b\in \G_{n}}\ol{\set{\a,\b}}$ for $n\in \bbN$.
  Then $\G_{0}\seq \G_{1}\seq \G_{2}\seq\ldots$ and $\ol{\G}=\cup_{n\in \bbN}\G_{n}$.  \item $\ol{w(\G)}=w(\ol{\G})$.
   \item $\ol{\G\cup\D}\sreq \ol{\G}\cup\D$ and  $\ol{\ol{\G}\cup\D}=\ol{\G\cup\D}$.
  \end{num}
  \end{lem*}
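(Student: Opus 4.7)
The plan is to derive each part directly from the definition of $\ol{\,\cdot\,}$ as the intersection of all $2$-closed subsets of $\Phi$ containing the given set, together with the fact that any intersection of $2$-closed sets is $2$-closed.

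For (a), I would first observe the monotonicity $\G_{n}\seq \G_{n+1}$: taking $\b=\a$ gives $\set{\a}\seq \ol{\set{\a,\a}}$, so every $\a\in \G_{n}$ lies in $\G_{n+1}$. Set $H:=\cup_{n\in\bbN}\G_{n}$; the task is to identify $H$ with $\ol{\G}$. The inclusion $H\seq \ol{\G}$ goes by induction on $n$: if $\G_{n}\seq \ol{\G}$, then for any $\a,\b\in \G_{n}$ the set $\ol{\set{\a,\b}}$, being the smallest $2$-closed set containing $\a$ and $\b$, lies inside the $2$-closed set $\ol{\G}$; hence $\G_{n+1}\seq \ol{\G}$. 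The reverse inclusion $\ol{\G}\seq H$ reduces to showing that $H$ is itself $2$-closed (since it plainly contains $\G=\G_{0}$): given $\a,\b\in H$, both lie in a common $\G_{N}$, so any root in $\bbR_{\geq 0}\a+\bbR_{\geq 0}\b$ belongs to $\ol{\set{\a,\b}}\seq \G_{N+1}\seq H$.

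For (b), I would note that because $W$ acts linearly on $V$ and permutes $\Phi$, the image $w(\Xi)$ of any $2$-closed $\Xi\seq \Phi$ is again $2$-closed, and $\Xi\mapsto w(\Xi)$ gives a bijection between the $2$-closed subsets of $\Phi$ containing $\G$ and those containing $w(\G)$. Intersecting both families yields $w(\ol{\G})=\ol{w(\G)}$.

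For (c), both assertions are formal consequences of monotonicity. For the first, $\ol{\G\cup \D}$ is a $2$-closed set containing both $\G$ and $\D$, hence contains $\ol{\G}$ and $\D$, and so contains $\ol{\G}\cup\D$. For the equality, the inclusion $\ol{\G\cup \D}\seq \ol{\ol{\G}\cup\D}$ follows by applying monotonicity to $\G\cup\D\seq \ol{\G}\cup\D$; the reverse inclusion uses what was just shown, namely $\ol{\G}\cup\D\seq \ol{\G\cup\D}$, so the closure of the left side is contained in $\ol{\G\cup\D}$. There is no real obstacle here: every part is a direct formal consequence of the definition of $\ol{\,\cdot\,}$. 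The only item with any substance is (a), where $2$-closedness of the increasing union $H$ must be verified, and this is immediate once one observes that any two elements of $H$ lie in a common $\G_{N}$.
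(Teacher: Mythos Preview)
Your argument is correct in all three parts; each step follows directly from the definition of $2$-closure as an intersection of closed sets, together with monotonicity and idempotence. The paper itself omits the proof entirely (it merely records these as ``assorted simple facts \ldots\ for future reference''), so there is no alternative approach to compare against; your write-up is a clean and complete verification of what the paper leaves to the reader.
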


 \section{Closure and joins}\lb{S4}
  Denote the join of a family of elements $\set{x_{i}}$ of $W$ in weak order as $\join_{i}x_{i}$ when it exists (which it may not). Similarly, denote the meet as $\meet_{i}x_{i}$ when it exists.   Also write $\join X$   and $\meet X$ for the join and meet of $X\seq W$ when they exist. 
  This section will  prove   Theorem \ref{ss1.5}(a), describing joins in weak order in terms of $2$-closure, and  show   that $(W,\leq)$ is a complete meet semilattice.

 \subsection{Joins in cosets of rank two standard parabolic subgroups} \lb{ss4.1} The  proof of Theorem \ref{ss1.5}(a) begins with the following observation.  
 \begin{lem*} Let $x\in W$ and $\alpha,\beta\in \Pi$ with $x<y:=xs_{\a}$ and $x<z:=xs_{\b}$.
 Then $y$ and $z$ have an upper bound in $(W,\leq)$ if and only if  the standard parabolic subgroup $W':=\mpair{s_{\alpha},s_{\beta}}$ of $W$ is finite if and only if  $\ol{\set{{\a},{\b}}}$ is finite.  In that case, let $w$ denote the longest element of $W'$. Then
 \begin{num}\item $y\vee z=xw$
 \item $\Phi_{xw}=\Phi_{x}\dotcup  x(\Phi_{w}) =\Phi_{x}\dotcup\overline{\set{x(\a),x(\b)}}$ \item $\Phi_{y\vee  z}=\Phi_{x}\dotcup\ol{(\Phi_{y}\sm \Phi_{x})\cup (\Phi_{z}\sm \Phi_{x})}=\ol{\Phi_{y}\cup \Phi_{z}}$.
\end{num}
 \end{lem*}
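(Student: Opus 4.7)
The plan is to reduce every assertion to direct applications of the formula $\Phi_{xy}=\Phi_{x}\dotcup x(\Phi_{y})$ (when $l(xy)=l(x)+l(y)$) from Lemma \ref{ss3.1}(f), the equivariance of closure $x(\ol{\G})=\ol{x(\G)}$ from Lemma \ref{ss3.2}(b), and two elementary dihedral verifications: that $\ol{\set{\a,\b}}$ (computed in $\Phi$) coincides with the positive roots of $W':=\mpair{s_{\a},s_{\b}}$ relative to its canonical simple system $\set{\a,\b}$, and that when $W'$ is finite with longest element $w$ one has $\Phi_{w}=\ol{\set{\a,\b}}$ (using that $\a,\b\in\Pi$ makes $W'$ a standard parabolic, so $w$ sends every positive root outside $\Phi_{W'}$ to another positive root). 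The first I would record from Lemma \ref{ss3.1}(f): $\Phi_{y}=\Phi_{x}\dotcup\set{x(\a)}$ and $\Phi_{z}=\Phi_{x}\dotcup\set{x(\b)}$; and since $x<xs_{\a}$ and $x<xs_{\b}$ say exactly that $\a,\b\in \Phi'_{x^{-1}}$, the closedness of $\Phi'_{x^{-1}}$ (Lemma \ref{ss3.1}) gives $\ol{\set{\a,\b}}\seq\Phi'_{x^{-1}}$.

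For the three-way equivalence, the equivalence of finiteness of $W'$ and of $\ol{\set{\a,\b}}$ is the dihedral fact recalled above (the number of positive roots of $W'$ is $|W'|/2$). If a common upper bound $v$ of $y,z$ exists in $W$, then $\Phi_{v}$ is closed and contains $x(\a),x(\b)$, hence contains $x(\ol{\set{\a,\b}})=\ol{\set{x(\a),x(\b)}}$, which is therefore finite. Conversely, suppose $W'$ is finite with longest element $w$. Then $\ol{\set{\a,\b}}\seq\Phi'_{x^{-1}}$ gives $\Phi_{w}\cap\Phi_{x^{-1}}=\eset$, so by Lemma \ref{ss3.1}(f) one has $l(xw)=l(x)+l(w)$ and $\Phi_{xw}=\Phi_{x}\dotcup x(\Phi_{w})$; since $\a,\b\in\Phi_{w}$, this contains both $\Phi_{y}$ and $\Phi_{z}$, so $y,z\leq xw$.

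For (a) it remains to show that every upper bound $v$ of $y,z$ dominates $xw$. As above, $\Phi_{v}$ is closed and $\Phi_{v}\sreq \Phi_{y}\cup\Phi_{z}=\Phi_{x}\cup\set{x(\a),x(\b)}$, so $\Phi_{v}\sreq \Phi_{x}\cup x(\ol{\set{\a,\b}})=\Phi_{xw}$ and thus $xw\leq v$. Part (b) is now immediate: the first equality is the identity $\Phi_{xw}=\Phi_{x}\dotcup x(\Phi_{w})$ just established, and the second uses $\Phi_{w}=\ol{\set{\a,\b}}$ together with $x(\ol{\set{\a,\b}})=\ol{\set{x(\a),x(\b)}}$. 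For (c), the formulas $\Phi_{y}\sm\Phi_{x}=\set{x(\a)}$ and $\Phi_{z}\sm\Phi_{x}=\set{x(\b)}$ and part (b) give the first equality; for the second equality the inclusion $\sreq$ is automatic, while $\seq$ amounts to asserting that $\Phi_{x}\cup\ol{\set{x(\a),x(\b)}}$ is closed, which follows because this set equals $\Phi_{xw}$ by (b) and $\Phi_{xw}$ is closed by Lemma \ref{ss3.1}.

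The main obstacle I anticipate is precisely this last point: unions of closed sets are not closed in general, so the identity $\ol{\Phi_{y}\cup\Phi_{z}}=\Phi_{x}\dotcup\ol{\set{x(\a),x(\b)}}$ is not a formal closure manipulation but a genuine assertion about the geometry of $\Phi$, and it succeeds only because $xw$ exists in $W$ and gives the union a name as $\Phi_{xw}$. Everything else is a routine dihedral verification (of the kind the paper authorizes to omit) combined with the already-established closedness and equivariance properties.
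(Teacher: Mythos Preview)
Your proof is correct and follows essentially the same route as the paper's: both reduce to the disjoint-union formula $\Phi_{xw'}=\Phi_{x}\dotcup x(\Phi_{w'})$, identify $\ol{\set{\a,\b}}$ with the positive roots of $W'$, and obtain (c) by sandwiching $\ol{\Phi_{y}\cup\Phi_{z}}$ between $\Phi_{x}\dotcup\ol{\set{x(\a),x(\b)}}$ and $\Phi_{xw}$. The only cosmetic difference is that the paper phrases the length-additivity via ``$x$ is the minimal-length element of $xW'$'' while you phrase it as $\ol{\set{\a,\b}}\seq\Phi'_{x^{-1}}$; these are equivalent by Lemma~\ref{ss3.1}(f).
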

 \begin{proof} 
 Since $x<xs_{\a}$ and $x<xs_{\b}$, it follows that $x$ is the (unique) element of minimal length
 in the coset $xW'$. Hence \bee l(xw')=l(x)+l(w'),\quad \Phi_{xw'}=\Phi_{x}\dotcup  x(\Phi_{w'}),\quad x\leq xw'\eee for all $w'\in W$. In particular, $\Phi_{y}=\Phi_{x}\dotcup \set{x(\a)}$ and 
  $\Phi_{z}=\Phi_{x}\dotcup \set{x(\b)}$.
 Also note that the set of positive roots of $W'$ is $\ol{\set{\a,\b}}$ so $W'$ is finite if and only if  
 $\ol{\set{\a,\b}}$ is finite, and in that case the longest element $w$ of $W'$ satisfies $\Phi_{w}=\ol{\set{\a,\b}}$ and \bee \Phi_{xw}=\Phi_{x}\dotcup 
 x\ol{\set{\a,\b}}=\Phi_{x}\dotcup  \ol{\set{x(\a),x(\b)}}.\eee 
 Finally, $u\in W$ is an upper bound of $y$ and $z$ if and only if  $\Phi_{y}\seq \Phi_{u}$ and $\Phi_{z}\seq\Phi_{u}$. This holds if and only if  $\Phi_{y}\cup\Phi_{z}\seq \Phi_{u}$ or, equivalently, if and only if  $\ol{\Phi_{y}\cup\Phi_{z}}\subseteq \Phi_{u}$, since $\Phi_{u}$ is closed.
 
 Now suppose that an upper bound of $y$ and $z$ exists, and let $u$ be any such upper bound.
 From above, it follows  that $\Phi_{x}\dotcup\ol{\set{x(\a),x(\b)}}\subseteq \ol{\Phi_{y}\cup\Phi_{z}}\seq\Phi_{u}$. In particular, $ \ol{\set{x(\a),x(\b)}}$ is finite, so $W'$ is finite, $\Phi_{xw}\subseteq \Phi_{u}$ and $xw\leq u$.  Hence if an upper bound for $y$ and $z$ exists, then $W'$ is finite, 
 and any upper bound $u$ satisfies $u\geq xw$. On the other hand, if $W'$ is finite, then the  above proves that
 $\Phi_{y},\Phi_{z}\subseteq \Phi_{xw}$, so  $xw$ is an upper bound of $y$ and $z$. The first part of this paragraph with $u=xw$  shows  that   $y\vee z=xw$ and
 \bee \Phi_{xw}=\Phi_{x}\dotcup\ol{\set{x(\a),x(\b)}}\subseteq \ol{\Phi_{y}\cup\Phi_{z}}\seq\Phi_{xw}.\eee This proves the first assertion of the Lemma, and its parts (a)--(b). 
 Part (c) follows from (a)--(b) and what has just been proved, 
 using $\Phi_{y}\sm \Phi_{x}=\set{x(\a)}$ and
  $\Phi_{z}\sm \Phi_{x}= \set{x(\b)}$.
 \end{proof}
 \subsection{Joins in weak order}\lb{ss4.2} The following proof is quite similar to that of \cite[Lemma 2.1]{BjEZ}, taking account of  the extra structure  of concern  here.
 \begin{prop*} Suppose that $x,y,z,u\in W$ with $x\leq y\leq u$ and $x\leq z\leq u$.
 Then $y\vee z$ exists and
$\Phi_{y\vee  z}=\Phi_{x}\dotcup\ol{(\Phi_{y}\sm \Phi_{x})\cup (\Phi_{z}\sm \Phi_{x})}=\ol{\Phi_{y}\cup \Phi_{z}}$.
\end{prop*}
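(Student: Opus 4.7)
The plan is to prove the proposition by induction on $n := l(u) - l(x)$. The base case $n = 0$ forces $x = y = z = u$, making both formulas trivial; and if $y = x$ or $z = x$, the conclusion is immediate because $\Phi_x \seq \Phi_z$ (resp.\ $\Phi_x \seq \Phi_y$) makes $\Phi_y \cup \Phi_z$ equal to the already-closed $\Phi_z$ (resp.\ $\Phi_y$) and reduces the second formula to $\Phi_z = \Phi_x \dotcup (\Phi_z \sm \Phi_x)$, where closedness of $\Phi_z \sm \Phi_x$ follows from Lemma \ref{ss3.1}(f) combined with Lemma \ref{ss3.2}(b). So I may assume $x < y$ and $x < z$, whence there exist simple roots $\alpha, \beta \in \Pi$ with $x < xs_{\alpha} \leq y$ and $x < xs_{\beta} \leq z$.

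If $\alpha = \beta$, the inductive hypothesis applies directly on $[\,xs_{\alpha}, u]$ with the pair $\set{y, z}$. If $\alpha \neq \beta$, Lemma \ref{ss4.1} provides a single element $v := xs_{\alpha} \vee xs_{\beta} = xw$ (with $w$ the longest element of $\mpair{s_{\alpha}, s_{\beta}}$), which satisfies $v \leq u$ and comes with the explicit formula $\Phi_v = \Phi_x \dotcup \ol{\set{x(\alpha), x(\beta)}}$. I then apply the inductive hypothesis three times: to the pair $\set{y, v}$ on $[\,xs_{\alpha}, u]$, to $\set{z, v}$ on $[\,xs_{\beta}, u]$, and finally to $\set{y \vee v, z \vee v}$ on $[v, u]$, each of which strictly decreases $n$. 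The key identity $y \vee z = (y \vee v) \vee (z \vee v)$ is then verified directly: any upper bound $m$ of $\set{y, z}$ dominates $xs_{\alpha}$ and $xs_{\beta}$, so $m \geq v$ by Lemma \ref{ss4.1}, whence $m$ dominates both $y \vee v$ and $z \vee v$; the converse is immediate.

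The closure formula is assembled by telescoping through Lemma \ref{ss3.2}(c): one gets $\Phi_{y \vee z} = \ol{\Phi_y \cup \Phi_z \cup \Phi_v}$, and since $\Phi_x \seq \Phi_y$, $x(\alpha) \in \Phi_y$, and $x(\beta) \in \Phi_z$, the explicit form of $\Phi_v$ from Lemma \ref{ss4.1}(b) gives $\Phi_v \seq \ol{\Phi_y \cup \Phi_z}$, so $\Phi_{y \vee z} = \ol{\Phi_y \cup \Phi_z}$. Writing $A := (\Phi_y \sm \Phi_x) \cup (\Phi_z \sm \Phi_x)$, the disjoint-union form follows from $\Phi_y \cup \Phi_z = \Phi_x \cup A$ (disjoint on the $A$ part), from the fact that $\Phi_+ \sm \Phi_x = \Phi_x'$ is closed by Lemma \ref{ss3.1}(d) (forcing $\ol{A} \cap \Phi_x = \eset$), and from the inductive-hypothesis assertion that $\Phi_{y \vee z} \sm \Phi_x$ coincides with $\ol{A}$.

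The principal obstacle is the inductive step when $\alpha \neq \beta$, since it requires Lemma \ref{ss4.1} to furnish a rank-two join $v$ with an explicit closure-theoretic description and requires three uses of the hypothesis to be knit together via the identity $y \vee z = (y \vee v) \vee (z \vee v)$. A subtle point is the rewriting as a disjoint union with $\Phi_x$: the purely set-theoretic identity $\ol{\Xi \cup C} = \Xi \cup \ol{C}$ for biclosed $\Xi$ and disjoint closed $C$ is not true in general, so this cannot be deduced \emph{post hoc} from $\Phi_{y \vee z} = \ol{\Phi_y \cup \Phi_z}$ alone but must be carried along in the induction, using at each step that $\Phi_{y \vee z} \sm \Phi_x$ is itself closed (again Lemma \ref{ss3.1}(d)(f)).
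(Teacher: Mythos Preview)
Your argument is correct and close in spirit to the paper's, but the inductive scheme is organized differently. The paper proceeds asymmetrically: it picks a single simple reflection $r$ with $x<xr\leq z$, sets $x'=xr$, and reduces the conclusion (A) for $(x,y,z,u)$ to the same statement (H) for $(x,y,x',u)$ by one application of the inductive hypothesis on $[x',u]$ (to the pair $\{y\vee x',\,z\}$); then it reduces (H) in turn to Lemma~\ref{ss4.1} by the identical manoeuvre with the roles of $y$ and $z$ swapped. So the paper uses two nested inductive calls and never needs to form the full rank-two join $v=xw$ as an auxiliary vertex. Your symmetric route via $v=xs_\alpha\vee xs_\beta$ costs a third inductive application but has the virtue of treating $y$ and $z$ on an equal footing; both reach the same telescoping identity $\Phi_{y\vee z}=\ol{\Phi_y\cup\Phi_z}$.

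One point in your write-up deserves tightening. You say the disjoint-union formula is ``the inductive-hypothesis assertion that $\Phi_{y\vee z}\sm\Phi_x$ coincides with $\ol A$,'' but the three inductive calls give the disjoint-union formula with bases $xs_\alpha$, $xs_\beta$, and $v$, not $x$. The passage down to base $x$ does work, using exactly the ingredient you name (that $\Phi_{y\vee z}\sm\Phi_x$ is closed since $x\leq y\vee z$), but it requires a short chase: from the third call one has $\Phi_{y\vee z}\sm\Phi_x=(\Phi_v\sm\Phi_x)\dotcup\ol C$ with $C=(\Phi_{y\vee v}\sm\Phi_v)\cup(\Phi_{z\vee v}\sm\Phi_v)$; one checks $\Phi_v\sm\Phi_x=\ol{\{x(\alpha),x(\beta)\}}\seq\ol A$, and then uses the first two calls (with bases $xs_\alpha$, $xs_\beta$) to see $C\seq\ol A$, whence $\ol A=(\Phi_v\sm\Phi_x)\dotcup\ol C$. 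The paper handles the analogous bookkeeping by writing the whole chain~\eqref{comp} of inclusions and then showing its endpoints coincide, forcing equality throughout.
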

\begin{rem*}  The more complicated statement here as compared to the case $\vert X\vert=2$ of \ref{ss1.5}(a)  is only  to facilitate the proof. The two statements are   equivalent,  using Corollary \ref{ss1.6}(a).
It can be shown that Conjecture \ref{ss1.14} implies, for example, that for $\L,\G,\D\in \CB(\Phi_{+})$ with
$\L\seq \G\cap \D$, one has $\ol{\G\cup \D}=\L\cup\ol{(\G\sm \L)\cup (\D\sm \L)}$.
 \end{rem*}
  \begin{proof} Observe that the union $ \Phi_{x}\cup\ol{(\Phi_{y}\sm \Phi_{x})\cup (\Phi_{z}\sm \Phi_{x})}$ is one of disjoint sets since $\ol{(\Phi_{y}\sm \Phi_{x})\cup (\Phi_{z}\sm \Phi_{x})}
\subseteq\ol{ \Phi_{x}'}= \Phi_{x}'$. Note that if  $x=y$, then $y\vee z=x\vee z=z$ and the result is trivial since $\Phi_{z}$ is closed. Similarly, the result is trivial if $x=z$. In particular, the result holds if $l(x)=l(u)$
  (in which case $x=y=z=u$). The Proposition  will be proved by induction on $N:=l(u)-l(x)$.
  Assume inductively that $N\in \bbN_{>0}$ and the assertion of the Proposition holds for all $x,y,z,u$ satisfying the hypotheses of  the Proposition with $l(u)-l(x)<N$. Let $x,y,z,u$ satisfy the hypotheses  with $l(u)-l(x)=N$. As above,  assume  without loss of generality that $x\neq y$ and $x\neq z$.
  
Fix a simple reflection $r$ satisfying  $x<x':=xr\leq z$. Consider the following hypothesis \eqref{H}:
\bee \tag{H}\label{H}\text{ $y':=y\vee x'$ exists and $\Phi_{y'}=\Phi_{x}\dotcup\ol{(\Phi_{y}\sm \Phi_{x})\cup (\Phi_{x'}\sm \Phi_{x})}=\ol{\Phi_{y}\cup \Phi_{x'}}$.}\eee 
We claim that \eqref{H} implies the conclusion \eqref{A} of the Proposition:
\bee\tag{A}\label{A} \text{$y\vee z$ exists and
$\Phi_{y\vee  z}=\Phi_{x}\dotcup\ol{(\Phi_{y}\sm \Phi_{x})\cup (\Phi_{z}\sm \Phi_{x})}=\ol{\Phi_{y}\cup \Phi_{z}}$.}\eee

Assume for the proof of the claim that \eqref{H} holds. Since $y\leq u$ and $x'\leq z\leq u$, it follows that 
$y'=y\vee x' \leq u$. Hence there is  a diagram indicating some of the order relations  in $(W,\leq)$ as follows:

   \bee \xymatrix@!0{&&{u}&\\
   &{y'}\ar@{-}[ur]&&{z}\ar@{-}[ul]\\
   {y}\ar@{-}[ur]&&{x'}\ar@{-}[ur]\ar@{-}[ul]&\\
   &{x}\ar@{-}[ur]\ar@{-}[ul]&&}\eee 
  Compute   
\be\label{comp}   \begin{split} \G&:=\ol{\Phi_{y}\cup \Phi_{z}}\\
   &\,\sreq\,\ol{(\Phi_{y}\sm \Phi_{x})\cup (\Phi_{z}\sm \Phi_{x})}\cup \Phi_{x}
   \\&= \ol{(\Phi_{y}\sm \Phi_{x})\cup (\Phi_{x'}\sm \Phi_{x})\cup (\Phi_{z}\sm \Phi_{x'})}\cup \Phi_{x}
    \\&= \ol{(\Phi_{y'}\sm \Phi_{x'})\cup (\Phi_{x'}\sm \Phi_{x})\cup (\Phi_{z}\sm \Phi_{x'})}\cup \Phi_{x}\\
    &\,\sreq\, \ol{(\Phi_{y'}\sm \Phi_{x'})\cup (\Phi_{z}\sm \Phi_{x'})} \cup (\Phi_{x'}\sm \Phi_{x})\cup \Phi_{x}\\
   & =\ol{(\Phi_{y'}\sm \Phi_{x'})\cup (\Phi_{z}\sm \Phi_{x'})} \cup \Phi_{x'} =:\D\end{split} \ee
    where we use \eqref{H} and Lemma \ref{ss3.2}(c) (resp., Lemma  \ref{ss3.2}(c)) to get the fourth (resp., second and fifth) line.
    Since $l(u)-l(x')=N-1<N$, the inductive hypothesis implies that $y'\vee z$ exists
    and $\D=\Phi_{y'\vee z}$. Now $\D\supseteq \Phi_{y'}\cup \Phi_{z}\,\sreq\,\Phi_{y}\cup \Phi_{z}$.  Hence $\D=\ol{\D}\,\sreq \,\ol{\Phi_{y}\cup\Phi_{z}}=\G$ and so the containments in \eqref{comp} are 
    all equalities. Clearly,
    $y'\vee z$ is an upper bound for $\set{y,z}$. On the other hand, if $v$ is any upper bound for 
  $\set{y,z}$, then $\Phi_{y}\cup\Phi_{z}\,\seq\, \Phi_{v}$ so $\Phi_{y'\vee z}=\G=\,\ol{\Phi_{y}\cup\Phi_{z}}\,\seq\, \Phi_{v}$ which implies $y'\vee z\leq v$. This shows that $y\vee z$ exists and in fact that  $y\vee z=y'\vee z$.  From \eqref{comp} and $\G=\D$, it follows that \eqref{A} holds.
  This proves the claim that \eqref{H} implies \eqref{A}.
  
  Hence we are reduced to proving \eqref{H}.
   Fix a simple reflection $s$ with $x<xs\leq y$. Consider the following hypothesis \eqref{H'}:
\bee \tag{H$'$}\label{H'}\text{ $xs\vee xr$ exists and $\Phi_{xs\vee xr}=\Phi_{x}\dotcup\ol{(\Phi_{xr}\sm \Phi_{x})\cup (\Phi_{xs}\sm \Phi_{x})}=\ol{\Phi_{xr}\cup \Phi_{xs}}$.}\eee 
 Since  $x<xr\leq u$, $x<y\leq u$ and $l(u)-l(x)\leq N$, replacing $(x,y,z,u,r)$ by
 $(x,xr,y,u,s)$ in the above proof that \eqref{H} implies \eqref{A} shows that
 \eqref{H'} implies \eqref{H}.  Hence it remains  only to prove \eqref{H'}. But \eqref{H'} follows from
 Lemma \ref{ss4.1}, and  so the proof of the Proposition is complete.
   \end{proof}
   \ssect{Meets from joins}\lb{ss4.3} The  following well known simple    facts are used in the argument to show that the weak order on $W$ is a complete meet semilattice.
   \begin{lem*}  Let $(\L,\preceq)$ be a  poset with a minimum element, denoted $\bot$,
   such that for every $x\in \L$, $ \mset{w\in W\mid w\preceq x}$ is finite.
   Assume that any two elements of $\L$ with an upper bound  have a least upper bound. 
   Then $\L$ is a complete meet semi-lattice i.e.  any non-empty subset of $\L$ has a greatest lower bound. Further, any non-empty subset with an upper bound has a least upper bound. \end{lem*}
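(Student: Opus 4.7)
The plan is to reduce everything to two claims and prove them in sequence: first, that every non-empty subset of $\L$ possessing an upper bound is automatically finite and admits a least upper bound; second, that the meet of an arbitrary non-empty subset exists because it coincides with the join of the set of lower bounds, to which the first claim applies. The key observation is that the finiteness hypothesis on principal order ideals, combined with the pairwise-join hypothesis, upgrades to a join statement for finite (and hence arbitrary bounded) subsets, while the duality of meets as joins of lower bounds handles the rest.

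For the first claim, I would argue as follows. If $Y\seq \L$ is non-empty with upper bound $u$, then $Y\seq \mset{w\in \L\mid w\preceq u}$, which is finite by hypothesis; so $Y$ itself is finite. I would then proceed by induction on $\vert Y\vert$. The case $\vert Y\vert =1$ is trivial. For the inductive step, write $Y=Y'\dotcup\set{y}$ with $Y'$ non-empty; by induction $Y'$ has a join $y'$, which satisfies $y'\preceq u$. Since $\set{y',y}$ admits $u$ as upper bound, the hypothesis furnishes $y'\vee y$, and it is then routine to verify that this is $\join Y$.

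For the meet, I would fix a non-empty $X\seq \L$ and any $x_{0}\in X$, and consider the set of lower bounds \bee Y:=\mset{z\in \L\mid z\preceq x\text{ for all }x\in X}.\eee Then $\bot\in Y$ so $Y\neq \eset$, and $x_{0}$ is an upper bound for $Y$. By the first claim, $m:=\join Y$ exists in $\L$. Crucially, each element $x\in X$ is itself an upper bound for $Y$ (every $z\in Y$ is, by definition, $\preceq x$), so $m\preceq x$ for all $x\in X$; hence $m\in Y$. Thus $m$ is the maximum element of $Y$, i.e.\ the greatest lower bound $\meet X$ of $X$.

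I do not expect a serious obstacle. The only point requiring care is the verification that $m=\join Y$ is itself a lower bound of $X$, which is automatic once one notices that every $x\in X$ belongs to the set of upper bounds of $Y$. The ``further'' assertion about non-empty subsets with an upper bound is then exactly the content of the first claim already proved.
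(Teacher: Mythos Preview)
Your proposal is correct and follows essentially the same approach as the paper: first show by induction that any finite (hence any bounded) non-empty subset has a join, then obtain the meet of an arbitrary non-empty subset $X$ as the join of its set of lower bounds, using that each element of $X$ bounds that set above to conclude the join is itself a lower bound. The paper's proof is organized identically, with only cosmetic differences in exposition.
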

   \begin{proof}  The assumptions imply (by induction on $\vert B\vert$) that the join $\join B$ exists for any finite non-empty subset $B$ of $\L$ with an upper bound. Since any non-empty subset with an upper bound is finite, it has a least upper bound, proving the last assertion.
   Consider now any non-empty subset $A$ of $\L$.
  We have to show  the  greatest lower bound $\wedge A$ exists.
   Consider the  set $B$ of lower bounds of $A$. For any $a\in A$, we have $b\leq a$ for all $b\in B$, so $B$ is finite, non-empty (it contains $\bot$) and bounded above. It follows $B$ has a join $b:=\join B$.
   For any $a\in A$, we have that $a$ is an upper bound of $B$ and so $b\leq a$.
    Hence $b$ is a lower bound of $A$ i.e. $b\in B$. This implies that $b$ is the maximum element of $B$ i.e. $b=\meet A$ as required. \end{proof}

   \subsection{The semi-lattice property of weak order} \lb{ss4.4} The Corollary below  summarizes the  main results of this section.    \begin{cor*}\begin{num}
   \item $(W,\leq)$ is a complete meet   semilattice.   \item If a   subset  of $W$ has an upper bound in $W$, then it has a join in $(W,\leq)$ given by the formula in $\text{\rm Theorem \ref{ss1.5}(a)}$.
   \item If $W$ is finite, then the meet in $W$ is given by the formula in  
   $\text{\rm Theorem \ref{ss1.5}(b)}$.\end{num} \end{cor*}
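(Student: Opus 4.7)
The plan is to obtain all three parts by combining Proposition \ref{ss4.2} (joins for two elements sitting in a bounded interval), Lemma \ref{ss4.3} (the general criterion for being a complete meet semilattice), and, in the finite case, the order-reversing involution $x \mapsto xw_S$. Since $1_W \le w$ for every $w \in W$, specializing Proposition \ref{ss4.2} to $x = 1_W$ already shows: if $y, z \in W$ share any upper bound $u$, then $y \vee z$ exists and
\[ \Phi_{y\vee z} = \ol{\Phi_y \cup \Phi_z}. \]
This is the engine for part (b), and together with the fact that $\{w \in W \mid w \le u\}$ is finite, it yields part (a) via Lemma \ref{ss4.3}.

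For (b), let $X \seq W$ have an upper bound $u$. Since $X \seq \{w \mid w \le u\}$ and the latter is finite, $X$ is finite, and one proceeds by induction on $|X|$. For $|X| = 1$ the claim is trivial. For $|X| \ge 2$, pick $x_0 \in X$ and let $X' = X \sm \{x_0\}$. By induction $y' := \join X'$ exists with $\Phi_{y'} = \ol{\cup_{x \in X'} \Phi_x}$, and $u$ is an upper bound for both $y'$ and $x_0$, so the pairwise case above gives $y := y' \vee x_0$ with
\[ \Phi_y = \ol{\Phi_{y'} \cup \Phi_{x_0}} = \ol{\,\ol{\cup_{x \in X'}\Phi_x} \cup \Phi_{x_0}\,} = \ol{\cup_{x \in X}\Phi_x}, \]
the last equality by Lemma \ref{ss3.2}(c). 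A routine check shows $y$ is actually the join of $X$ (any upper bound $v$ of $X$ is an upper bound of $X'$ and of $x_0$, hence $\ge y$). This proves (b).

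For (a), I apply Lemma \ref{ss4.3} to $(W,\le)$: $W$ has minimum $1_W$; each principal order ideal $\{w \le x\}$ is finite by \ref{ss1.3}; and any two elements with an upper bound have a join by (b) (or directly by Proposition \ref{ss4.2} with $x = 1_W$). The lemma then gives that $W$ is a complete meet semilattice.

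Finally, for (c), assume $W$ is finite with longest element $w_S$, and recall from \ref{ss1.2} that $\Phi'_x = \Phi_{xw_S}$. The map $\iota\colon W \to W$, $x \mapsto xw_S$, is an order-reversing involution since $\Phi_x \seq \Phi_y$ iff $\Phi'_y \seq \Phi'_x$ iff $\Phi_{yw_S} \seq \Phi_{xw_S}$. Therefore $\iota$ interchanges meets and joins, so for any non-empty $X \seq W$ with $y := \meet X$, the element $yw_S = \join \iota(X)$ exists (using part (b) and the fact that $w_S$ is an upper bound for all of $W$), and
\[ \Phi'_y \;=\; \Phi_{yw_S} \;=\; \ol{\cup_{x \in X}\Phi_{xw_S}} \;=\; \ol{\cup_{x \in X}\Phi'_x}, \]
which is the formula in Theorem \ref{ss1.5}(b). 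The only step that requires any care is the finiteness reduction in (b); everything else is a short packaging of the preceding results.
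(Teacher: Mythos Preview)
Your proof is correct and follows essentially the same approach as the paper: specialize Proposition~\ref{ss4.2} to two elements, extend by induction to finite (hence arbitrary bounded) subsets using Lemma~\ref{ss3.2}(c), apply Lemma~\ref{ss4.3} for (a), and for (c) use the order-reversing involution $x\mapsto xw_S$ together with $\Phi'_x=\Phi_{xw_S}$ to convert the meet formula into the already-established join formula.
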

  \begin{proof} The join of a family of two elements (with an upper bound) is given by \ref{ss1.5}(a).
  Then the formula in \ref{ss1.5}(a) follows for finite subsets $X$  by  induction on $\vert X\vert$. 
  The formula in \ref{ss1.5}(a) then applies to any non-empty subset $X$ with an upper bound, since such a set $X$ is finite, proving (b).  Part (a) now  follows from Lemma \ref{ss4.3}.
   To prove (c),  recall that (assuming $W$ finite)  the longest  element $w_{S}$ of $W$ satisfies   $w_{S}\Phi_{+}=-\Phi_{+}$, $w_{S}^{2}=1_{W}$ and  $\Phi_{x}'=\Phi_{xw_{S}}$ for  all $x\in W$. Further, the map $x\mapsto xw_{S}$ is an order-reversing bijection of $W$ with itself.  (See for instance \cite[2.3.1, 2.3.2, 3.2.2]{BjBr} for these well-known facts).
      Now Theorem \ref{ss1.5}(b) for finite $W$ follows easily from Theorem \ref{ss1.5}(a) using these facts.
   \end{proof}
   
   \begin{rem*} If $X=\set{x,y}\seq W$ and  $\join X$ exists, a similar (essentially, dual) argument to the proof of Proposition  \ref{ss4.2} shows that the meet $\meet X$ is given by the formula in \ref{ss1.5}(b); in particular, this argument can be extended to give another proof of Corollary \ref{ss4.4}(c). However, $X$ need not have a join, so one needs a different argument to prove \ref{ss1.5}(b) in general. \end{rem*}

\ssect{Proof of Corollary \ref{ss1.6}}
 Corollary \ref{ss1.6}(a) is an easy consequence of the definitions (cf. \cite[Proposition 3.1.6]{BjBr}). Using Lemma \ref{ss3.1}(f),  \ref{ss1.6}(b) says that the domain $D_{x}$ of the order isomorphism in (a) is closed under taking those joins which exist in  $W$. 
This  holds since $D_{x}:=\mset{u\in W\mid  \Phi_{u}\seq \Phi'_{x^{-1}}}$, so if $U\seq D_{x}$,
then \bee \Phi_{y}=\ol{\cup_{u\in U}\Phi_{u}}\seq\ol{\cup_{u\in U}\Phi'_{x^{-1}}}=\Phi'_{x^{-1}}\eee
by Theorem \ref{ss1.5}(a) and therefore $y\in D_{x}$.

\section{Closure and Chevalley-Bruhat order} \lb{S5}
This section proves Lemma \ref{ss1.7} after giving requisite background on the Bruhat graph and reflection subgroups. \subsection{Bruhat graph}\lb{ss5.1} Define an edge-labelled, directed graph called  the Bruhat graph $\Omega=\Omega_{(W,S)}$ of $(W,S)$ as follows (see \cite{DyBru}).
The vertex set  of $\Omega$ is $W$. There  is an edge $(x,y)$ from $x$ to $y$ for each
$x,y\in W$ such that $l(x)<l(y)$ and  $y=s_{\alpha}{x}$  for some (necessarily unique) $\alpha\in \Phi_{+}$. Let $E=E_{(W,S)}$ denote the set of edges of $\Omega$.
Endow $\Omega$ with an edge labelling by attaching to the edge $(x,y)\in E$ as above the label
$L_{x,y}:=\a$ where $\a\in \Phi_{+}$ with $y=s_{\a}x$.
For any subset $V$ of $W$,  let $\Omega(V)$ denote the full edge-labelled subgraph of 
$\Omega$ on vertex set $V$ (i.e. with edge set $E\cap (V\times V)$).

\subsection{Bruhat order}\lb{ss5.2} The Chevalley-Bruhat order,  which is denoted  here  either as $\leq_{\emptyset}$ to distinguish it from weak order or as $\leq_{W,\emptyset}$ to indicate dependence on $W$,
 is the partial order on $W$ defined by the condition that $x\leq_{\eset} y$ if and only if  there is $n\in \bbN$ and  a path of length $n$
 from $x$ to $y$ in $E$ i.e. there exist $x=x_{0},x_{1},\ldots, x_{n}=y$ in $W$ such that
 $(x_{i-1},x_{i})\in E$ for $i=1\ldots, n$). Write $[x,y]_{\eset}= [x,y]_{W,\eset}:=\mset{z\in W\mid x\leq_{\eset}z\leq_{\eset}y}$.
 
 Note that  the Hasse diagram of Chevalley-Bruhat order, when regarded as directed graph with edges $(x,y)$ for $x,y\in W$ with $x<_{\eset}y$ and $l(y)=l(x)+1$, is a subgraph of $\Omega$. Then $\Phi_{x,1}$ (resp., $\Phi_{x,-1}$) as defined in \ref{ss1.7} is the set of labels in $\Omega$ of edges of the directed Hasse diagram with $x$ as initial (resp., terminal) vertex, corresponding to the elements which cover (resp., which are covered by) $x$ in the order $\leq_{\eset}$.

\subsection{Maximal dihedral reflection subgroups}\lb{ss5.3} From  \cite{DyRef}, any reflection subgroup $W'$ of $W$ (i.e. a subgroup $W'=\mpair{W'\cap T}$) has a canonical set of Coxeter generators \be \chi(W')=\mset{t\in T\mid N(t)\cap W'=\set{t}}\ee (with respect to the  simple reflections $S$ of $W$). Always consider $W'$ as Coxeter group with simple reflections  $\chi(W')$, unless otherwise stated. Recall that  $W'$ has a root system
$\Phi_{W'}:=\mset{\a\in \Phi\mid s_{\a}\in W}$ (in the class of  root systems considered in \cite{Sd}) with positive roots $\Phi_{+}\cap \Phi_{W'}$ and simple roots $\Pi_{W'}:=\mset{\a\in \Phi_{+}\mid s_{\a}\in \chi(W')}$.
The maximal dihedral reflection subgroups  of $W$ are the dihedral  reflection subgroups
(i.e. those generated by two distinct reflections) which are maximal under inclusion amongst the dihedral reflection subgroups; equivalently, they are the reflection subgroups $W'$ 
with $\vert \Pi_{W'}\vert =2$ and $ \Phi_{W'}=\Phi\cap \bbR \Pi_{W'}$
(see \cite[Remark 3.2]{DyBru}). Let $\CM$ be the set of all maximal dihedral reflection subgroups. For $\a\in \Phi$, let $\CM_{\alpha}:=\mset {W'\in \CM\mid s_{\a}\in W'}$.

\ssect{Reflection subgroups and the Bruhat graph}\lb{ss5.4} The following Lemma collects some  basic facts about cosets of   reflection subgroups in relation to the Bruhat graph. \begin{lem*} \begin{num}
\item  Let $W'$ be a reflection subgroup of $W$, $S':=\chi(W')$. For any $x\in W$, there is a unique element $u=x'_{W'}$ of $W'x$ of minimal length $l(u)$.
For any $w\in W'$, one has $N_{(W,S)}(wu)\cap W'=N_{(W',S')}(w)$. 
The map $z\mapsto zu$ induces an isomorphism of edge-labelled directed graphs
$\Omega_{(W',S')}\xrightarrow{\cong}\Omega_{(W,S)}(W'x)$. \item Let $(x,y)\in E$ and $\a=L(x,y)$.
For any $W'\in \CM_{\a}$, let $u=x'_{W'}$ denote the element of minimal length in $W'x=W'y$,
$x_{W'}:=xu^{-1}\in W'$ and   $y_{W}:=yu^{-1}=s_{\a}x_{W'}\in W'$.
Then $x_{W'}\leq_{W',\eset}y_{W'}$ and the map $z\mapsto zu$ induces an isomorphism of edge-labelled directed graphs 
$\Omega_{W'}([x_{W'},y_{W'}]_{W',\eset})\rightarrow \Omega_{W}([x,y]_{W,\eset}\cap xW')$.
\item If $(x,y)\in E$, $L(x,y)=\a$  and $l(y)-l(x)\geq 3$, there is some $W'\in \CM$  such that
$l_{W'}(y_{W'})-l_{W'}(x_{W'})\geq 3$ where $l_{W'}$ is the length function on $(W',\chi(W'))$.
\end{num}
\end{lem*}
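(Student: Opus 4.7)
The plan is to establish (a), (b), (c) in turn, using throughout the cocycle identity $N(xy)=N(x)+xN(y)x^{-1}$ (with $+$ denoting symmetric difference, as in the proof of Lemma \ref{ss3.1}) together with the fundamental identity $N_{(W,S)}(w)\cap W'=N_{(W',S')}(w)$ for $w\in W'$ from Dyer's theory of reflection subgroups. For part (a), the element $u$ of minimum length in $W'x$ is characterized by $N(u)\cap W'=\eset$, which yields uniqueness: any $t\in W'\cap T$ in $N(u)$ would produce $tu\in W'x$ with $l(tu)<l(u)$. For the $N$-set identity, apply the cocycle formula to $wu$ for $w\in W'$; since $w$ normalizes $W'\cap T$ and $N(u)\cap W'=\eset$, the conjugate $wN(u)w^{-1}$ is disjoint from $W'$, so $N(wu)\cap W'=N(w)\cap W'=N_{(W',S')}(w)$. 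The graph isomorphism $\Omega_{(W',S')}\to \Omega_{(W,S)}(W'x)$ is given by $z\mapsto zu$: it is a bijection on vertices, any edge of $\Omega_{(W,S)}$ between vertices in $W'x$ has its label in $\Phi_{W'}\cap \Phi_{+}$ (the implementing reflection lies in $W'$), and for an edge $(z_1,z_2)\in E_{(W',S')}$ with $z_2=s_\beta z_1$, the inequality $l(z_1 u)<l(z_2 u)$ follows from $s_\beta\in N_{(W',S')}(z_2)=N(z_2 u)\cap W'$.

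For part (b), $s_\alpha\in W'$ forces $W'x=W'y$, so both cosets share the same minimum representative $u$; then $x_{W'},y_{W'}\in W'$ with $y_{W'}=s_\alpha x_{W'}$. The hypothesis $l(x)<l(y)$ gives $s_\alpha\in N(y)\cap W'=N_{(W',S')}(y_{W'})$, hence $l_{W'}(x_{W'})<l_{W'}(y_{W'})$. For the interval correspondence, the forward direction follows from part (a) by transport of Bruhat paths; the reverse direction relies on the compatibility of Bruhat order with reflection subgroups (Dyer), namely that for $v_1,v_2\in W'x$, $v_1\leq_{\eset}v_2$ in $W$ if and only if $v_1 u^{-1}\leq_{\eset}v_2 u^{-1}$ in $W'$.

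For part (c), exploit the graded nature of Bruhat intervals. Any saturated Bruhat chain from $x$ to $y$ has length $l(y)-l(x)\geq 3$; its first step provides an element $z$ with $l(z)=l(x)+1$, $x<_{\eset}z\leq_{\eset}y$, and $z\notin\set{x,y}$. Write $z=tx$ for a reflection $t\in T$, and take $W'$ to be the maximal dihedral reflection subgroup containing $s_\alpha$ and $t$; then $W'\in \CM_\alpha$ and $z\in W'x$ since $t\in W'$. By (b), the interval $[x,y]_{W,\eset}\cap W'x$ is order-isomorphic to $[x_{W'},y_{W'}]_{W',\eset}$; the former contains the three distinct elements $x,z,y$, so the latter has at least three elements. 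In a dihedral Coxeter group, a Bruhat interval $[a,b]$ with $l(b)-l(a)\leq 1$ contains at most two elements, so $l_{W'}(y_{W'})-l_{W'}(x_{W'})\geq 2$; since $y_{W'}=s_\alpha x_{W'}$, this difference is odd and thus at least $3$.

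The main obstacle is the reverse direction of the interval correspondence in (b): this requires the nontrivial compatibility of Bruhat order on $W$ with Bruhat order on its reflection subgroups, a deeper result from the theory of Coxeter groups. All remaining steps are structural or direct consequences of the cocycle identity for $N$.
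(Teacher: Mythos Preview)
Your proof is correct. For parts (a) and (b), you are essentially writing out the arguments that the paper merely cites from \cite{DyRef} and \cite{DyBru}; in particular you correctly isolate the Bruhat-order compatibility for reflection subgroups as the nontrivial input in (b).

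For part (c), your route differs from the paper's main one. You construct a specific $W'\in\CM_\alpha$ by choosing a cover $z$ of $x$ along a saturated Bruhat chain from $x$ to $y$, taking the maximal dihedral containing $s_\alpha$ and the reflection $t$ with $z=tx$ (noting $t\neq s_\alpha$ since $z\neq y$), and observing that $[x,y]_{W,\eset}\cap W'x$ then contains at least three elements, so the dihedral length-difference, being odd and at least $2$, is at least $3$. The paper instead establishes the counting identity
\[
l(y)-l(x)-1=\sum_{W'\in \CM_{\alpha}}\bigl(l_{W'}(y_{W'})-l_{W'}(x_{W'})-1\bigr),
\]
in which each summand is a non-negative even integer by part (b), so a left side of at least $2$ forces some summand to be at least $2$. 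Your argument is shorter and constructive; the paper's identity takes more to set up but says more, controlling all of $\CM_\alpha$ simultaneously, and recurs elsewhere in the same form. The paper also alludes to an ``ad hoc'' proof in \cite{DyBru}, which may well be close to yours.
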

\begin{proof} Part (a) is  from \cite{DyRef} and \cite{DyBru}, and (b) follows from (a)
(cf. also \cite[(1.4)]{DyHS2}). An ad hoc proof of (c) is given in \cite{DyBru}. A more natural  argument for (c) is to note that in the  following  identity, which holds  for $(x,y)\in E$ with  $y=s_{\a}x$,  $\a\in \Phi_{+}$,
\be \lb{eq5.4.1} l(y)-l(x)-1=\sum_{W'\in \CM_{\a}}(l_{W'}(y_{W'})-l_{W'}(x_{W'})-1)\ee  the left hand side and terms on the right are even elements of $\bbN$
since  $y_{W'}=s_{\a}x_{W'}>_{W',\eset}x_{W'}$.  This identity and  argument  are given in a  more general context as \cite[(1.2.1) and  (2.7)--(2.8)]{DyHS2}. 
A simple direct proof of \eqref{eq5.4.1} in the special situation here can be given as follows. Since $\alpha\in \Phi_{+}$,  it follows that $\Phi_{+}\sm\set{\a}=\dot \cup_{W'\in \CM_{\a}}(\Phi_{W',+}\sm\set{\a} )$.
Since  $\a\in \Phi_{y}$, (a) implies that 
 \begin{multline*} l(y)-1=\vert \Phi_{y}\sm \set{\a}\vert =\sum_{W'\in \CM_{\a}}\vert (\Phi_{y}\sm \set{\a})\cap \Phi_{W'<+}\vert\\ =\sum_{W'\in \CM_{\a}}\vert \Phi_{y_{W'},{W'}}\sm \set{\a}\vert=
\sum_{W'\in \CM_{\a}}(l_{W'}(y_{W'})-1)\end{multline*} Similarly, since $\a\not\in \Phi_{x}$
 \bee l(x)=\vert \Phi_{x}\vert =\sum_{W'\in \CM_{\a}}\vert \Phi_{x}\cap \Phi_{W',+}\vert\\ =\sum_{W'\in \CM_{\a}}\vert \Phi_{x_{W'},{W'}}\vert =
\sum_{W'\in \CM_{\a}}l_{W'}(x_{W'})\eee and \eqref{eq5.4.1} follows on subtracting.
\end{proof}

\subsection{Closure and the Bruhat graph (proof)}
\begin{proof}[of Lemma \ref{ss1.7}]   For  any $\G\seq\Phi_{+}$, write 
${\widehat{\G}}=\cup_{\a,\b\in \G}\ol{\set{\a,\b}}$. Recall that $\Phi_{x,n}=\eset$ for even $n$. By Lemma \ref{ss3.2}(a),  it will suffice to prove the following two assertions for any $x\in W$ and $n\in \bbN$:

\begin{num} \item Let $\G\seq\Phi_{x}'$.
Then ${\widehat{\G}}\cap \Phi_{x,1}=\G \cap \Phi_{x,1}$, and  if $\G\sreq \cup_{j\in \bbN_{\leq n}}\Phi_{x,2j+1}$,
then ${\widehat{\G}}\sreq \cup_{j\in \bbN_{\leq n+1}}\Phi_{x,2j+1}$
 \item Let $\G\seq\Phi_{x}$.
Then ${\widehat{\G}}\cap \Phi_{x,-1}=\G \cap \Phi_{x,-1}$, and   if $\G\sreq \cup_{j\in \bbN_{\leq n}}\Phi_{x,-(2j+1)}$,
then ${\widehat{\G}}\sreq \cup_{j\in \bbN_{\leq n+1}}\Phi_{x,-(2j+1)}$\end{num}
The proof of both parts is by reduction to the case of dihedral groups using Lemma \ref{ss5.4}; the proof  is given only for (a), since that of (b) is entirely similar.

Let $\G\seq \Phi'_{x}$. Obviously,  $\G\seq {\widehat{\G}}$ implies that  $\G\cap \Phi_{x,1}\seq{\widehat{\G}} \cap \Phi_{x,1}$.
For the reverse inclusion, suppose $\a\in ({\widehat{\G}}\sm\G) \cap \Phi_{x,1}$.
Then $\a=c\b+d\g$ where $c,d\in \bbR _{\geq 0}$  and $\b,\g\in \G$.
Since $\a\not \in \G$, it follows that $c,d>0$ and $\b \neq \g$.
There is $W'\in \CM$ with $\Phi_{W'}=\Phi\cap(\bbR  \b+\bbR \g)$. In fact, $W'\in \CM_{\a}$. 
 For $\d\in \set{\a,\b,\g}$, we have  $l(s_{\d}x)>l(x)$, and so  from Lemma \ref{ss5.4}(b),  $l_{W'}(s_{\d}xu)>l_{W'}(xu)$ where $u:=x_{W'}^{\prime-1}$. Suppose notation is  chosen so that
 $l_{W'}(s_{\b}xu)\leq l_{W'}(s_{\g}xu)$.
From the well-known descriptions of  dihedral groups and their root systems, one checks that the
above conditions imply that $l_{W'}(s_{\a}xu)> l_{W'}(s_{\b}xu)$
and hence there is a path of non-zero  length  from $s_{\b}xu$ to $s_{\a}xu$ in $\Omega_{W'}$.
From \ref{ss5.4}(b) again, it follows that there is a path of non-zero length from $s_{\b}x$ to $s_{\a}x$ in $\Omega$
and so $l(x)+1=l(s_{\a}x)> l(s_{\b}x)\geq l(x)+1$, a contradiction which completes the proof that
$\G\cap \Phi_{x,1}={\widehat{\G}} \cap \Phi_{x,1}$.

To prove the second part of (a), take $\G\seq \Phi'_{x}$ with $\G\sreq \cup_{j\in \bbN_{\leq n}}\Phi_{x,2j+1}$. Let $\a\in \Phi_{x,2n+3}$ i.e. $\alpha\in \Phi_{+}$ with $l(s_{\a}x)=l(x)+(2n+3)$. It will suffice to show that $\a\in {\widehat{\G}}$.
Since  $l(s_{\a}x)-l(x)\geq 3$,  Lemma \ref{ss5.4}(c) implies that there exists $W'\in \CM_{\a}$ such that  
$l_{W'}(s_{\a}xu)-l_{W'}(xu)\geq 3$ where $u:=x_{W'}^{\prime-1}$. Now there are  distinct  roots $\beta,\gamma\in \Phi_{W',+}$ such that $l_{W'}(s_{\b}xu)=l_{W'}(s_{\g}xu)=l_{W'}(xu)+1$.
Again using  the descriptions of dihedral groups and their  root systems, one checks that $\a\in \bbR _{>0}\b+\bbR _{>0}\g$ and that there are paths of non-zero lengths in $\Omega_{W'}$ from  $s_{\b}xu$ to $s_{\a}xu$  and from 
 $s_{\g}xu$ to $s_{\a}xu$.
 By  Lemma \ref{ss5.4}(b) again, there are paths in $\Omega$ of non-zero length  from
 $s_{\b}x$ to $s_{\a}x$ and from $s_{\g} x$ to $s_{\a}x$.
 Hence $l(x)<l(s_{\b} x)<l(s_{\a}x)=l(x)+2n+3$ and 
 $l(x)<l(s_{\g} x)<l(s_{\a}x)=l(x)+2n+3$. This implies that $\b,\g\in  \cup_{j\in \bbN_{\leq n}}\Phi_{x,2j+1}\seq \G$ and so $\a\in \ol{\set{\b,\g}}\seq {\widehat{\G}}$ as claimed.\end{proof}

 \section{Closure and meets} \lb{S6}
 In this section, Theorem \ref{ss1.5}(b) is deduced from the more general statement Theorem \ref{ss6.1}, which is itself a special case of Conjecture \ref{ss1.14}.
   \ssect{Cofinite closures of coclosed sets are biclosed}
 \lb{ss6.1}
 \begin{thm*} Let $\G$ be any coclosed subset of $\Phi_{+}$ such that $\ol{\G}$ has finite complement in $\Phi_{+}$. Then $\ol{\G}$ is biclosed i.e $\ol{\G}=\Phi_{x}'$ for some
 $x\in W$.\end{thm*}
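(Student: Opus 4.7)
The plan is to set $\Delta := \Phi_{+}\sm\overline{\Gamma}$, which is finite by hypothesis, and to prove that $\Delta$ is closed in $\Phi_{+}$. Since $\overline{\Gamma}$ is already closed, this will make $\Delta$ a finite biclosed subset of $\Phi_+$, so by Lemma~\ref{ss3.1}(d) there is $x\in W$ with $\Delta=\Phi_{x}$, whence $\overline{\Gamma}=\Phi'_{x}$ is biclosed. To check closedness of $\Delta$, I would fix $\beta,\gamma\in\Delta$ and $\xi=a\beta+c\gamma\in\Phi_{+}$ with $a,c\geq 0$, and aim to show $\xi\in\Delta$. If $\beta,\gamma$ are linearly independent, the plane they span equals $\mathbb{R}\Phi_{W'}$ for the unique $W'\in\CM$ with $\Phi_{W'}=\Phi\cap(\mathbb{R}\beta+\mathbb{R}\gamma)$, and then $\beta,\gamma,\xi\in\Phi_{W',+}$; in the linearly dependent case I would pick any $W'\in\CM_{\beta}$, and again $\beta,\gamma,\xi$ lie in the rank-two subsystem $\Phi_{W',+}$. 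Consequently the task reduces to showing, for each $W'\in\CM$, that $\Delta_{W'}:=\Delta\cap\Phi_{W',+}$ is closed in $\Phi_{W',+}$.

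Fix such $W'$ and set $\Gamma_{W'}:=\Gamma\cap\Phi_{W',+}$. Since $\Phi_{W',+}$ is itself $2$-closed in $\Phi_{+}$, the set $\Phi_{W',+}\sm\Gamma_{W'}=(\Phi_{+}\sm\Gamma)\cap\Phi_{W',+}$ is the intersection of two closed subsets and so is closed in $\Phi_{W',+}$, meaning $\Gamma_{W'}$ is coclosed there. The key technical step I would verify is the commutation
\[
 \overline{\Gamma}\cap\Phi_{W',+}\;=\;\overline{\Gamma_{W'}},
\]
with the right-hand closure taken in $\Phi_{W',+}$. The inclusion $\supseteq$ is immediate; for $\subseteq$, the plan is to expand $\overline{\Gamma}=\bigcup_{n}\Gamma_{n}$ using the iterative description of Lemma~\ref{ss3.2}(a) and induct on $n$, invoking the observation that whenever $\xi=a\eta+c\zeta\in\Phi_{W',+}$ with $\eta,\zeta\in\Phi_{+}$ linearly independent one has $\mathbb{R}\eta+\mathbb{R}\zeta=\mathbb{R}\Phi_{W'}$ and hence $\eta,\zeta\in\Phi_{W',+}$, with the linearly dependent case treated by the same $\CM_{\beta}$ trick as above. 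Granted this commutation, $\Delta_{W'}$ is precisely the complement in $\Phi_{W',+}$ of the $2$-closure of the coclosed set $\Gamma_{W'}$, and is finite since contained in $\Delta$.

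It then suffices to verify the theorem in the dihedral case: for dihedral $W'$, any coclosed subset of $\Phi_{W',+}$ with cofinite $2$-closure has biclosed $2$-closure. This is a direct case analysis using that the closed subsets of $\Phi_{W',+}$ are the intervals in a reflection order. Either $\Phi_{W',+}\sm\Gamma_{W'}$ is an initial or final section of the reflection order, in which case $\Gamma_{W'}$ is itself a final or initial section, already biclosed and equal to $\overline{\Gamma_{W'}}$; or $\Phi_{W',+}\sm\Gamma_{W'}$ is a middle interval avoiding both simple roots of $W'$, so that $\Gamma_{W'}\supseteq\Pi_{W'}$ and hence $\overline{\Gamma_{W'}}\supseteq\overline{\Pi_{W'}}=\Phi_{W',+}$, again biclosed. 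The cofiniteness hypothesis is automatic when $W'$ is finite and, in the infinite dihedral case, excludes the few remaining degenerate subcases. The hard part of the whole argument is really the planar confinement underlying the commutation $\overline{\Gamma}\cap\Phi_{W',+}=\overline{\Gamma_{W'}}$; once that is pinned down, the dihedral check is of exactly the routine dihedral type the paper elsewhere leaves to the reader.
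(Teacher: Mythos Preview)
Your argument hinges on the ``planar confinement'' step
\[
\overline{\Gamma}\cap\Phi_{W',+}=\overline{\Gamma\cap\Phi_{W',+}},
\]
and specifically on the assertion that if $\xi=a\eta+c\zeta\in\Phi_{W',+}$ with $\eta,\zeta\in\Phi_{+}$ linearly independent and $a,c>0$, then $\bbR\eta+\bbR\zeta=\bbR\Phi_{W'}$. This is false: a single nonzero vector $\xi$ lies in many $2$-planes, and the plane $\bbR\eta+\bbR\zeta$ need not agree with $\bbR\Phi_{W'}$. Concretely, in type $A_{3}$ with simple roots $\alpha_{1},\alpha_{2},\alpha_{3}$, take
\[
\Gamma=\{\alpha_{1},\alpha_{3},\alpha_{1}+\alpha_{2},\alpha_{2}+\alpha_{3}\}.
\]
Then $\Phi_{+}\sm\Gamma=\{\alpha_{2},\alpha_{1}+\alpha_{2}+\alpha_{3}\}$ is closed (the two roots are orthogonal and their nonnegative span contains no other root), so $\Gamma$ is coclosed. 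One computes $\overline{\Gamma}=\Phi_{+}\sm\{\alpha_{2}\}$, which has finite complement. Now let $W'\in\CM$ be the maximal dihedral with $\Phi_{W',+}=\{\alpha_{2},\alpha_{1}+\alpha_{2}+\alpha_{3}\}$ (type $A_{1}\times A_{1}$). Then $\Gamma\cap\Phi_{W',+}=\eset$, but $\overline{\Gamma}\cap\Phi_{W',+}=\{\alpha_{1}+\alpha_{2}+\alpha_{3}\}$, so the commutation fails. The root $\alpha_{1}+\alpha_{2}+\alpha_{3}$ enters $\overline{\Gamma}$ via the pair $\alpha_{1},\alpha_{2}+\alpha_{3}\in\Gamma$, neither of which lies in $\Phi_{W'}$. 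Thus your inductive step breaks down, and the commutation identity you rely on is genuinely false under the hypotheses of the theorem.

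The paper's proof avoids this issue entirely: it inducts on $n=\vert\Phi_{+}\sm\overline{\Gamma}\vert$, uses Lemma~\ref{ss1.7}(b) to find a simple root $\alpha\in\Pi\sm\Gamma$, shows $\Gamma':=s_{\alpha}\cdot\Gamma$ is again coclosed with $\vert\Phi_{+}\sm\overline{\Gamma'}\vert<n$, obtains $\overline{\Gamma'}=\Phi'_{s_{\alpha}x}$ by induction, and then proves $\overline{\Gamma}=\Phi'_{x}$ by verifying $\Phi_{x,1}\seq\Gamma$ via a Bruhat-graph argument using the $Z$-property. The reduction to dihedral groups in that proof is of a different nature (via $\CM_{\alpha}$ for specific roots $\alpha$, not a global intersection formula), and does not require the false commutation.
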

 \begin{proof} The following trivial fact is used below. If $v$ is a non-minimum, non-maximum element of a dihedral reflection subgroup $W'$ in its  Chevalley-Bruhat order $\leq _{W',\eset}$, then one may
write $\Pi_{W'}=\set{\a',\b'}$ where $s_{\a'}v<_{W',\eset}v<_{W',\eset}s_{\b'}v$.

 The Theorem is proved  by induction on $n:=\vert \Phi_{+}\setminus \ol{\G}\vert$.
  If $n=0$, then $\ol{\G}=\Phi_{+}$ is obviously biclosed.
 Suppose next that $n>0$ i.e $\ol{\G}\neq \Phi_{+}$. By Lemma \ref{ss1.7}(b), there is $\a\in \Pi\sm\G$.
 Abbreviate $s:=s_{\a}$.
 We claim  that  $\G'=s \cdot \G$ is coclosed; this follows by a similar reduction to the dihedral case  as that for Lemma \ref{ss3.1}(c) (note $\a\not\in \G$ is essential this time, since it is so in the dihedral case). 
Since $\a\not\in \G\seq \Phi_{+}$, it follows that $\a\not \in \ol{\G}$.  Hence \bee\ol{\G'}=\ol{\set{\a}\cup s (\G)}\sreq \set{\a}\cup\ol{ s (\G)}=\set{\a}\cup s (\ol{\G}).\eee This shows that the map
$\beta\mapsto s (\beta)$ indices an injection $\Phi_{+}\setminus \ol{\G'}\rightarrow
\Phi_{+}\setminus(\ol{\G}\dotcup\set{\a})$ and hence that $\vert \Phi_{+}\sm \ol{\G'}\vert <\vert \Phi_{+}\sm\ol{ \G}\vert$. By induction, there exists $x\in W$ such that $\ol{\G'}=\Phi'_{s x}$. Since $\a\in \ol{\G'}$, this implies that $\a\in \Phi'_{s x}$, $\a\not \in \Phi_{s x}$ and $\a\in \Phi_{x}$ i.e. $l(s x)<l(x) $. To prove the theorem, it will be shown that $\ol{\G}=\Phi_{x}'$.

By Lemma \ref{ss1.7}(b),  ${\G'}\sreq
\Phi_{s x,1}$ and it will suffice   to show that ${\G}\sreq \Phi_{x,1}$. Let $\b\in \Phi_{x,1}$ i.e. $\beta\in \Phi_{+}$ with $l(s_{\b}x)=l(x)+1$. Let $z:=s_{\b}x$.  By the Z-property of Bruhat order (see \cite{DyHS2}, for example) and the fact each length two Bruhat interval has $4$ elements, the situation is as in one of the following two diagrams, which show   vertices,  edges and edge labels appearing in paths (all paths from $sx$ to $z$ if $l(sz)<l(z)$, or from $sx$ to $sz$ if $l(sz)>l(z)$) in   the Bruhat graph $\Omega$.
\bee \xymatrix{ &{z}&&&& &{sz}&&\\
{sz}\ar@{->}[ur]^{\a}&&
{x}\ar@{->}[ul]_{\b}&&&
{sy}\ar@{->}[ur]^{s(\g)}&&
{z}\ar@{->}[ul]_{\a}\\
&{sx}\ar@{->}[ul]^{s(\b)}\ar@{->}[ur]_{\a}&&&&
{y}\ar@{->}[u]^{\a}\ar@{->}[urr]^(.75){\g}&&
{x}\ar@{->}[u]_{\b}\ar@{->}[ull]_(.75){\d}\\
&&&&&&
{sx}\ar@{->}[uuu]^(.3){s(\b)}\ar@{->}[ul]^{s(\d)}\ar@{->}[ur]_{\a}&} \eee

Consider first the case that $l(sz)<l(z)$. Then $s(\beta)\in \Phi_{sx,1}\sm\set{\a}\seq\G'\sm\set{\a}=s(\G)$, so $\b\in \G$ as desired in this case.
Consider now the contrary case that $l(sz)>l(z)$. Let $W'\in \CM$ with $\Phi_{W'}=\Phi\cap(\bbR  \a+\bbR \b)$. 
 Multiplying the vertex labels of this diagram by $u:=x_{W'}^{\prime-1}$ on the right gives a corresponding
diagram in $\Omega_{W'}$. One necessarily has $l_{W'}(szu)=l_{W'}(sxu)+3$. 

Since $\a\in \Pi_{W'}$, inspecting the vertex $xu$ of the resulting diagram and using the trivial fact at the start of the proof  shows that either $\Pi_{W'}=\set{\a,\d}$ or $\Pi_{W'}=\set{\a,\b}$.
The first case can't occur since then $syu$ would be the longest element of $W'$, which it isn't since it is the initial vertex of an edge $(syu,szu)$.  Therefore,  $\Pi_{W'}=\mset{\a,\b}$ and $\d\not \in \Pi_{W'}$.  Now $\G\cap \Phi_{W',+}$ is coclosed in $\Phi_{W',+}$, and 
 from the first case, it follows that $\d\in \G\cap \Phi_{W',+}$. By examining the possible coclosed sets  in dihedral groups, one sees that this  implies that either 
  $\a\in \G\cap \Phi_{W',+}$ (which is false here since $\a\not\in \G$) or $\b\in \G\cap \Phi_{W',+}$.
  Hence $\b\in \G$ whether $l(sz)<l(z)$ or $l(sz)>l(z)$. This shows that $\Phi_{x,1}\seq \G$ and completes the proof.\end{proof}

 \subsection{Closure and meets (proof)}
 \begin{proof}[of Theorem \ref{ss1.5}(b)] To prove Theorem \ref{ss1.5}(b), it will suffice to show that $\ol{\D}$ is biclosed, where 
 Let $X$ be a non-empty subset of $W$ and 
 $\D:=\cup_{x\in X}\Phi'_{x}$. Note that $\ol{\D}$  has finite complement 
 in $\Phi_{+}$ (as $\D$ itself does, since $\vert X\vert \geq 1$).
  Also,  $\cup_{x\in X}\Phi'_{x}$ is a union of biclosed sets, so it is coclosed.
 and
  therefore it  is of the form $\ol{\D}=\Phi'_{y}$  for some $y\in W$
 by Theorem \ref{ss6.1}.  One may check that $y=\meet X$ in weak order as follows.
 For any $x\in X$, $\Phi'_{x}\seq \ol{\D}=\Phi_{y}'$ implies $\Phi_{y}\subseteq \Phi_{x}$, so
 $y$ is a lower bound for $X$. On the other hand, if $z$ is any lower bound of $X$  then $\Phi_{z}\seq \Phi_{x}$ for all $x\in X$,
 so $\D\subseteq \Phi'_{z}$. Taking closures, $\Phi'_{y}=\ol{\D}\seq \Phi'_{z}$
 so $\Phi_{z}\seq \Phi_{y}$ and $z\leq y$. Hence $y=\meet X$ and \ref{ss1.5}(b)  follows.
 \end{proof}
 \ssect{Closure of the union of a finite biclosed set  and a cofinite biclosed set} \label{ss6.2} The following Corollary is another special case of Conjecture  \ref{ss1.14}(a).
  \begin{cor*} Let $x,y\in W$. Then $\ol{\Phi_{x}\cup \Phi'_{y}}=\Phi_{z}'$ for some $z\in W$. The element $z$ is the maximum element in weak order of the set $\mset{w\in W\mid w\leq y, l(x^{-1}w)=l(x)+l(w)}$.
  \end{cor*}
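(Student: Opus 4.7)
The plan is to deduce this directly from Theorem \ref{ss6.1} by taking $\G:=\Phi_{x}\cup \Phi'_{y}$. To invoke that theorem I need two things: that $\G$ is coclosed, and that $\ol{\G}$ has finite complement in $\Phi_{+}$. Both are essentially immediate. For coclosedness, the complement of $\G$ in $\Phi_{+}$ is $\Phi'_{x}\cap \Phi_{y}$, which is an intersection of two closed sets by Lemma \ref{ss3.1} and hence closed. For the cofiniteness of $\ol{\G}$, note that $\Phi_{+}\sm \ol{\G}\seq \Phi_{+}\sm \Phi'_{y}=\Phi_{y}$, which is finite. Thus Theorem \ref{ss6.1} produces $z\in W$ with $\ol{\G}=\Phi'_{z}$, establishing the first claim.

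To identify $z$ with the asserted maximum, I would translate each defining condition of the set $M:=\mset{w\in W\mid w\leq y,\; l(x^{-1}w)=l(x)+l(w)}$ into a containment of biclosed sets. Applying Lemma \ref{ss3.1}(f) with $(x,y)$ replaced by $(x^{-1},w)$ and using $\Phi_{(x^{-1})^{-1}}=\Phi_{x}$, the length equality is equivalent to $\Phi_{x}\cap \Phi_{w}=\eset$, i.e.\ $\Phi_{x}\seq \Phi'_{w}$. Meanwhile $w\leq y$ is equivalent to $\Phi'_{y}\seq \Phi'_{w}$. Therefore $w\in M$ if and only if $\Phi_{x}\cup \Phi'_{y}\seq \Phi'_{w}$, and since $\Phi'_{w}$ is closed, this is in turn equivalent to $\Phi'_{z}=\ol{\Phi_{x}\cup \Phi'_{y}}\seq \Phi'_{w}$, i.e.\ $w\leq z$.

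It remains to check $z\in M$, which follows at once from the trivial inclusions $\Phi_{x}\seq \ol{\G}=\Phi'_{z}$ and $\Phi'_{y}\seq \ol{\G}=\Phi'_{z}$: the first gives $l(x^{-1}z)=l(x)+l(z)$ and the second gives $z\leq y$. Combined with the previous paragraph, every element of $M$ is $\leq z$ and $z\in M$, so $z=\max M$ in weak order.

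All the real work has already been done in Theorem \ref{ss6.1}; there is no substantive obstacle here. The only point requiring a little care is the bookkeeping in the application of Lemma \ref{ss3.1}(f) (keeping the inverse of $x$ straight), and the implicit use of the fact that $\Phi'_{w}$ is closed for every $w\in W$ to pass from the containment $\Phi_{x}\cup \Phi'_{y}\seq \Phi'_{w}$ to its closure. Both are essentially formalities.
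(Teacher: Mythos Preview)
Your proof is correct and follows exactly the approach indicated in the paper: set $\G:=\Phi_{x}\cup\Phi'_{y}$, verify the hypotheses of Theorem~\ref{ss6.1}, and then unwind the characterization of $z$. The paper omits all details, and you have supplied them accurately, including the correct use of Lemma~\ref{ss3.1}(f) with $(x^{-1},w)$ in place of $(x,y)$.
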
\begin{proof} This follows easily  from the theorem by  taking $\G:=\Phi_{x}\cup \Phi_{y}'$. The details are omitted.\end{proof}
  \section{Closure of the  union of a biclosed set and  a root} \lb{S7}
In general, if $\G$ is biclosed and $\a\in \Phi_{+}$, then $\ol{\G\cup\set{\a}}$ need not be  biclosed. There need not even be   a unique inclusion-minimal biclosed set containing $\ol{\G\cup\set{\a}}$; for example, consider $\G=\emptyset$, $(W,S)$ of type $A_{2}$  and $\a$ the highest root. Theorem \ref{ss1.8}, which is proved in this section,  gives sufficient (but far from necessary) conditions to ensure that such a closure is biclosed when either  $\G$ or $\Phi_{+}\sm \G$  finite.
\ssect{Closures in dihedral groups} \label{ss:trivdihed} The proof of Theorem \ref{ss1.8} is by reduction to the case of dihedral groups.
The following trivial  Lemma isolates some relevant  properties of dihedral groups for the proof of  \ref{ss1.8}(a).\begin{lem*} Assume that $(W,S)$ is dihedral. Let $\a\in \Phi_{+}$\begin{num} 
\item  For $w\in W$ with $l(s_{\a}w)=l(w)+1$, 
exactly one of the following  three possibilities occurs:\begin{subconds} 
\item $\a\in \Pi$, $w=1_{W}$ and $\ol{(\Phi_{w}\cup\Phi_{s_{a}w})\sm\set{\a}}=\eset$
\item $\a \in \Pi$, $w\neq 1_{W}$ and $\ol{(\Phi_{w}\cup\Phi_{s_{a}w})\sm\set{\a}}=\Phi_{+}\sm\set{\a}$
\item $\a\not \in \Pi$, $\Phi_{s_{a}w}=\Phi_{w}\dotcup\set{\a}$ and  $\ol{(\Phi_{w}\cup\Phi_{s_{a}w})\sm\set{\a}}=\Phi_{w}$.
\end{subconds} 
\item Assume that $\a\not\in \Pi$. If $W$ is infinite, there is a unique $w\in W$ satisfying $\text{\rm (a)(iii)}$. If $W$ is finite, there are exactly  two elements $w\in W$ satisfying $\text{\rm(a)(iii)}$; denoting them as $w'$ and $w''$, one has $\Phi_{+}=\Phi_{w'}\dotcup\set{\a}\dotcup \Phi_{w''}$. \end{num} 
\end{lem*}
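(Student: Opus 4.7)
Since $(W,S)$ is dihedral with $\Pi = \set{\s, \t}$, by \ref{ss2.1} and Lemma \ref{ss3.1}(d) the finite biclosed subsets of $\Phi_+$ are precisely the sets $\Phi_w$ for $w \in W$, and these are $\eset$ together with the initial segments of the two reflection orders of $\Phi_+$: write these as $I_k = \set{\r_1, \ldots, \r_k}$ (with $\r_1 = \s$) and $J_k = \set{\s_1, \ldots, \s_k}$ (with $\s_1 = \t$). In the finite case of order $2m$ the two sequences have length $m$ with $\s_j = \r_{m+1-j}$; in the infinite case they are both countably infinite with disjoint underlying sets whose union is $\Phi_+$. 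The plan for the whole lemma is to exploit this explicit parameterization through direct case analysis.

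For part (a), case (i) is an immediate computation. For case (ii) with (without loss of generality) $\a = \s$ and $w \neq 1_W$, the hypothesis forces $\s \notin \Phi_w$ hence $\Phi_w = J_\ell$ for some $\ell \geq 1$, so $\t \in \Phi_w$; similarly $\Phi_{s_\s w} = I_{\ell+1}$ contains $\r_2 = s_\s(\t)$, and therefore $(\Phi_w \cup \Phi_{s_\s w}) \sm \set{\s}$ contains both $\t$ and $\r_2$. By the dihedral description of $2$-closed sets in \ref{ss2.1}, $\Phi_+ \sm \set{\s}$ is the smallest biclosed set containing these two roots, and the asserted closure equality follows. For case (iii) with $\a \notin \Pi$, the claim $\Phi_{s_\a w} = \Phi_w \dotcup \set{\a}$ is equivalent to $s_\a w$ being an upper cover of $w$ in the \emph{right} weak order, i.e., to $s_\a w = w s_i$ for some simple $s_i$ with $w(\a_i) = \a$; granting this identity, Lemma \ref{ss3.1}(f) gives $\Phi_{s_\a w} = \Phi_{ws_i} = \Phi_w \dotcup \set{w(\a_i)} = \Phi_w \dotcup \set{\a}$, and the closure assertion is trivial since $\Phi_w$ is closed.

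The main obstacle is therefore to show in case (iii) that $l(s_\a w) = l(w) + 1$ with $\a$ non-simple forces $w^{-1}(\a) \in \Pi$. My plan is to argue this geometrically in the rank two reflection representation: the chambers $wC$ and $s_\a wC$ are adjacent (share a wall) if and only if $H_\a$ is a wall of $wC$, equivalently $s_\a = ws_iw^{-1}$ for some simple $s_i$, and otherwise any two non-adjacent chambers in a rank two arrangement differ in length by at least $3$; since $l(s_\a) \geq 3$ for non-simple $\a$ in dihedral, the length-one hypothesis forces adjacency, whence the claimed form of $s_\a w$. Granting case (iii), part (b) follows by counting the admissible $w$'s: in the finite case with $\a = \r_k = \s_{m+1-k}$ and $1 < k < m$ the two admissible elements are $w'$ with $\Phi_{w'} = I_{k-1}$ and $w''$ with $\Phi_{w''} = J_{m-k}$, and $\Phi_{w'} \dotcup \set{\a} \dotcup \Phi_{w''} = \set{\r_1, \ldots, \r_{k-1}} \dotcup \set{\r_k} \dotcup \set{\r_{k+1}, \ldots, \r_m} = \Phi_+$ by direct verification; in the infinite case $\a$ lies in exactly one of the two disjoint sequences $\set{\r_k}$ or $\set{\s_j}$, so only one admissible $w$ exists.
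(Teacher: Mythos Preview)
The paper omits the proof, treating it as a routine dihedral check. Your argument is mostly fine, but case~(iii) has a real gap. You assert that if $wC$ and $s_\alpha wC$ are not adjacent then they ``differ in length by at least $3$''. Read as $|l(s_\alpha w)-l(w)|\geq 3$, this is false: for $m\geq 3$, take $w=s_\sigma$ and $\alpha=\tau$; then $l(s_\tau s_\sigma)-l(s_\sigma)=1$ while $s_\sigma C$ and $s_\tau s_\sigma C$ are not adjacent (their gallery distance is $l(s_{s_\sigma(\tau)})=3$). What \emph{is} true is that the gallery distance $l(w^{-1}s_\alpha w)=l(s_{w^{-1}(\alpha)})$ between $wC$ and $s_\alpha wC$ is at least $3$ when they are non-adjacent; but that is not the quantity $l(s_\alpha w)-l(w)$ in your hypothesis, and the observation $l(s_\alpha)\geq 3$ does not bridge the two.

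The desired conclusion is nonetheless correct and follows directly from your own parameterization. Suppose $\Phi_w=I_k$. Then $\Phi_{s_\alpha w}$ is a biclosed set of size $k+1$ containing $\alpha$, hence equals $I_{k+1}$ or $J_{k+1}$. In the first case $\Phi_{s_\alpha w}=\Phi_w\dotcup\{\rho_{k+1}\}$, forcing $\alpha=\rho_{k+1}$ and $s_\alpha w=ws_i$ as required. In the second case one computes directly that $s_\alpha=(s_\tau s_\sigma\cdots)_{k+1}\cdot\bigl((s_\sigma s_\tau\cdots)_k\bigr)^{-1}=s_\tau$, so $\alpha=\tau\in\Pi$, contradicting the hypothesis of~(iii). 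The case $\Phi_w=J_k$ is symmetric. (A minor separate point: your appeal to \S\ref{ss2.1} in case~(ii) only treats finite dihedral groups; in the infinite case one still needs the easy direct check that $\overline{\{\tau,s_\sigma(\tau)\}}=\Phi_+\setminus\{\sigma\}$.)
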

 
  \ssect{Closure after adjoining a root (proof)}
  \begin{proof}[of Theorem \ref{ss1.8}]\lb{ss7.2}
We prove (a).  As in Lemma \ref{ss5.4},  for any any $W'\in \CM_{\a}$ and $p\in W$, write
$p=p_{W'}p'_{W'}$ where $p_{W'}\in W'$ and  $p'_{W'}$ is the element of minimal length in $W'p$.  Also write
\bee \Phi_{p,W'}:=\Phi_{p}\cap \Phi_{W'}=\Phi_{W',+}\cap p'_{W'}(\Phi_{W',+})\eee  where the right hand  equality is by  Lemma \ref{ss5.4}(a).
Note $\a\not\in \Phi_{x}$. 
Set $z=s_{\a}x$. Then  $z'_{W'}=x'_{W'}$ and  $z_{W'}=s_{\a}x_{W'}$.
From \ref{ss5.4}(b), it follows  that $l_{W'}(s_{\a}x_{W'})=l_{W'}(z_{W'})=l_{W'}(x_{W'})+1$.
 Lemma \ref{ss:trivdihed} implies that $\ol{\Phi_{x_{W'},W'}\cup\set{\a}}=\ol{\Phi_{x_{W'},W'}\cup\Phi_{s_{\a}x_{W'},W'}}$.  So
\bee \ol{\Phi_{x}\cup\set{\a}}=\ol{\cup_{W'\in \CM_{\a }}\Phi_{x_{W'},W'}\cup
\set{\a}}= \ol{\cup_{W'\in \CM_{\a}}(\Phi_{x_{W'},W'}\cup \Phi_{s_{a}x_{W'},W'})}=\ol{\Phi_{x}\cup\Phi_{s_{\a}x}}.\eee Since $\Phi_{x}\cup\set{\a}\seq \Phi_{v}$, it follows that  
$\ol{\Phi_{x}\cup\Phi_{s_{\a}x}}=\ol{\Phi_{x}\cup\set{\a}}\seq \Phi_{v}$ i.e. $v$ is an upper bound for $x$ and $s_{\a}x$ in weak order. Let $y:=x\vee s_{\a}x$, so  by Theorem \ref{ss1.5}(a),
$\ol{\Phi_{x}\cup\Phi_{s_{\a}x}}=\ol{\Phi_{x}\cup\set{\a}}=\Phi_{y}\seq \Phi_{v}$.
If $z\in W$ with $x\leq z$ and ${\a}\in \Phi_{z}$, then
$\Phi_{y}=\ol{\Phi_{x}\cup\set{\a}}\seq \Phi_{z}$. Hence
$y$ is the minimal element of $\mset{z\in W\mid x\leq z, {\a}\in \Phi_{z}}$.

Clearly, $\a\in \Phi_{y}$ so $l(s_{\a}y)<l(y)$. Since $x\leq y$, it is possible to write 
$y=xu$ where $l(y)=l(x)+l(u)$. Note that  $u\neq 1_{W}$ since $\a\in \Phi_{y}\sm \Phi_{x}$.
Choose $\tau\in \Pi$ such that $l(us_{\tau})<l(u)$. Then $x\leq ys_{\tau}<y$.
Since $\Phi_{y}=\ol{\Phi_{x}\cup\set{\a}}$, it follows that $\a\not \in \Phi_{ys_{\tau}}$ and therefore
$\a\in \Phi_{y}\sm \Phi_{ys_{\tau}}=\set{-y(\tau)}$. In particular,
$s_{\a}y=ys_{\tau}<y$ and  $\Phi_{s_{\a}y}=\Phi_{y}\sm\set{\a}$.
Obviously, $\ol{(\Phi_{x}\cup\Phi_{s_{\a}x})\sm\set{\a}}\seq  \Phi_{y}\sm\set{\a}=\Phi_{s_{\a}y}$ since  $\Phi_{y}\sm\set{\a}$ is closed.  To complete the proof of (a), it will suffice to verify  the following claim: 
\be \lb{eq7.2.1}\Phi_{s_{\a}y}\seq\ol{(\Phi_{x}\cup\Phi_{s_{\a}x})\sm\set{\a}}.\ee
From Lemma \ref{ss1.7}, it follows  both  that $\Phi_{x}\cup \set{\a}\sreq \Phi_{y,-1}$ and that it will suffice to show that $(\Phi_{x}\cup\Phi_{s_{\a}x})\sm\set{\a}\sreq \Phi_{s_{\a}y,-1}$.

Abbreviate $s_{\a}=s$, $s_{\tau}=r$ so $sy=yr$ and $r\in S$. Let $\b\in \Phi_{sy,-1}$ i.e. $\b\in \Phi_{+}$ with $l(s_{\b}sy)=l(sy)-1$.
Set $z:=s_{\b}sy$. Similarly as in \ref{ss6.1}, we have one of the possibilities indicated by the following diagrams of  vertices,  edges and  edge labels appearing in paths (all paths  from $z$ to $y$ if $l(zr)>l(z)$, or from $zr$ to $y$ if $l(zr)<l(z)$) in   the Bruhat graph $\Omega$.

 \bee \xymatrix{ &{y}&&&& &{y}&&\\
{zr}\ar@{->}[ur]^{\b}&&
{yr}\ar@{->}[ul]_{\a}&&&
{w}\ar@{->}[ur]^{\d}&&
{yr}\ar@{->}[ul]_{\a}\\
&{z}\ar@{->}[ul]^{\g}\ar@{->}[ur]_{\b}&&&&
{wr}\ar@{->}[u]^{\e}\ar@{->}[urr]^(.75){\d}&&
{z}\ar@{->}[u]_{\b}\ar@{->}[ull]_(.75){\g}\\
&&&&&&
{zr}\ar@{->}[uuu]^(.3){\b}\ar@{->}[ul]^{\g}\ar@{->}[ur]_{\rho}&} \eee

Consider first the case that $l(zr)>l(z)$. Then $\beta\in \Phi_{y,-1}\sm\set{\a}\seq\Phi_{x}$, so $\b\in (\Phi_{x}\cup \Phi_{sx})\sm\set{\a}$ as desired in this case.
Consider now the contrary case that $l(zr)<l(z)$. Let $W'\in \CM$ with $\Phi_{W'}=\Phi\cap(\bbR  \a+\bbR \b)$. Multiplying the vertex labels of the second diagram by $u:=y_{W'}^{\prime-1}$ on the right gives a corresponding
diagram in $\Omega_{W'}$. One necessarily has $l_{W'}(yu)=l_{W'}(zru)+3$.
Note that $s_{\a}(x_{W'})=(s_{a}x)_{W'}$ and $s_{\a}(y_{W'})=(s_{\a}y)_{W'}$. 
By the argument  in the first case, $\d\in \Phi_{x}$.  
Hence $\d\in \Phi_{x,W'}$ and $x_{W'}\neq 1_{W'}$. If $\a\in \Pi_{W'}$, then Lemma \ref{ss:trivdihed}(a) gives
we have \bee \b\in \Phi_{W',+}\sm\set{\a}=\ol{(\Phi_{x,{W'}}\cup\Phi_{s_{a}x,{W'}})\sm\set{\a}}\seq \ol{(\Phi_{x}\cup\Phi_{s_{\a}x})\sm\set{\a}}\eee as required. So we may, and do,  assume that $\a\not \in \Pi_{W'}$. 
 Lemma 
\ref{ss:trivdihed}(a) implies that $\d\in \Phi_{x,W'}=\Phi_{s_{\a}x,W'}\sm\set{\a}$. Also,  since $\Phi_{s_{\a}y}=\Phi_{y}\sm\set{\a}$, it follows that  $\d\in \Phi_{s_{\a}y,W'}=\Phi_{y,W'}\sm\set{\a}$. Thus,  Lemma 
\ref{ss:trivdihed}(b) implies that $ x_{W'}=s_{\a}y_{W'}$. From the above diagram,
$\b\in \Phi_{y}$ so \bee\b\in \Phi_{y,W'}\sm\set{\a}=\Phi_{s_{\a}y,{W'}}=\Phi_{x,{W'}}\seq\ol{(\Phi_{x,{W'}}\cup\Phi_{s_{\a}x,{W'}})\sm\set{\a}}\seq \ol{(\Phi_{x}\cup\Phi_{s_{\a}x})\sm\set{\a}}\eee  as required.

The proof of Theorem \ref{ss1.8}(b) is very similar to that of \ref{ss1.8}(a)   and is omitted.
\end{proof}
\ssect{Fibering the closure  over a newly adjoined root} \label{ss7.3}
Theorem \ref{ss1.8} implies that $\Phi_{y}$ in \ref{ss1.8}(a) (resp., $\Phi'_{y}$ in \ref{ss1.8}(b)) is well fibered over $\a$ in the following sense: for any  $W'\in \CM_{\a}$, the intersection  of  $\Phi_{y}$ (resp., $\Phi'_{y}$) with  the plane spanned by the roots of $W'$ consists of all positive roots lying in a fixed one  of the two  closed half-planes in that plane bounded by the line spanned by $\a$; for all but  the  finitely many $W'\in \CM_{\a}$
 for which $s_{\a}\not\in \Pi_{W'}$, this intersection is either $\Phi_{W',+}$  or $\set{\a}$.
 
  Higher dimensional analogues of this phenomenon, involving closures of 
  sets obtained by adjoining all roots of a suitable reflection subgroup,   may 
  be expected but remain conjectural in general. The Corollary below is the 
  simplest  result  of this type.
  
  \begin{cor*} Let $W_{J}$ be a finite parabolic subgroup of $W$ with longest element $w_{J}$. Let $x\in W$ such that $l(w_{J}x)=l(w_{J})+l(x)$ and 
  $y:= w_{J}\vee x$ exists in  weak order on $W$. Write $y=w_{J}z$. Then
 \begin{num}\item $u\vee z=uz$  and $\Phi_{u\vee z}=\Phi_{u}\dotcup \Phi_{z}$ for all $u\in W_{J}$.
 \item For any $\a\in \Phi_{W_{J},+}$ and any $W'\in \CM_{\a}$, either
 $\Phi_{W',+}\seq \Phi_{y}$ or $\Phi_{W',+}\cap \Phi_{y}=\set{\a}$.  
  \end{num} 
   \end{cor*}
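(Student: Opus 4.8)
The plan is to prove (b) first and then deduce (a) from it. Throughout I write $\Pi_J:=\Pi\cap\Phi_{W_J}$ for the simple roots of $W_J$, so that $\Phi_{W_J,+}=\Phi_{w_J}=\ol{\Pi_J}$ and $\Phi_{W_J,-}=-\Phi_{W_J,+}$. I begin by recording the structure of $z$. Since $y=w_J\vee x\geq w_J$, Corollary \ref{ss1.6}(a) applied with its $x$ equal to $w_J$ (note $w_J^{-1}=w_J$ and $\Phi_{w_J}=\Phi_{W_J,+}$) shows that $z:=w_Jy$ is the unique element with $y=w_Jz$, $l(y)=l(w_J)+l(z)$, lying in $\mset{v\mid \Phi_v\cap\Phi_{W_J,+}=\eset}$. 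Hence $z$ has minimal length in $W_Jz$, so $l(uz)=l(u)+l(z)$ for every $u\in W_J$, and $\Phi_z\seq\Phi_+\sm\Phi_{W_J}$. By Lemma \ref{ss3.1}(f) this gives $\Phi_{uz}=\Phi_u\dotcup u(\Phi_z)$, and since $\Phi_z\cap\Phi_u\seq(\Phi_+\sm\Phi_{W_J})\cap\Phi_{W_J,+}=\eset$, Theorem \ref{ss1.9}(a) shows that statement (a) is equivalent to the assertion that $u(\Phi_z)=\Phi_z$ for all $u\in W_J$. As $\mset{u\in W_J\mid u(\Phi_z)=\Phi_z}$ is a subgroup, it suffices to prove $s_\beta(\Phi_z)=\Phi_z$ for $\beta\in\Pi_J$; writing $\Psi:=\Phi_y\sm\Phi_{W_J,+}=w_J(\Phi_z)$, this reduces to showing $s_\beta(\delta)\in\Phi_y$ for all $\beta\in\Pi_J$ and $\delta\in\Psi$.

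Granting (b), this last point is immediate: given $\beta\in\Pi_J$ and $\delta\in\Psi$ with $s_\beta(\delta)\neq\delta$, take $W'\in\CM_\beta$ with $\Phi_{W'}=\Phi\cap(\bbR\beta+\bbR\delta)$. Then $\delta\in\Phi_y\cap\Phi_{W',+}$ and $\delta\neq\beta$ (as $\delta\notin\Phi_{W_J}$), so the second alternative $\Phi_{W',+}\cap\Phi_y=\set{\beta}$ of (b) cannot hold; hence $\Phi_{W',+}\seq\Phi_y$, and in particular $s_\beta(\delta)\in\Phi_{W',+}\seq\Phi_y$. This yields (a), whence $u\vee z=uz$ and $\Phi_{u\vee z}=\Phi_u\dotcup\Phi_z$ follow from the equivalences in Theorem \ref{ss1.9}(a).

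It therefore remains to prove (b). Fix $\alpha\in\Phi_{W_J,+}$ and $W'\in\CM_\alpha$, set $P:=\bbR\Pi_{W'}$, and reduce to the dihedral group $W'$ via Lemma \ref{ss5.4}: one has $\Phi_y\cap\Phi_{W',+}=\Phi_{y_{W'},W'}$, the inversion set in $W'$ of the component $y_{W'}$, which is an initial section of a reflection order of $\Phi_{W',+}$ and contains $\alpha$. If $\Phi_{W_J}\cap P$ has rank $2$ then $P\seq\bbR\Pi_J$, so $\Phi_{W',+}=\Phi\cap P\cap\Phi_+\seq\Phi_{W_J,+}\seq\Phi_y$ and the first alternative of (b) holds. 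Thus I may assume $\Phi_{W_J}\cap\Phi_{W',+}=\set{\alpha}$, and the task is to show that $\Phi_{y_{W'}}$ equals $\set{\alpha}$ or all of $\Phi_{W',+}$, i.e. $y_{W'}\in\set{s_\alpha,w_{W'}}$ with $w_{W'}$ the longest element of $W'$.

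This rank-one case is the main obstacle, and it is genuinely non-planar. When $s_\alpha\in\Pi_{W'}$ the claim follows from the well-fibering of Theorem \ref{ss1.8} recorded in \ref{ss7.3}: the fiber of $\Phi_y$ over $\alpha$ is bounded by $\bbR\alpha$, and $\alpha$ simple in $W'$ leaves only the two extreme initial sections. But for the finitely many $W'\in\CM_\alpha$ with $s_\alpha\notin\Pi_{W'}$ — where $\alpha$ is interior to the positive arc of $W'$ — an intermediate initial section is a priori compatible with every rank-two constraint, so here I would exploit the full hypothesis $\Phi_{W_J,+}\seq\Phi_y$ rather than merely $\alpha\in\Phi_y$. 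Since $w_J$ is the maximum of $(W_J,\leq)$ one has $w_J=\bigvee_{s\in J}s$, hence $y=\bigvee\bigl(\set{x}\cup\mset{s_\beta\mid\beta\in\Pi_J}\bigr)$, and I propose an induction on $\vert J\vert$, the case $\vert J\vert=1$ being exactly Theorem \ref{ss1.8}(a) (which gives $y_{W'}\in\set{s_\alpha,w_{W'}}$ for a single simple generator). In the inductive step one peels off a generator $s$ via $y=\bigl(w_{J\sm\set{s}}\vee x\bigr)\vee s$, and combines the inductive description of $w_{J\sm\set{s}}\vee x$ with Theorem \ref{ss1.8}(a) and the biclosedness of both $\Phi_y$ and $\Phi'_y$. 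Making this combination precise — in effect showing that adjoining the remaining roots of $\Phi_{W_J,+}$ collapses any intermediate half-plane fiber over an interior $\alpha$ to one of the two extremes — is the crux, and I expect it to be the hardest step of the proof.
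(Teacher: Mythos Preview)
Your reduction of (a) to (b) is correct, and you rightly note that it only uses (b) for $\alpha=\beta\in\Pi_J$. But your direct attack on (b) has two problems that leave a genuine gap. First, the ``hard case'' $\alpha\notin\Pi_{W'}$ (with $\Phi_{W_J}\cap\Phi_{W',+}=\{\alpha\}$) never occurs: since $s_\alpha\in W_J$ one has $\Phi_{s_\alpha}\seq\Phi_{W_J,+}$, so $N(s_\alpha)\cap W'\seq\{s_\delta:\delta\in\Phi_{W_J,+}\cap\Phi_{W'}\}=\{s_\alpha\}$, whence $\alpha\in\Pi_{W'}$ by the definition of $\chi(W')$. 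Second, and more seriously, even for $\alpha\in\Pi_{W'}$ your appeal to the well-fibering of \ref{ss7.3} is unjustified in general: that fibering is stated for $y$ of the specific form $y=x'\vee s_\alpha x'$ with $l(s_\alpha x')=l(x')+1$ from Theorem \ref{ss1.8}(a). You can put $y=w_J\vee x$ in that form when $\alpha\in\Pi_J$ (take $x'=w_{J\sm\{s_\alpha\}}\vee x$, using Corollary \ref{ss1.6}(b) to see $\alpha\notin\Phi_{x'}$), but there is no evident way to do so for $\alpha\in\Phi_{W_J,+}\sm\Pi_J$. Your proposed induction on $\vert J\vert$ does not address this: peeling off one simple generator $s$ gives the fibering of $\Phi_y$ over $\bbR\alpha_s$, not over $\bbR\alpha$ for the other $\alpha$.

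The paper inverts your order and proves (a) first. For each $s\in J$ it writes $w_Jz=s\vee(w_{J\sm\{s\}}\vee x)$ and applies Theorem \ref{ss1.8}(a) together with the involution $s\mapsto w_Jsw_J$ of $J$ to conclude $z<sz$; hence $sz=zs'$ with $s'\in S$, yielding an isomorphism $u\mapsto u'$ of $W_J$ onto some $W_K$ with $uz=zu'$, and then a length count gives $u\vee z=uz$ and $\Phi_{uz}=\Phi_u\dotcup\Phi_z$. For (b) with $W'\not\seq W_J$ (so $\alpha\in\Pi_{W'}$ as above), one writes $s_\alpha=uru^{-1}$ with $u\in W_J$, $r\in J$; then $s_\alpha uz=uzr'$ covers $uz$, so Theorem \ref{ss1.8}(a) applied to $uz$ gives the dichotomy for $\Phi_{s_\alpha uz}\cap\Phi_{W',+}$, and (a) gives $\Phi_{s_\alpha uz}\cap\Phi_{W',+}=\{\alpha\}\cup(\Phi_z\cap\Phi_{W',+})=\Phi_y\cap\Phi_{W',+}$. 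Your own observation actually suggests a clean alternative: prove (b) only for $\alpha\in\Pi_J$ (which you \emph{can}, as noted above), deduce (a) via your reduction, and then obtain (b) for arbitrary $\alpha$ by the paper's argument using (a).
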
 
 \begin{proof} Note that $x$ and $z$ are the minimal length elements of their cosets $W_{J}x$ and $W_{J}z$ respectively, so for any $u\in W_{J}$,
 $l(ux)=l(u)+l(x)$ and $l(uz)=l(u)+l(z)$.
 
 It is well-known that the  map $s\mapsto w_{J}sw_{J}$ defines a bijection $\theta \colon J\to J$.  Now for any $s\in J$,
 \begin{equation*} w_{J}z= w_{J}\vee x=(s\vee w_{J\setminus{\set{s}}})\vee x=
 s\vee (w_{J\setminus{\set{s}}}\vee x).
 \end{equation*} Since $\Phi_{w_{J\setminus{\set{s}}}}\cap \Phi_{s}=\eset=\Phi_{x}\cap \Phi_{s}$, Corollary \ref{ss1.6} implies that 
$\Phi_{ w_{J\setminus{\set{s}}}\vee x}\cap \Phi_{s}=\eset$. From Theorem \ref{ss1.8}, it follows that $w_{J}z=s\vee sw_{J}z$. Using that $\theta$ is a bijection, 
\begin{equation*}
(w_{J}s)z=\theta(s)w_{J}z\leq \theta(s)\vee \theta(s)w_{J}z=  w_{J}z=(w_{J}s)(sz).
\end{equation*} Since $l((w_{J}s)z)=l(w_{J}s)+l(z)$ and 
$l\bigl((w_{J} s))(sz)\bigr)=l(w_{J}s)+l(sz)$, this implies that
$z\leq s z$. 
Write $sz=zs'$ where $s'\in S$.   Varying $s$ gives a bijection
$s\mapsto s'\colon J\to K$ for some subset $K$ of $S$.  This bijection extends to a 
group isomorphism $W_{J}\to W_{K}$ which will  be denoted as $u\mapsto u'$. Note that $uz=zu'$ for all $u\in W_{J}$. Now $z$ is the minimal length element in $W_{J}z$ and in $zW_{K}$. Let $u\in W_{J}$. Then  $l(uz)=l(zu')=l(z)+l(u)=l(z)+l(u')$.  This implies that $z\leq uz$, $u\leq uz$ and  therefore $ z\vee u\leq uz$. Since \begin{equation*}
l(z\vee u)=\vert \Phi_{z\vee u}\vert \geq\vert \Phi_{z}\cup \Phi_{u}\vert  =\vert \Phi_{z}\dotcup \Phi_{u}\vert =\vert \Phi_{z}\vert+\vert \Phi_{u}\vert  =l(u)+l(z)=l(uz),
\end{equation*} it follows that $z\vee u=uz$ and $\Phi_{u\vee z}=\Phi_{u}\dotcup \Phi_{z}$, proving (a).

Now let $\a,W'$ be as in (b). If $W'\seq W_{J}$, then $\Phi_{W',+}\seq \Phi_{W_{J},+}=\Phi_{w_{J},+}\seq \Phi_{y}$. Otherwise, $\Phi_{W_{J}}\cap \Phi_{W',+}=\set{\a}$ (since $W_{J}$ finite parabolic implies $\Phi_{W_{J}}=\Phi\cap \mathbb{R} \Phi_{W_{J}}$). This implies that $\a\in \Pi_{W'}$.  Choose $u\in W_{J}$, $r\in J$ so
that $s_{\a}=uru^{-1}$ and $l(s_{\a})=2l(u)+1$. Then  $s_{\a}u=ur>u$, so $\overline{\set{\a}\cup\Phi_{u}}=\Phi_{s_{\a}u}$. Also, $s_{\a}uz=urz=uzr'>uz$
so $\overline{\set{\a}\cup \Phi_{uz}}=\Phi_{s_{a}uz}$.  From the remarks at the start of this subsection, either $\Phi_{W'}\cap \Phi_{s_{\a}uz}=\Phi_{W',+}$ or $\Phi_{W'}\cap \Phi_{s_{\a}uz}=\set{\a}$. Since $ \Phi_{s_{a}uz}=\Phi_{s_{\a u}}\dotcup \Phi_{z}$ and  $ \Phi_{y}=\Phi_{w_{J}z}=\Phi_{w_{J}}\dotcup \Phi_{z}$ with $\Phi_{s_{\a}u}\cap \Phi_{W',+}=\set{\a}=\Phi_{w_{J}}\cap \Phi_{W',+}$, (b) follows.

 \end{proof}

 \section{Galois connections} \lb{S8}
In this section, Theorem \ref{ss1.9} and its  dual are proved.
 \ssect{First  Galois connection: proof}
 \begin{proof}[of Theorem \ref{ss1.9}]
 Assume that $x,z\in W$ with   $xRz$ i.e. $z(\Phi_{x})=\Phi_{x}$.
 Then $\Phi_{x}\cap \Phi_{z^{-1}}=\eset$ and $l(zx)=l(z)+l(x)$. Also,  $xRz^{-1}$ holds since $z^{-1}(\Phi_{x})=\Phi_{x}$. This proves (b). 
 Note that 
 $\Phi_{x}\cup \Phi_{z}=\Phi_{z}\dotcup z(\Phi_{x})=\Phi_{zx}$
 so clearly $z\vee x=zx$. This proves the left implication  ``$\Longrightarrow$'' in (a).
 For the reverse  implication, suppose that $x\vee z=zx$ and  $\Phi_{x}\cap \Phi_{z}=\eset$. From the first equation, $z\leq zx$ so $l(zx)=l(z)+l(x)$.
Also, the two equations imply that  $\Phi_{x}\dotcup\Phi_{z}\seq \Phi_{zx}$ where the left hand side has cardinality
$l(x)+l(z)$  and the right hand side has cardinality $l(zx)=l(z)+l(x)$. This implies that 
$\Phi_{x}\dotcup\Phi_{z}= \Phi_{zx}=\Phi_{z}\dotcup z(\Phi_{x})$  and so $z(\Phi_{x})=\Phi_{x}$. It remains to prove the equivalence on the right  of (a). 
The right implication   ``$\Longleftarrow$'' is trivial and  the converse follows 
since if $\Phi_{x}\cap \Phi_{z}=\eset$, then \bee\vert \Phi_{x\vee z}\vert \geq 
\vert \ol{\Phi_{x}\dotcup \Phi_{z}}\vert \geq \vert \Phi_{x}\vert +\vert \Phi_{z}\vert=l(z)+l(x)\geq l(zx)=\vert \Phi_{zx}\vert.\eee   This  completes the proof of (a).

 For $X\seq W$, $X^{\dag}=\mset{z\in W\mid z(\Phi_{x})=\Phi_{x}\text{ for all $x\in X$}}$
  is clearly a subgroup of $W$, proving (c). 
 To prove (d), consider a subset $Z$ of $W$ and let  \bee \CL:=Z^{*}=\mset{x\in W\mid x\vee z=zx,
 \Phi_{x}\cap \Phi_{z}=\eset\text{ for all $z\in Z$}}.\eee
  Obviously $1_{W}\in \CL$. Suppose given a non-empty subset $A$ of $\CL$, say with $a'\in A$, and let $a'':=\meet A$ in $(W,\leq)$. Let $z\in Z$. Note that $a''\leq a'$ implies that   $\Phi_{z}\cap \Phi_{a''}\seq \Phi_{z}\cap \Phi_{a'}= \eset$ for all $z\in Z$. Further,   $l(za)=l(z)+l(a)$  for all $a\in A$; in particular, since $a''\leq a'\in A$, it follows that $l(za'')=l(z)+l(a'')$
  and   \be \lb{eq8.1.1} z\vee  a''=z\vee \Bigl(\,\meet  A\,\Bigr)\leq \meet_{a\in A}(z\vee a)= \meet_{a\in A}za=z\Bigl(\,\meet_{a\in A}a\,\Bigl)=za''\ee  
  using Corollary  \ref{ss1.6}(a). Since $\Phi_{z}\cap \Phi_{a''}=\eset$, we have \bee l(z\vee a'')=\vert \Phi_{z\join a''}\vert\geq\vert \Phi_{z}\dotcup \Phi_{a''}\vert =\vert \Phi_{z}\vert +\vert\Phi_{a''}\vert=l(z)+l(a'')=l(za'')\eee and  equality holds   throughout \eqref{eq8.1.1}. Hence $a''\in \CL$ and so   $a''$ is obviously the greatest lower bound of $A$ in $\CL$. This proves (d). It remains to prove (e). Maintain the notation in the proof of (d), but suppose now that $A$ has  join $b$ in $(W,\leq)$. From $\Phi_{z}\cap \Phi_{a}=\eset$ and $l(za)=l(z)+l(a)$ for all $a\in A$, it follows that $\Phi_{b}\cap \Phi_{z}=\eset$
  and $l(zb)=l(z)+l(b)$ by  Corollary \ref{ss1.6}(b).
Hence   
 \bee z\vee  b=z\vee \Bigl(\,\join  A\,\Bigr)=\join_{a\in A}(z\vee a)= \join_{a\in A}za=z\Bigl(\,\join_{a\in A}a\,\Bigl)=zb\eee  by Corollary \ref{ss1.6}(a). This completes the proof of (e) and of the Theorem.
 \end{proof}
 \subsection{Second Galois connection} The dual result to Theorem  \ref{ss1.9} will now be formulated and its proof sketched.
 Define a relation $R'$ on $W$ by $xR'z$ if and only if  $z(\Phi'_{x})=\Phi'_{x}$  for $x,z\in W$.  
 Define the  two maps $X\mapsto X^{\dag'}$ and
$Z\mapsto Z^{*'}$ from $\CP(W)\rightarrow\CP(W)$ by replacing $R$ by $R'$ in the analogous definition in \ref{ss1.9}. Also define the corresponding families of stable subsets
\[\CW'_{*}:=\mset{\CL\in \CP(W)\mid \CL^{\dag' *'}=\CL}=\mset{Z^{*'}\mid Z\in \CP(W)}\]
and \[\CW'_{\dag}:=\mset{\CG\in \CP(W)\mid \CG^{*'\dag' }=\CG}=\mset{X^{\dag'}\mid X\in \CP(W)}.\] 
 \begin{thm*} \begin{num}\item One has \bee  xR'z\iff (\,\ol {\Phi'_{x}\cup\Phi_{z}}=\Phi'_{zx} \text{ and $\Phi'_{x}\cap \Phi_{z}=\eset)$}\iff  \Phi_{z}\dotcup \Phi'_{x}=\Phi'_{zx}.\eee
\item If $xR'z$ then $z\leq x$, $l(x)-l(z)=l(z^{-1}x)$ and $xR'z^{-1}$.  
\item The elements of $\CW'_{\dag}$ (other than perhaps $W$) are finite  subgroups of $W$.
\item The elements of $\CW'_{*}$ are (possibly empty) complete meet subsemilattices of $(W,\leq)$. One has $W^{*'}=\eset$ if $W$ is infinite, and otherwise $W^{*'}=\set{w_{S}}$ where $w_{S}$ is the longest element of $W$.
\item If $L\in \CW'_{*}$, then  for any  subset $X$ of $L$
which has an upper bound in $W$, its join $x=\join X$ in $W$ is an element of $L$ (and so $x$ is the least upper bound of $X$ in $L$).
\item Let $w\in\CI=\CI(W):=\set{w\in W\mid w^{2}=1_{W}}$. Then $wR'w$.  The stable subgroup
$\set{w}^{*'\dag'}$ is a  subgroup of $W$ contained in  $\mset{x\in W\mid x\leq w}$ and  containing $w$. The corresponding stable subsemilattice 
$\set{w}^{*'}$ is  contained in  $\mset{x\in W\mid x\geq w}$ and  has $w$ as minimum element. The map $w\rightarrow \set{w}^{*'}$ gives an injection $i\colon \CI(W)\rightarrow \CW'_{*}\setminus \set{\eset}$.   \end{num}\end{thm*}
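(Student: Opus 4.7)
The plan is to mirror the proof of Theorem \ref{ss1.9} in Section \ref{S8}, treating part (a) as the central calculation and deducing the other parts from it together with the formal Galois-connection machinery and the closure-theoretic description of meets and joins in weak order.

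For part (a), I would start from the identity $\Phi'_{x}=\Phi_{+}\cap x(\Phi_{+})$, which follows from $\Phi_{x}=\Phi_{+}\cap x(\Phi_{-})$ and the partition $\Phi=x(\Phi_{+})\dotcup x(\Phi_{-})$. Combined with the standard decomposition $z(\Phi_{+})=\Phi'_{z}\dotcup(-\Phi_{z})$, the condition $z(\Phi'_{x})=\Phi'_{x}$ unpacks setwise into the pair $\Phi'_{x}\cap\Phi_{z}=\eset$ (equivalently $\Phi_{z}\seq\Phi_{x}$) together with $\Phi_{zx}=\Phi_{x}\sm\Phi_{z}$, which rearranges to $\Phi'_{zx}=\Phi_{z}\dotcup\Phi'_{x}$; the closure formulation $\ol{\Phi'_{x}\cup\Phi_{z}}=\Phi'_{zx}$ then follows because $\Phi'_{zx}$ is $2$-closed by Lemma \ref{ss3.1}. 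Part (b) is immediate from (a): $\Phi_{z}\seq\Phi_{x}$ yields $z\leq x$, counting inversions gives $l(zx)=l(x)-l(z)$, and applying $z^{-1}$ to both sides of $z(\Phi'_{x})=\Phi'_{x}$ produces $xR'z^{-1}$, whence $l(z^{-1}x)=l(x)-l(z)$ by the same count.

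Part (c) is formal: $X^{\dag'}$ is a subgroup because the preservation condition composes and is inversion-stable; when $X$ is non-empty, (b) places $X^{\dag'}\seq \mset{w\in W\mid w\leq x}$ for any fixed $x\in X$, and this last set is finite by \ref{ss1.3}. For (d), given $L=Z^{*'}$ and non-empty $A\seq L$ with meet $a''=\meet A$ in $W$, Theorem \ref{ss1.5}(b) gives $\Phi'_{a''}=\ol{\cup_{a\in A}\Phi'_{a}}$; each $z\in Z$ permutes every $\Phi'_{a}$, hence their union, hence its closure by Lemma \ref{ss3.2}(b), so $a''\in L$. The identification $W^{*'}=\set{w_{S}}$ in the finite case and $W^{*'}=\eset$ otherwise reduces to the observation that a subset of $\Phi_{+}$ preserved by every element of $W$ must be empty, forcing $\Phi'_{x}=\eset$ and hence $x=w_{S}$.

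For (e), set $x=\join X$ with $X\seq L$ bounded above, and fix $z\in Z$; the task is to verify the conditions of (a) for the pair $(x,z)$. From (b) applied to each $y\in X$ we have $yR'z$ and $yR'z^{-1}$, so $z,z^{-1}\leq y$, and therefore $z,z^{-1}\leq x$. Applying Corollary \ref{ss1.6}(a)--(b) with parameter $z^{-1}$, the order isomorphism $u\mapsto z^{-1}u$ identifies $\join_{y\in X}zy$ with $zx$ in $W$; Theorem \ref{ss1.5}(a) then gives $\Phi_{zx}=\ol{\cup_{y}\Phi_{zy}}=\ol{\cup_{y}(\Phi_{y}\sm\Phi_{z})}$. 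This last set lies in the closed set $\Phi_{x}\sm\Phi_{z}$ and has the same finite cardinality $l(x)-l(z)=l(zx)$, so the two coincide and (a) yields $xR'z$. Part (f) is a short assembly: for $w\in \CI(W)$, the identity $wR'w$ follows from $w(\Phi'_{w})=w(\Phi_{+}\cap w(\Phi_{+}))=w(\Phi_{+})\cap\Phi_{+}=\Phi'_{w}$ using $w^{2}=1_{W}$; the stated containments for $\set{w}^{*'}$ and $\set{w}^{*'\dag'}$ follow from (b)--(d), and injectivity of $i$ is immediate because $w$ is recoverable from $\set{w}^{*'}$ as its minimum element. The main obstacle I anticipate is part (a): carrying out the setwise translation requires careful bookkeeping of how $z$ exchanges positive and negative roots, distinguishing the contributions to $\Phi_{zx}$ coming from $\Phi_{z}$ from those coming from $z(\Phi_{x})$; once (a) is secured the remaining parts follow the Galois-connection template already used for Theorem \ref{ss1.9}.
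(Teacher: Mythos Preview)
Your proposal is correct and follows the same template as the paper, which simply states that parts (a)--(e) are ``similar to those of the corresponding parts of Theorem~\ref{ss1.9}'' and then gives only the short argument for (f). Your treatment fills in the details the paper omits.

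One point worth noting: your argument for (d) is actually cleaner than a direct dualization of the paper's proof of Theorem~\ref{ss1.9}(d). Rather than mimicking the hands-on inequality argument around \eqref{eq8.1.1}, you invoke Theorem~\ref{ss1.5}(b) to write $\Phi'_{a''}=\ol{\cup_{a\in A}\Phi'_{a}}$ and then use $W$-equivariance of closure (Lemma~\ref{ss3.2}(b)) to conclude that $z$ permutes $\Phi'_{a''}$. This works precisely because for the relation $R'$ the meet formula of \ref{ss1.5}(b) expresses exactly the set $\Phi'_{a''}$ that $z$ is required to fix; the analogous shortcut is not available for $R$ in \ref{ss1.9}(d), which is presumably why the paper argues that case by hand. (Dually, the same shortcut via \ref{ss1.5}(a) would simplify the paper's own proof of \ref{ss1.9}(e), while your more elaborate argument for (e) here is genuinely needed.)

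One small gap to patch in (e): you assert that $\Phi_{x}\sm\Phi_{z}$ is closed. This is true but not immediate; since $z\leq x$, Lemma~\ref{ss3.1}(f) gives $\Phi_{x}=\Phi_{z}\dotcup z(\Phi_{z^{-1}x})$, so $\Phi_{x}\sm\Phi_{z}=z(\Phi_{z^{-1}x})$, which is closed by Lemma~\ref{ss3.2}(b). With that observation in place, your cardinality comparison goes through and the argument is complete.
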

\begin{proof} The proofs of (a)--(e) are similar to those of the corresponding parts of Theorem \ref{ss1.9} and most of  the details are omitted, except to remark that finiteness of the stable subgroups in (c) and the statement about $W^{*'}$ in (d) follow using that  $xR'z$  implies $z\leq x$. We  prove (f). Let $w\in W$. Then  \bee w^{-1}(\Phi'_{w})=w^{-1}(\Phi_{+}\cap w(\Phi_{+}))=\Phi_{+}\cap w^{-1}(\Phi_{+})=\Phi'_{w^{-1}}.\eee
Hence if $w^{2}=1$, then   $w(\Phi'_{w})=\Phi'_{w}$ i.e. $wR'w$.
This implies that $w$ is the minimum element of $\set{w}^{*'}$ and the maximum element of $\set{w}^{\dag'}$ in weak order. All statements of (f) follow readily.
\end{proof} 

\begin{rem*} (1) The map $i$ in (f) is not a bijection in general.  For  let $W$ be  of type $A_{4}$ with simple reflections $S=\set{r,s,t,u}$ and  Coxeter graph
\bee \xymatrix{r\ar@{-}[r]&s\ar@{-}[r]&t\ar@{-}[r]&u}.\eee 
One checks that $\mpair{s}^{\dag}=\mpair{u,rstsr}$, $\mpair{t}^{\dag}=\mpair{r,stuts}$ and
$\mpair{s,t}^{\dag}=\mpair{rstutsr}$ (by direct calculation, or by using results of \cite{HowBr}) where $\mpair{A}$ is the subgroup generated by $A$. This gives three stable subgroups in which   $rstutsr$ is an element of maximal length. But  if $i$ is  a bijection,  these stable subgroups would be uniquely  determined by their maximum element in weak order (which would be an involution), by (f).  

 (2) There is a partial order $\preceq$ defined on the set $\CI$  by $v\preceq w$ if and only if  $\set{v}^{*'\dag'}\seq\set{w}^{*'\dag'}$. Clearly,
$v\preceq w$ implies $v\leq w$, but  the reverse implication fails by \ref{ss2.2}.

(3) For finite $W$, the stable subgroups $X^{\dag'}$ or $X^{\dag} $ are  not necessarily subsemilattices of $W$ in weak order; they also need not be reflection subgroups of $W$.  Also, the stable subsemilattices $Z^{*}$ or $Z^{{*\prime}}$ need not be subgroups.

(4) The stable subgroups $X^{\dag}$ 
  need  not  be Coxeter groups for infinite $W$, as the groups $\set{s}^{\dag}$ for $s\in S$ may include non-trivial free groups by \cite{Br}.  We  do not know (for infinite or finite $W$)  if 
the (necessarily finite) stable subgroups $X^{\dag'}$ for $X\neq \eset$ are always  Coxeter   groups. \end{rem*}

\section{Rank one parabolic weak orders}\lb{S9}
This section gives a proof of Theorem \ref{ss1.11}.
\subsection{Joins with a simple reflection}  \lb{ss9.1}
The following Lemma collects   special cases and consequences of the main  results already proved, for use in the proof of Theorem \ref{ss1.11}
\begin{lem*} Let $s\in S$, say $s=s_{\a}$ where $\a\in \Pi$.
\begin{num} \item For $x\in W$, we have $x\in \set{s}^{*}$ if and only if  $s(\Phi_{x})=\Phi_{x}$ if and only if  
   $\Phi_{sx}=\Phi_{x}\dotcup\set{\a}$. 
\item  $\set{s}^{*}=\mset{x\in W\mid sx>x}$. 
  \item  $\set{s}^{*}$ is closed under taking meets, and those joins that exist, in $W$.
   \item If $x\in W$, $l(sx)=l(x)+1$ and $\set{s,x}$ has an upper bound, then $s\vee x=sy$ for some $y\in \set{s}^{*}$.\end{num}\end{lem*}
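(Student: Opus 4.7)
The plan is to prove (a)--(d) in order, with (a)--(c) following almost immediately from the results of Sections \ref{S4} and \ref{S8}, and (d) reducing to Theorem \ref{ss1.8}(a).

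For (a), the first equivalence $x \in \set{s}^{*} \iff s(\Phi_{x}) = \Phi_{x}$ is just the definition of $xRs$ combined with the definition of the $*$-operation in \ref{ss1.9}. The second equivalence is Theorem \ref{ss1.9}(a) specialized to $z = s$: since $\Phi_{s} = \set{\alpha}$, the condition $\Phi_{sx} = \Phi_{s} \dotcup \Phi_{x}$ reads exactly $\Phi_{sx} = \Phi_{x} \dotcup \set{\alpha}$.

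For (b), if $x \in \set{s}^{*}$ then $\Phi_{sx} = \Phi_{x} \dotcup \set{\alpha}$ by (a), so $\Phi_{x} \subsetneq \Phi_{sx}$ and $sx > x$ in weak order. Conversely, $sx > x$ in weak order gives $\Phi_{x} \subsetneq \Phi_{sx}$, which forces $l(sx) = l(x) + 1$; then $\alpha \not\in \Phi_{x}$ (else $s \leq x$ in weak order and $l(sx) = l(x) - 1$) while $\alpha \in \Phi_{sx}$ (since $l(sx) = l(s) + l(x)$ gives $s \leq sx$), and the unique element of $\Phi_{sx} \sm \Phi_{x}$ must therefore be $\alpha$, yielding $\Phi_{sx} = \Phi_{x} \dotcup \set{\alpha}$ and hence $x \in \set{s}^{*}$ by (a). Part (c) is then an immediate application of Theorem \ref{ss1.9}(d)--(e): since $\set{s}^{*} \in \CW_{*}$ by construction, it is a complete meet subsemilattice of $(W, \leq)$ and is closed under those joins which exist in $W$.

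The principal content is (d). Apply Theorem \ref{ss1.8}(a) to the given $x$ and to $\alpha \in \Pi$ with $s = s_{\alpha}$: the hypothesis $l(s_{\alpha} x) = l(x) + 1$ is given, and any upper bound $v$ of $\set{s, x}$ in $W$ satisfies $\Phi_{x} \cup \set{\alpha} = \Phi_{x} \cup \Phi_{s} \seq \Phi_{v}$. The theorem produces a minimum element $y_{0}$ of the set $\mset{z \in W \mid x \leq z,\ \alpha \in \Phi_{z}}$. The key observation is that, since $s$ is simple, $\alpha \in \Phi_{z}$ is equivalent to $s \leq z$, so this set coincides with the set of common upper bounds of $x$ and $s$ in $(W, \leq)$, forcing $y_{0} = s \vee x$. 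Setting $y := s y_{0}$, one has $s \vee x = sy$, and the conclusion $\Phi_{s y_{0}} = \Phi_{y_{0}} \sm \set{\alpha}$ from \ref{ss1.8}(a) rearranges to $\Phi_{sy} = \Phi_{y_{0}} = \Phi_{y} \dotcup \set{\alpha}$, so $y \in \set{s}^{*}$ by (a). The only point requiring care is the identification $y_{0} = s \vee x$; once this is in hand, the rest is routine rewriting of the cited theorem's conclusion.
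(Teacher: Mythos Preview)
Your proof is correct and follows essentially the same approach as the paper's: parts (a)--(c) are deduced from Theorem \ref{ss1.9} exactly as the paper does, and part (d) is obtained by applying Theorem \ref{ss1.8}(a) with $\alpha\in\Pi$ (the paper cites Remark \ref{ss1.8} for the identification $y_{0}=s\vee x$, which you verify directly). The only cosmetic difference is that at the end of (d) you invoke the criterion in (a) via $\Phi_{sy}=\Phi_{y}\dotcup\set{\alpha}$, whereas the paper invokes (b) via $y_{0}>s y_{0}$; these are equivalent.
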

   \begin{proof} Part (a) follows from Theorem \ref{ss1.9}(a).
   For (b), note first that if $x\in \set{s}^{*}$, then $\Phi_{x}\seq \Phi_{sx}$ by (a)
   so $x< sx$. On the other hand, if $sx>x$, then $\a\not\in\Phi_{x}$
   so $\Phi_{sx}\supseteq \Phi_{x}\dotcup\set{\a}$. But $\vert \Phi_{sx}\vert=l(sx)=l(x)+1=\vert \Phi_{x}\dotcup\set{\a}\vert$ so equality holds, proving 
 (b). Part (c) is a special case of Theorem \ref{ss1.9}(d)--(e).
Part (d) follows from Theorem \ref{ss1.8}(a), Remark \ref{ss1.8} and (b).
\end{proof}

\subsection{Notation for a rank one parabolic weak order}\lb{ss9.2} For the remainder of this section, fix $s\in S$, say $s=s_{\a}$ where $\a\in \Pi$, and set $J=\set{s}$.
Let  $\L:=\L_{J}=\Phi_{+}\cup\set{-\a}=\Phi_{+}\cup s(\Phi_{+})$ and $\CL=\CL_{s}:=\CL_{\set{s}}$. Then $\Phi(\CL)=\set{\a,-\a}$ and $W(\CL)=\set{1,s}$.
In particular, every subset of $\Phi(\CL)$ is biclosed in both $\L$ and  $\Phi(\CL)$, the biclosed subsets of  $\Phi(\CL)$ form a complete lattice and 
$\t (\ol \G)=\t(\G)=\ol{\tau(\G)}$ is biclosed in $\Phi({\CL})$ for any $\G\seq \L_{J}$.
Recall that  $W(\CL)$ acts on $\CL$ as a group of order automorphisms by $(w,\L)\mapsto w(\L)$
satisfying $\t (w(\G))=w(\t (\G))$ for biclosed  subsets $\G$ of $\L$ and $w\in W(\CL)$. 

\ssect{Description of elements of the weak order by type}\lb{ss9.3}   The proof of Theorem \ref{ss1.11} uses the following Lemma   describing the elements $\G$ of $\CL_{s}$ according to their type $\t(\G)\seq\set{\a,-\a}$.
\begin{lem*} \begin{num}\item $\mset{\G\in \CL_{s}\mid \t(\G)=\eset}=\mset{\Phi_{x}\mid x\in W, x<sx}$ 
\item $\mset{\G\in \CL_{s}\mid \t(\G)=\set{\a}}=\mset{\Phi_{x}\mid x\in W, \a\in \Phi_{x}}$ 
\item $\mset{\G\in \CL_{s}\mid \t(\G)=\set{-\a}}=\mset{s(\Phi_{x})\mid x\in W, \a\in \Phi_{x}}$ 
\item $\mset{\G\in \CL_{s}\mid \t(\G)=\set{\a,-\a}}=\mset{\Phi_{x}\cup\set{-\a}\mid x\in W, sx<x }$ 
  \end{num}\end{lem*}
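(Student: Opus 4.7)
The plan is to treat the four cases in parallel via a common reduction, then pin down the extra condition on $x$ in each case by a rank-two computation in a maximal dihedral reflection subgroup containing $s_\a$.

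The common reduction is that for any $\G\in \CL_s$, the subset $\G\cap \Phi_+$ is biclosed in $\Phi_+$: closedness of $\G\cap \Phi_+$ is inherited from closedness of $\G$, while closedness of $\Phi_+\sm\G$ follows from closedness of $\L_s\sm\G$ together with the fact that $\ol{\{\gamma,\delta\}}\seq\Phi_+$ whenever $\gamma,\delta\in\Phi_+$ (positive combinations of positive roots being positive), so that $\ol{\{\gamma,\delta\}}\seq (\L_s\sm\G)\cap \Phi_+=\Phi_+\sm\G$. By Lemma \ref{ss3.1}(d), $\G\cap\Phi_+=\Phi_x$ for a unique $x\in W$, and $\a\in \Phi_x$ if and only if $\a\in \G$. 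Hence every $\G\in\CL_s$ is either $\Phi_x$ or $\Phi_x\cup\{-\a\}$, and the four values of $\t(\G)$ encode precisely which of these two forms it has and whether $\a\in\Phi_x$. I would then dispose of (c) immediately: since $s$ preserves $\L_s$ and closedness, it induces an involution on $\CL_s$ exchanging types $\{\a\}$ and $\{-\a\}$, so (c) follows from (b) via $\G\leftrightarrow s\G$.

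For (a) and (b), $\G=\Phi_x$ is automatically closed, so the remaining condition is closedness of $\L_s\sm\G=\Phi'_x\cup\{-\a\}$; for (d), $\G=\Phi_x\cup\{-\a\}$ has $\L_s\sm\G=\Phi'_x$ closed by Lemma \ref{ss3.1}, so the remaining condition is closedness of $\G$ itself. In each case, the nontrivial check reduces to locating $\ol{\{-\a,\delta\}}$ for $\delta\in \Phi'_x\sm\{\a\}$ (for (a), (b)) or $\delta\in \Phi_x\sm\{\a\}$ (for (d)). The key lemma, elementary by a planar picture in a dihedral root system, is that for such $\delta$, if $W'\in\CM_\a$ is the maximal dihedral reflection subgroup containing $s_\delta$ and its positive roots are enumerated $\a=\nu_0,\nu_1,\ldots,\nu_{m-1}$ in reflection order with $\delta=\nu_j$, $j\geq 1$, then
\[ \ol{\{-\a,\nu_j\}}=\{-\a\}\cup\{\nu_j,\nu_{j+1},\ldots,\nu_{m-1}\}. \]
Moreover, by Lemma \ref{ss5.4}(a), $\Phi_x\cap\Phi_{W',+}$ is biclosed in $\Phi_{W',+}$, hence a prefix $\{\nu_0,\ldots,\nu_{j''-1}\}$ of the reflection order (when $\a\in\Phi_x$) or a suffix $\{\nu_{j''},\ldots,\nu_{m-1}\}$ (when $\a\notin\Phi_x$).

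The remaining analysis is bookkeeping. For (a), requiring disjointness $\ol{\{-\a,\nu_j\}}\cap\Phi_x=\eset$ for every $\nu_j\in\Phi'_x\cap\Phi_{W',+}$ (equivalently $j<j''$) forces the suffix to be $\eset$ or $\Phi_{W',+}\sm\{\a\}$; ranging $W'$ over $\CM_\a$ this is exactly $s(\Phi_x)=\Phi_x$, which by Lemma \ref{ss9.1}(a)--(b) is $x<sx$. For (b), the prefix $\Phi_x\cap\Phi_{W',+}$ contains $\a=\nu_0$ and any $\nu_j\in\Phi'_x\cap\Phi_{W',+}$ has $j\geq j''$, so the tail $\{\nu_j,\ldots,\nu_{m-1}\}$ is disjoint from the prefix automatically and no extra condition on $x$ beyond $\a\in\Phi_x$ is required. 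For (d), the dual containment $\{\nu_j,\ldots,\nu_{m-1}\}\seq\Phi_x$ for every $\nu_j\in\Phi_x\cap\Phi_{W',+}$ with $j\geq 1$ forces the prefix to be $\{\a\}$ or all of $\Phi_{W',+}$; via the identity $\Phi_{sx}=s(\Phi_x\sm\{\a\})$ (which holds because $\a\in\Phi_x$), this translates, over all $W'\in\CM_\a$, into $s$-stability of $\Phi_{sx}$, equivalently $sx\in\{s\}^*$, equivalently $sx<x$ by Lemma \ref{ss9.1}(b). The main obstacle is keeping the four interlocking case analyses aligned and correctly converting the $s$-stability conditions on prefixes and suffixes of reflection orders into the weak-order relations $x<sx$ and $sx<x$ via the dictionary provided by Lemma \ref{ss9.1}.
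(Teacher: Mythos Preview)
Your proof is correct and takes a genuinely different route from the paper's. The paper observes that $\L = \Phi_+ \cup \Psi_+$ where $\Psi_+ := s(\Phi_+)$ is a second positive system, and characterizes membership in $\CL_s$ by the \emph{pair} of conditions that $\G \cap \Phi_+$ and $\G \cap \Psi_+$ be biclosed in their respective positive systems; this yields $\G \cap \Phi_+ = \Phi_x$ and $\G \cap \Psi_+ = s(\Phi_z)$ for some $x, z \in W$, and the case analysis then amounts to showing $x = z$ with the stated side conditions, invoking Lemma~\ref{ss9.1} only to match $\Phi_x$ with $s(\Phi_z)$. You instead use only $\Phi_+$, write $\G$ as $\Phi_x$ or $\Phi_x \cup \{-\a\}$, and check the one remaining closedness condition directly via the rank-two formula $\overline{\{-\a, \nu_j\}} = \{-\a\} \cup \{\nu_j, \ldots, \nu_{m-1}\}$ (valid because $\a \in \Pi$ forces $\a \in \Pi_{W'}$ for every $W' \in \CM_\a$), ranging over all such $W'$. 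The paper's two-positive-systems device is slicker and bypasses the explicit dihedral computation entirely; your approach is more hands-on but makes visible why the extra $s$-stability constraint on the prefix or suffix appears in cases (a) and (d) yet is automatic in case (b). Your reduction of (c) to (b) via the $s$-action is also a small economy the paper does not exploit, treating (b) and (c) symmetrically instead.
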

  \begin{proof} Note first that $\Psi_{+}:=s(\Phi_{+})=(\Phi_{+}\sm\set {\a})\cup\set{-\a}$ is another positive system for  $\Phi$, with simple roots $s(\Pi)$. Also, $\L=\Phi_{+}\cup\Psi_{+}$.
  It follows readily that if $\G\seq \L$, then $\G$    is closed (resp., biclosed) in $\L$, if and only if  $\G\cap \Phi_{+}$ is closed (resp., biclosed) in $\Phi_{+}$ and $\G\cap \Psi_{+}$ is closed (resp., biclosed) in $\Psi_{+}$.
  So by Lemma \ref{ss3.1}(d),  for any $\G\seq \L$, one has $\G\in \CL_{s}$ if and only if  there are  $x,z\in W$ with $\G\cap \Phi_{+}=\Phi_{x}$ and $\G\cap \Psi_{+}=s_{\a}(\Phi_{z})$. In that case, $\a\in \Phi_{x}$ (resp.,
  $\a\in \Phi_{z}$) if and only if  $\a\in \t(\G)$ (resp., $-\a\in \t(\G)$). 
  
 The inclusions ``$\seq$'' will be proved first. Let $\G\in \CL_{s}$ and let $x,z$ be as above. 
  If $\t(\G)=\set{\a}$ (resp., $\t(\G)=\set{-\a}$) the above immediately shows $\G=\Phi_{x}\ni \a$  (resp., $\G=s(\Phi_{z})$  with $\a\in\Phi_{z}$)  which  gives the inclusion ``$\seq$'' in (b)--(c). For  ``$\seq$'' in (a), assume that $\G\in \CL_{s}$ with $\t(\G)=\eset$. Then $\a\not\in \Phi_{x}$, $\a\not\in \Phi_{z}$ and $\Phi_{x}=s(\Phi_{z})$.
  Hence $\Phi_{sz}=s(\Phi_{z})\dotcup\set{\a}  =\Phi_{x}\dotcup\set{\a}$.
  Therefore $\Phi_{x\vee s}=\ol{\Phi_{x}\cup\set{\a}}=\Phi_{sz}$.  Lemma \ref{ss9.1} gives  $z<sz$ and $\Phi_{sz}=\Phi_{z}\dotcup\set{\a}$, so $x=z$ and $x<sx$, proving ``$\seq$'' in (a). Next, the  proof of the inclusion  ``$\seq$'' in (d) is given. In this case,  $\a\in \Phi_{x}$, $\a\in \Phi_{z}$,
 and  $\Phi_{x}\sm\set{\a}=\G\cap \Phi_{+}\cap \Psi_{+}=s(\Phi_{z})\sm\set{-\a}=\Phi_{sz}$. 
 This gives $sz<x$, $sz\vee s=x$ and $\Phi_{x}\sm\set{\a}=\Phi_{sx}$, so $sx=sz$, $x=z$, $sx<x$ and $\G=\Phi_{x}\cup\set{-\a}$.  
 
Next, the reverse inclusions ``$\sreq$''  are proved, again using the characterization of elements of $\CL_{s}$ in the first paragraph of the proof. For (a), suppose $\G=\Phi_{x}$ where $x<sx$.
 By Lemma \ref{ss9.1}, $\Phi_{x}=s(\Phi_{x})$. So $\G\cap \Phi_{+}=\Phi_{x}$, and $\G\cap \Psi_{+}=s(\Phi_{x})$.
 For (b), suppose $\G=\Phi_{x}$ where $\a\in \Phi_{x}$.
 Then $\G\cap \Phi_{+}=\Phi_{x}$ and $\G\cap \Psi_{+}=\Phi_{x}\sm\set{\a}=s(\Phi_{sx})$.
 For (c), suppose $\G=s(\Phi_{x})$ where $\a\in \Phi_{x}$. Then $\G\cap \Psi_{+}=s(\Phi_{x})$
 and $\G\cap \Phi_{+}=s(\Phi_{x})\sm\set{-\a})=\Phi_{sx}$. Finally, for (d) suppose that
 $\G=\Phi_{x}\cup\set{-\a}$ where $sx<x$. From Lemma \ref{ss9.1}, it follows that  $\Phi_{sx}=s(\Phi_{sx})=\Phi_{x}\sm\set{\a}$.
 Hence $\G\cap \Phi_{+}=\Phi_{x}$ and $\G\cap  \Psi_{+}= (\Phi_{x}\sm\set{\a})\cup \set{-\a}=s(\Phi_{x})$. This completes the proof in all cases.
   \end{proof}
   \subsection{Proof of Theorem \ref{ss1.11}} Part  (a) will be proved first.  Note for any $\G\in \CL$, there are only finitely many $\D\in \CL$ with $\D\seq \G$ (since $\G$ is finite).
   Hence any subset $X$  of $\CL$ with an upper bound is finite.   So by induction, one is reduced to proving the result for joins $\G\vee\D$ of two elements
   (when the join  exists). More precisely, it has to be shown that 
   \be \lb{eq9.4.1} \G\vee \D=\ol{\G\cup\D}\ee
   whenever $\G,\D$ have an upper bound $\Sigma$ in $\CL$.  Note that obviously $\Sigma \sreq \ol{\G\cup\D}$ for any such upper bound (in particular, for $\Sigma=\G\vee \D$ if the join exists). 
   To prove \eqref{eq9.4.1},  there are sixteen cases depending on the types $\t(\G)$ and $\t(\D)$.
   To facilitate reductions of some of the  cases to others, \eqref{eq9.4.1} will first be proved in the special case that $\D=\set{\a}$.  The symmetry $\G\longleftrightarrow \D$ given by interchanging $\G$ and $\D$
   and the symmetry given by the $W(\CL)$-action will also be used to reduce the number of cases to be considered.
   
   Suppose then that $\D=\set{\a}$. If $\a\in \G$, then $\G\vee \D=\G=\ol{\G\cup \D}$, it may be  assumed that $\t(\G)\seq\set{-\a}$. If $\t(\G)=\eset$, then
    $\G=\Phi_{x}$ where $x<sx$. Then $\ol{\G\cup\D}=\ol{\Phi_{x}\cup\set{\a}}= \Phi_{x}\cup\set{\a}=\Phi_{sx}$. Since $\Phi_{sx}\in \CL$, it follows that $\ol{\G\cup\D}=\Phi_{sx}=\G\vee \D$
    in this case.  Next, consider the case that $\t(\G)=\set{-\a}$.
    In that case,  $\G=s(\Phi_{x})$ where $\a\in \Phi_{x}$. By assumption, there is  is some $\Sigma\in \CL$ with $\G\cup \D\seq \Sig$. Necessarily, $\t(\Sigma)=\set{\a,-\a}$ so $\Sigma=\Phi_{z}\cup\set{-\a}$ where
    $sz<z$. Now $\G=s(\Phi_{x})=\Phi_{sx}\cup\set{-\a}$.
    Hence \bee \ol{\G\cup{\D}}=\ol{\Phi_{sx}\cup\set{\a}\cup\set{-\a}}\seq \Phi_{z}\cup\set{-\a}\eee which implies that $\ol{\Phi_{sx}\cup\set{\a}}\seq \Phi_{z}$. Therefore, $sx\vee s$ exists
    and $\Phi_{sx\vee s}=\ol{\Phi_{sx}\cup\set{\a}}$. By Lemma \ref{ss9.1}, it follows that $sx\vee s=y$ where $sy<y$. So
    \bee \ol{\G\cup{\D}}=\ol{\Phi_{sx}\cup\set{\a}\cup\set{-\a}}\sreq \ol{\Phi_{sx}\cup\set{\a}}\cup\set{-\a}=\Phi_{y}\cup\set{-\a}.\eee
    But $\G\cup\D=\Phi_{sx}\cup\set{\a,-\a}\seq \Phi_{y}\cup\set{-\a}\in \CL$.
    Hence the join of $\G$ and $\D$ exists and is given by $\G\vee \D=\ol{\G\cup\D}=\Phi_{y}\cup\set{-\a}$. This completes the proof in the case that $\D=\set{\a}$. Using the above-mentioned symmetry, \eqref{eq9.4.1} also holds in the cases $\D=\set{-\a}$, $\G=\set{\a}$ or $\G=\set{-\a}$.

   Next, observe the following. Suppose that  for $i=1,2$, $\D_{i}\in \CL$ is such that for all
   $\G\in \CL$ such that $\G$, $\D_{i}$ have an upper bound in $\CL$, they have a join
   $\G\vee \D_{i}=\ol{\G\cup \D_{i}}$. Assume also that $\D_{1},\D_{2} $ have an upper bound,   so $\D_{1}\vee \D_{2}=\ol{\D_{1}\cup\D_{2}}$. For any $\G\in \CL$ for which
   $\G$ and $\D_{1}\vee \D_{2}$ have an upper bound $\Sigma$,  Lemma \ref{ss3.2}(c) implies that
   \bee \ol{\G\cup (\D_{1}\vee \D_{2})}=\ol{\G\cup \D_{1}\cup \D_{2}}=\ol{(\G\vee \D_{1})\cup \D_{2}}=(\G\vee \D_{1})\vee \D_{2}=\G\vee (\D_{1}\vee \D_{2}),\eee  
   noting that $\Sigma$ is also an upper bound for $\G,\D_{1}$ and for $\G\vee \D_{1},\D_{2}$.
   
   Using the previous two paragraphs, it follows  that \eqref{eq9.4.1} holds if 
   $\D\seq \set{\a,-\a}$ or $\G\seq\set{\a,-\a}$. The next step is to reduce to the case that $\t(\G)=\t(\D)$.  Suppose in general  that $\G,\D\in \CL$ have an upper bound $\Sigma$. Set
   $\Xi:=\ol{\t(\G)\cup\t(\D)}\seq\set{\a,-\a}$. Note $\Xi\in \CL$. Then $\Sigma$ is an upper bound for $\G$, $\D$ and $\Xi$. Set $\G':=\G\vee \Xi=\ol{\G\cup\Xi}$ and $\D'=\D\vee \Xi=\ol{\D\cup\Xi}$. Then
    $\t(\G')=\t(\D')=\Xi$,
    $\ol{\G\cup \D}=\ol{\G'\cup\D'}$, and $\Sigma$ is an upper bound for $\G',\D'$.
   If it is known that  $\G',\D'$ have a least upper bound  $\G'\vee  \D' =\ol{\G'\cup \D'}$, it follows that $\G,\D$ have the join $\G\vee \D=\ol{\G\cup\D}$. That is, the proof of
   \eqref{eq9.4.1} is reduced  to its special case in which $\t(\G)=\t(\D)$.
   
   So now suppose $\G,\D\in \CL$  have an upper bound $\Sigma$ and are arbitrary except that $\t(\G)=\t(\D)=\Xi$. Write $\Sigma\cap \Phi_{+}=\Phi_{z}$ where $z\in W$.
  It is necessary to consider four cases, according to the value of $\Xi$.
   The first case is that in which $\Xi=\eset$. Write $\G=\Phi_{x}$ and $\D=\Phi_{y}$ where $x<sx$ and $y<sy$. Then $x\in \set{s}^{*}$ and $y\in \set{s}^{*}$.
  Observe that  $\Phi_{x},\Phi_{y}\seq\Phi_{z}$. Hence $w:=x\vee y$ exists in $(W,\leq)$, with $\Phi_{w}=\ol{\Phi_{x}\cup \Phi_{y}}$.
  From Lemma \ref{ss1.9}, it follows that $w\in \set{s}^{*}$, so $w<sw$ and $\Phi_{w}\in \CL$.
  Hence $\ol{\G\cup\D}=\Phi_{w}=\G\vee \D$, completing the proof in this case.
  
  The second case we consider is that in which $\Xi=\set{\a}$. Here, we may write 
  $\G=\Phi_{x}$, $\D=\Phi_{y}$ where $\a\in \Phi_{x}\cap \Phi_{y}$.
  Then $x,y\leq z$ so
  $w:=x\vee y$ exists with $\Phi_{w}=\ol{\Phi_{x}\cup\Phi_{y}}$.
  Since $\a\in \Phi_{w}$, this immediately implies that $\Phi_{w}\in \CL$ and 
  $\ol{\G\cup\D}=\Phi_{w}=\G\vee \D$ as required.
  
  The third case, in which $\Xi=\set{-\a}$ reduces immediately to the second case by using the symmetry given by the action of $s$ on $\CL$. The final case is that in which $\Xi=\set{\a,-\a}$.
  Here, write $\G=\Phi_{x}\cup\set{-\a}$ and $\D=\Phi_{y}\cup\set{-\a}$ where $sx<x$ and $sy<y$. Then $sx\in\set{s}^{*}$ and $sy\in \set{s}^{*}$. Also, $\Phi_{sx},\Phi_{sy}\seq \Phi_{z}$ so $sx\vee sy$ exists. Write $sx\vee sy=sw$, so $\Phi_{sw}=\ol{\Phi_{sx}\cup\Phi_{sy}}$ and $sw\in \set{s}^{*}$ by Lemma \ref{ss1.9}. In particular, $sw<w$ so
  $\Phi_{w}\cup\set{-\a}\in \CL$. Now  \bee \ol{\G\cup\D}=\ol{\Phi_{sx}\cup\Phi_{sy}\cup\set{\a}\cup\set{-\a}}\sreq\Phi_{sw}\cup\set{\a,-\a}=\Phi_{w}\cup\set{-\a}\in \CL.\eee
  On the other hand, $sx,sy\leq sw$ imply $x,y\leq w$ and  $\G\cup\D\seq \Phi_{w}\cup\set{-\a}$.
  It follows that $\ol{\G\cup \D}=\G\vee \D$. This completes the proof of (a).
  
  Using Lemma \ref{ss4.3}, it follows  that $\CL$ is a complete meet semilattice.
  The proof of (b) involves a few additional facts supplementing those of Lemma \ref{ss9.1}. First, one checks using Lemma \ref{ss9.1}(a) that \be x\in s^{*}\iff s(\Phi'_{sx})=\Phi'_{sx}\iff 
   \Phi_{x}'=\Phi'_{sx}\dotcup\set{\a}.\ee  Using this and  Corollary
   \ref{ss6.2}, it follows    that 
   for $x,y,z\in W$, if $\ol{\Phi_{x}\cup\Phi'_{y}}=\Phi'_{z}$, $x\in s^{*}$ and $sy\in s^{*}$ then $sz\in s^{*}$.
  To prove (b), note first that the meet of  any non-empty subset  $X$ of $\CL$ is equal to the (directed) intersection of the family of meets of its finite subsets.
  Since $\CL$ has a minimum element and finite intervals,   the proof of (b) easily  reduces to that of its special case for meets of finite subsets, and then by induction it reduces to the case of meets of pairs of elements of $\CL$. The proof of this is very similar
     to that of (a). It is required  to show that for $\G,\D\in \CL$, we have  \be\lb{eq9.4.2}  \L\sm(\G\wedge \D)=\ol{(\L\sm \G)\cup(\L\sm \D)}.\ee
     Again, there are $16$ possible cases initially. To reduce the number of cases, one first shows that for $\G\in \CL$ and $\Xi\seq\set{\a,-\a}$ one has  
     \be\lb{eq9.4.3} \ol{(\L\sm \G)\cup \Xi}=\L\sm \D\ee
for some $\D\in \L$. This is trivial for $\Xi=\eset$. One  checks \eqref{eq9.4.3} first for $\Xi=\set{\a}$ using Lemma \ref{ss9.2}, as in (a). Next,  \eqref{eq9.4.3} follows using the $W(\L)$-action for $\Xi=\set{-\a}$, and finally it follows  for $\Xi=\set{\a,-\a}$ on 
writing
 \bee \ol{(\L\sm \G)\cup \set{\a,-\a}}=\ol{ (\L\sm \G)\cup \set{\a}\cup\set{-\a}}=\ol{(\L\sm \D_{1})\cup\set{-\a}} =\L\sm \D\eee
 for some $\D_{1},\D$ in $\CL$. Finally, using \eqref{eq9.4.3} one reduces \eqref{eq9.4.2} to the case $\t(\G)=\t(\D)$ (as in (a)) which one checks (also as in (a), but
 using Lemma \ref{ss1.6a} instead of Corollary \ref{ss1.6} where necessary).
     The proof of (b) is in fact  slightly simpler  than that of (a) since the meets in $W$ involved in the proof of (b) automatically exist, whereas the existence of the necessary  joins in $W$  had to be checked. Further details are omitted. 
  
  Finally, it remains to prove (c). Suppose that $X\seq \CL$ has an upper bound.
  Then \bee \t(\join X)=\t(\overline{\cup_{\G\in X}\G})=\ol{\t({\cup_{\G\in X}\G})}=\ol{\cup_{\G\in X}\t(\G)}=\join_{\G\in X}\tau({\G})\eee
   The proof for meets in (c)  is similar and is omitted.
   
   \ssect{Join of a finite biclosed set and a finite biclosed set} \label{ss9.5} The following additional special case of Conjecture \ref{ss1.14} is proved along similar lines as the proof of Theorem \ref{ss1.11} (in fact, a critical special case of it was already required in the proof of  \ref{ss1.11}(b)), and details of  its  proof are omitted.
   \begin{prop*} Let notation be as in Theorem $\text{\rm \ref{ss1.11}}$.
   If  $\G,\Xi\in \CL$ then 
    $\ol{(\L\sm \G)\cup \Xi}=\L\sm \D$
for some $\D\in \L$.\end{prop*}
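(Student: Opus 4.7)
My plan is to adapt the case-analysis template used in the proof of Theorem \ref{ss1.11}(b): Proposition \ref{ss9.5} generalizes equation \eqref{eq9.4.3}, which there handled the subcase $\Xi\seq\set{\a,-\a}$, to arbitrary $\Xi\in\CL$. The principal tools are \eqref{eq9.4.3}, the type decomposition of $\CL$ in Lemma \ref{ss9.3}, the $W(\L)$-equivariance of $2$-closure (Lemma \ref{ss3.2}(b) combined with the $s$-stability of $\CL$), and Lemma \ref{ss5.4} for reduction to dihedral computations.

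The opening reduction is to use \eqref{eq9.4.3} to absorb $\tau(\Xi)$. Setting $\Xi_{0}:=\Xi\sm\tau(\Xi)\seq\Phi_{+}\sm\set{\a}$ and applying Lemma \ref{ss3.2}(c) together with \eqref{eq9.4.3} gives
\[
\ol{(\L\sm\G)\cup\Xi}\;=\;\ol{\,\ol{(\L\sm\G)\cup\tau(\Xi)}\cup\Xi_{0}\,}\;=\;\ol{(\L\sm\G')\cup\Xi_{0}}
\]
for some $\G'\in\CL$. A symmetric manoeuvre via the $W(\L)$-action (replacing $(\G',\Xi_{0})$ by $(s(\G'),s(\Xi_{0}))$ if necessary) lets us further arrange that $\tau(\G')\in\set{\eset,\set{\a},\set{\a,-\a}}$. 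Decompose $\G'$ using Lemma \ref{ss3.1}(d) as $\G'\cap\Phi_{+}=\Phi_{y}$, $\G'\cap s(\Phi_{+})=s(\Phi_{v})$, so $\L\sm\G'=\Phi'_{y}\cup s(\Phi'_{v})$. Since $\Xi_{0}\seq\Phi_{+}\cap s(\Phi_{+})$, the target closure is
\[
\ol{\Phi'_{y}\cup s(\Phi'_{v})\cup\Xi_{0}},
\]
which one analyzes in the two positive systems $\Phi_{+}$ and $s(\Phi_{+})$ separately (using $\ol{s(X)}=s(\ol X)$) and then patches.

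The main obstacle is establishing that the resulting closure has the form $\L\sm\D$ for a biclosed $\D\in\CL$. Even when $\Xi_{0}$ is not itself biclosed in $\Phi_{+}$ (as occurs for $\tau(\Xi)=\set{\a}$ and $\Xi=\Phi_{x}$ with $l(x)\geq 2$, where $\Xi_{0}=\Xi\sm\set{\a}$ need not be of the form $\Phi_{w}$), one must verify that $\ol{\Phi'_{y}\cup\Xi_{0}}$ is cofinite biclosed in $\Phi_{+}$, and dually in $s(\Phi_{+})$, and that the two pieces together form a biclosed subset of $\L$. The decisive step is a pair-by-pair application of Lemma \ref{ss5.4}: for pairs $(\b,-\a)$ with $\b\in\Phi'_{y}\sm\set{\a}$ spanning a rank two subsystem together with $\a$, the $2$-closure may produce further positive roots of that subsystem, which one checks already lie in $\Phi'_{y}\cup\Xi_{0}$ by using that $\G$ and $\Xi$, being biclosed in $\L$, restrict to biclosed subsets of $\Phi_{W'}\cap\L$ in each maximal dihedral reflection subgroup $W'\in\CM_{\a}$. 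This dihedral bookkeeping runs exactly parallel to that which underlies the proof of \eqref{eq9.4.3} itself; the author's ``details omitted" likely refers to this repetition.
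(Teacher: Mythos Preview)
Your opening reduction via \eqref{eq9.4.3} is correct, but the move to $\Xi_{0}:=\Xi\sm\tau(\Xi)$ is a self-inflicted wound. Once you have $\ol{(\L\sm\G)\cup\tau(\Xi)}=\L\sm\G'$, the inclusion $\tau(\Xi)\seq\L\sm\G'$ gives $(\L\sm\G')\cup\Xi_{0}=(\L\sm\G')\cup\Xi$, so you may keep $\Xi\in\CL$ throughout. By discarding $\tau(\Xi)$ you leave $\CL$, lose access to Lemma \ref{ss9.3} and Corollary \ref{ss6.2}, and are forced into the vague ``dihedral bookkeeping via Lemma \ref{ss5.4}'' that you do not actually carry out. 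That last step is the entire content of the claim, and you have only asserted it.

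The paper's intended argument (``along similar lines as Theorem \ref{ss1.11}'') stays inside $\CL$ and uses Corollary \ref{ss6.2} as the workhorse, not Lemma \ref{ss5.4}. After the reduction above (and the $W(\L)$-symmetry) one does a case analysis on the pair $(\tau(\G'),\tau(\Xi))$ exactly as in the proof of part (a). In each case one writes $\G'$ and $\Xi$ via Lemma \ref{ss9.3}, intersects with $\Phi_{+}$ to get a set of the form $\Phi'_{y}\cup\Phi_{u}$, applies Corollary \ref{ss6.2} to obtain $\ol{\Phi'_{y}\cup\Phi_{u}}=\Phi'_{z}$, and then uses Lemma \ref{ss9.1} and the supplementary facts recorded at the start of the proof of \ref{ss1.11}(b) (concerning when $z$ or $sz$ lies in $\set{s}^{*}$) to verify that the resulting closure lands in $\set{\L\sm\D:\D\in\CL}$. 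This is the same machinery that drove the four ``equal type'' cases in \ref{ss1.11}(a) and the verification of \eqref{eq9.4.2}; Lemma \ref{ss5.4} plays no role here beyond what is already packaged in Corollary \ref{ss6.2} and Theorem \ref{ss6.1}.
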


\section{Variants for other closure operators} \lb{S10}
Given a closure operator $c$ on $\Phi$, one may define $c$-coclosed, $c$-biclosed subsets of $\Phi_{+}$ etc in the same way as for $2$-closure, and ask whether the analogues of the main results (e.g. in Section 1)  hold with all $\ol{\G}$ replaced by $c(\G)$, ``closed'' replaced by ``$c$-closed,''  ``biclosed'' replaced by ``$c$-biclosed'' etc.  This section
 indicates, somewhat informally,  what can be proved  about
two other such closure relations by simple modifications of the arguments in earlier sections.

 We first introduce some more  terminology concerning   closure operators. \ssect{Terminology for closure operators}\lb{ss10.1a} 
A closure operator $c$ on a set $X$ is said to be of finite character if for all $A\seq X$, \[c(A)=\cup_{\substack{A_{0}\seq A\\ \vert A_{0}\vert <\aleph_{0} }}c(A_{0}).\]

Say that a closure operator $c$ on $X$ is an antiexchange closure  operator if  
for $A\seq X$ and all $x,y\in X\sm c(A)$, $x\in c(A\cup\set{y})$ implies $y\not\in c(A\cup\set{x})$ (this terminology is often restricted to the case of finite $X$, as in \cite{Edel} but we shall not do so here).

Consider a  family of root systems of a Coxeter group $(W,S)$ 
for each one $\Psi$ of which there is an associated closure operator on $\Psi$.    Say the family of closure operators is combinatorial  if the closure operator on $T\times \set{\pm 1}$ defined by transport of structure  using the canonical bijection $\Psi\xrightarrow{\cong}T\times \set{\pm 1}$ given by $\epsilon \alpha\mapsto (s_{\a},\e)$ for $\alpha\in \Psi_{+},\e\in \set{\pm 1}$,  is  independent of the choice of  root system $\Psi$ in the family.   Of course, other, quite different  definitions of combinatorial closure operators could be made; the above is convenient for our purposes  here.

\subsection{$\mathbb{Z}$-closure on finite crystallographic root systems} \lb{ss10.1}
Let $\Psi$ be a (reduced) crystallographic root system of a finite Weyl group as in \cite{Bour}.
There is a standard closure operator on $\Psi$, which we call $\bbZ $-closure to avoid confusion with $2$-closure, for which the $\bbZ $-closed sets $\G$ are those
for which $\a,\b\in \Psi$ and $\a+\b\in \Phi$ implies $\a+\b\in\Psi$.
Equivalently $\Psi$ is $\bbZ $-closed if $\a,\b\in \Psi$ and $m\a+n\b\in \Phi$ with $m,n\in \bbN$  implies $m\a+n\b\in\Psi$ (as one sees by reduction to rank two; see also \cite{Pil}).
The  $\bbZ $-closure has well known natural interpretations in the context of semisimple complex Lie algebras, for instance. 

Both $2$-closure and $\bbZ $-closure are closure operators of finite character, trivially. However,  $2$-closure  differs from 
$\bbZ $-closure  in some significant respects (aside from its obvious applicability to more general classes of Coxeter groups).
For example, it is shown in \cite{Pil} that $\bbZ$-closure restricted to the set $\Psi_{+}$ of  positive roots  of $\Psi$ is an anti-exchange closure operator   whereas $2$-closure on $\Phi_{+}$ is not anti-exchange    for $(W,S)$ of type $F_{4}$, $H_{3}$ or $H_{4}$.  Also,
  $2$-closure is easily seen to be combinatorial, but  $\bbZ $-closure is not combinatorial in general (e.g in type $B_{2}$).

\ssect{Analogues for $\mathbb{Z}$-closure of some of the results for $2$-closure}\lb{ss10.2}    This subsection discusses  the extent to which the main results and conjectures of this paper  are known to  apply to finite Weyl groups  with $\bbZ $-closure on $\Psi$ in place of  $2$-closure.
\begin{prop*} The  $\bbZ $-closure  analogue of $\text{\rm Theorem \ref{ss1.5}}$ is true.\end{prop*}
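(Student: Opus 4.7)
The plan is to re-run the proof of Theorem \ref{ss1.5} from Section \ref{S4} with the $2$-closure $\ol{(-)}$ replaced by the $\bbZ$-closure (denoted $c_{\bbZ}$) throughout, and to verify that each step survives in the finite crystallographic setting. Three ingredients need to be checked: (i) that each $\Phi_w$ is $\bbZ$-closed, so the conjectured closure computation lands inside a $\bbZ$-closed set; (ii) the rank-two base case Lemma \ref{ss4.1}; and (iii) the formal inductive step of Proposition \ref{ss4.2}, together with the transfer to meets in Corollary \ref{ss4.4}.

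For (i), suppose $\a,\b\in \Phi_w=\Phi_+\cap w(\Phi_-)$ and $\a+\b\in \Phi$. Then $\a+\b$ is a sum of two positive roots, so $\a+\b\in \Phi_+$; and $w^{-1}(\a+\b)=w^{-1}\a+w^{-1}\b$ is a sum of two negative roots lying in $\Phi$, hence is negative; so $\a+\b\in \Phi_w$. In particular $\Phi_+$ is itself $\bbZ$-closed. All the purely formal properties of closures collected in Lemma \ref{ss3.2} hold for $c_{\bbZ}$ just as for $2$-closure.

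For (ii), I would check that in a rank-two finite crystallographic root system with simple roots $\a,\b$, one has $c_{\bbZ}(\set{\a,\b})$ equal to the entire positive system of that rank-two system. This is a direct finite case check on $A_1\times A_1,\,A_2,\,B_2,\,G_2$: starting from $\set{\a,\b}$ and iterating ``adjoin a root expressible as a root-sum of two elements already in the set'' produces all positive roots (for instance, in $G_2$ one obtains successively $\a+\b$, $2\a+\b$, $3\a+\b$ and $3\a+2\b$). With this in hand, the proof of Lemma \ref{ss4.1} carries over unchanged: the longest element $w$ of the rank-two standard parabolic subgroup satisfies $\Phi_w=c_{\bbZ}(\set{\a,\b})$, and $W$-equivariance of $c_{\bbZ}$ gives $\Phi_{xw}=\Phi_x\dotcup c_{\bbZ}(\set{x(\a),x(\b)})$ exactly as before.

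The inductive step of Proposition \ref{ss4.2} uses only monotonicity, idempotence and the identity $\ol{\ol{\G}\cup\D}=\ol{\G\cup\D}$ of Lemma \ref{ss3.2}(c), all of which hold for any closure operator; and Corollary \ref{ss4.4} transfers the statement about joins to the statement about meets via the longest element $w_S$ and the purely root-theoretic identity $\Phi'_x=\Phi_{xw_S}$. The main obstacle is really the rank-two check in (ii) for $G_2$, since $c_{\bbZ}$ and $c_2$ do not coincide on arbitrary pairs of roots there (for example, $c_2(\set{\a,3\a+2\b})$ contains $2\a+\b$ while $c_{\bbZ}(\set{\a,3\a+2\b})=\set{\a,3\a+2\b}$); what rescues the argument is that the induction only demands agreement of the two closures on pairs of simple roots of rank-two parabolic subgroups, which is precisely what (ii) provides.
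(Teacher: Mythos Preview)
Your proposal is correct and follows essentially the same approach as the paper: the paper simply cites Bourbaki for the fact that the $\bbZ$-biclosed subsets of $\Psi_{+}$ are exactly the $\Psi_{w}$, then asserts that the proof of Theorem \ref{ss1.5}(a) applies mutatis mutandis, and obtains (b) from (a) via the longest element as in Corollary \ref{ss4.4}(c). Your write-up is more explicit in spelling out the rank-two check $c_{\bbZ}(\set{\a,\b})=\Phi_{W',+}$ and in noting that only the closure of pairs of \emph{simple} roots of rank-two parabolics is needed (so the $G_{2}$ discrepancy between $c_{\bbZ}$ and $2$-closure on non-simple pairs is irrelevant), but the strategy is identical.
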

\begin{proof}  According to \cite[Ch VI, \S 1, Ex 16]{Bour}, a subset of $\Psi_{+}$ is $\bbZ $-biclosed if and only if  it is of the form $\Psi_{w}:=\Psi_{+}\cap w(-\Psi_{+})$ for some $w\in W$. 
  The proof of Theorem \ref{ss1.5}(a) applies mutatis mutandis to establish the analogue for $\bbZ $-closure of \ref{ss1.5}(a); then the $\bbZ $-closure analogue \ref{ss1.5}(b) holds by the $\bbZ $-closure analogue of the argument for the proof of Corollary  \ref{ss4.4}(c) (or by using Remark \ref{ss4.4}).\end{proof}

 The analogues for $\bbZ $-closure of Lemma \ref{ss1.7}(a)--(b) fail for  $W$ of type $B_{2}$. It is asserted in \cite{Mal} that the $\bbZ $-closure analogue of Conjecture \ref{ss1.14}(c) in the case $\L=\Psi$ holds
 (though there is a gap in the published proof; see \cite{DCH}. I thank Eugene Karolinsky for these references).
  It is not immediately clear  if the $\bbZ $-closure analogues of  Theorem \ref{ss6.1}, \ref{ss1.8},  or  \ref{ss1.11} hold.

\ssect{The convex geometric closure operator $d$}\lb{ss10.3}  Another  natural closure operator $d$  on $\Phi$ is given by  $d(\Gamma):= \Phi\cap\bbR_{\geq 0}\G$. The  operator $d$ is an anti-exchange  closure operator of finite character, but it  is not combinatorial
in general. In fact,  for    infinite $W$ of rank  four with no braid relations (i.e all entries of its Coxeter matrix are  either $1$ or $\infty$)  there are many possible root systems (in the class \cite{Sd}, with the inner product   normalized so $\mpair{\a,\a}=1$ for all $\a\in \Pi$) with linearly independent simple roots $\Pi$, determined  by arbitrary  choices of inner products $\mpair{\a,\b}=\mpair{\b,\a}\leq -1$ for distinct $\a,\b\in \Pi$. It  is easy to check that  the closure operators on $T\times\set{\pm 1}$ corresponding to the resulting closures $d$ as above, genuinely depend on the choice of root system. \begin{rem*} The operators $d$ for root systems of a  Coxeter system $(W,S)$ of rank three are combinatorially invariant, by an argument  involving  homotopies of root systems
and the combinatorial nature of $d$-closure restricted to the maximal 
dihedral root subsystems. Informally, any two root systems in the class 
are connected by a homotopy, determined by a suitable homotopy
from one matrix $(\mpair{\alpha,\beta})$ to another in the space of such 
matrices attached to root systems. As the root system varies in such a 
homotopy, a root can never enter or leave a plane spanned by two 
other roots (since the reflection in roots of this plane generate a maximal dihedral reflection subgroup, and the sets of reflections of such subgroups are completely determined by the Coxeter system). More precisely, in the rank three case, the oriented matroid 
closure operators on $T\times\set{\pm 1}$ obtained by transfer of 
structure from the various root systems all coincide; this argument  uses 
a characterization of finite rank  (possibly infinite) oriented matroids by 
their basis orientations (cf. \cite[Exercise 3.13]{BVSWZ}).

There is an obvious question as to   whether combinatorial invariance in this sense extends to oriented geometry root systems (with definition as suggested in \ref{matroid}) of rank three Coxeter systems.\end{rem*}

\ssect{Coverings in Bruhat order,  and polyhedral cones}\lb{ss10.4} The following fact from
 \cite[Proposition 3.6]{DyKLS} will be used below.
  \begin{lem*} For any $x\in W$, $\Phi_{x,1}$ (resp., $\Phi_{x,-1}$) is a set of representatives of the extreme rays of a pointed polyhedral cone $\bbR _{\geq 0}\Phi_{x}'$ (resp., $\bbR _{\geq 0} \Phi_{x}$) spanned by $\Phi'_{x}$ (resp., $\Phi_{x}$). Further, $\bbR _{\geq 0} \Phi_{x}\cap \bbR _{\geq 0} \Phi'_{x}=\set{0}$.
\end{lem*}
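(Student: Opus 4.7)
My plan is to split the lemma into three claims and treat them in order: (i) pointedness of the two cones together with the disjointness $\bbR_{\geq 0}\Phi_x \cap \bbR_{\geq 0}\Phi'_x = \{0\}$; (ii) identification of the extreme rays of $\bbR_{\geq 0}\Phi_x$ with $\Phi_{x,-1}$; and (iii) the analogous identification for $\bbR_{\geq 0}\Phi'_x$ and $\Phi_{x,1}$. The geometric ingredient throughout is the $W$-action on the Tits cone in $V^*$; let $D \seq V^*$ denote the closed fundamental chamber.

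For (i), pick $v_0$ in the interior of $D$. For $\alpha \in \Phi_+$, the identity $\langle xv_0, \alpha\rangle = \langle v_0, x^{-1}\alpha\rangle$, together with the definition $\Phi_x = \Phi_+ \cap x\Phi_-$, shows $\langle xv_0, \alpha\rangle$ is strictly negative on $\Phi_x$ and strictly positive on $\Phi'_x$. Hence every nonzero element of $\bbR_{\geq 0}\Phi_x$ lies in the open half-space $\{\langle xv_0, \cdot\rangle < 0\}$ and every nonzero element of $\bbR_{\geq 0}\Phi'_x$ in the opposite open half-space, so both cones are pointed and meet only at the origin.

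For (ii), Lemma \ref{ss1.7}(a) provides $\overline{\Phi_{x,-1}} = \Phi_x$; since $2$-closure is contained in the non-negative real span, this gives $\bbR_{\geq 0}\Phi_x = \bbR_{\geq 0}\Phi_{x,-1}$, so the cone is already polyhedral, generated by the finite set $\Phi_{x,-1}$. If $\alpha \in \Phi_x \sm \Phi_{x,-1}$, so $l(s_\alpha x) \le l(x) - 3$, then the dihedral-reduction argument in the proof of Lemma \ref{ss1.7}(a) --- via some $W' \in \CM_\alpha$ with $l_{W'}(s_\alpha x_{W'}) - l_{W'}(x_{W'}) \ge 3$ --- furnishes distinct $\beta, \gamma \in \Phi_x$ with $\alpha \in \bbR_{>0}\beta + \bbR_{>0}\gamma$, placing $\alpha$ in the relative interior of a two-dimensional face and ruling it out as an extreme ray. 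The converse --- that every $\alpha \in \Phi_{x,-1}$ \emph{is} an extreme ray --- is the main obstacle. My plan is to exhibit, for each such $\alpha$, an element $z \in W$ with $\Phi_z \cap \Phi_x = \{\alpha\}$; any $g$ in the interior of the chamber $zD$ then satisfies $\langle g, \alpha\rangle < 0$ and $\langle g, \beta\rangle > 0$ for every $\beta \in \Phi_x \sm \{\alpha\}$, so $f := -g$ strictly separates $\alpha$ from the remaining generators and exhibits $\alpha$ as extreme. For $\alpha \in \Pi$ this is immediate with $z = s_\alpha$; for general $\alpha$, I would choose a reduced expression $x = s_{i_1}\cdots s_{i_k}$ in which $\alpha = s_{i_1}\cdots s_{i_{j-1}}(\alpha_{i_j})$ is a deletable inversion and take $z$ to be the reverse prefix $s_{i_j} s_{i_{j-1}}\cdots s_{i_1}$; verifying that a reduced expression can be chosen so as to enforce $\Phi_z \cap \Phi_x = \{\alpha\}$ is the delicate step, which uses Theorem \ref{ss1.8} and the $2$-closure machinery of earlier sections.

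For (iii), in the finite case the involution $y \mapsto y w_S$ swaps $\Phi_y$ with $\Phi'_y$ and (since $l(y w_S) = l(w_S) - l(y)$) swaps $\Phi_{y,-1}$ with $\Phi_{y,1}$, so applying (ii) to $x w_S$ yields the claim directly. For infinite $W$, Lemma \ref{ss1.7}(b) instead gives $\bbR_{\geq 0}\Phi'_x = \bbR_{\geq 0}\Phi_{x,1}$ (so again the cone is polyhedral, generated by the finite $\Phi_{x,1}$), and one reruns the argument of (ii) with $\Phi_x, \Phi_{x,-1}$ replaced by $\Phi'_x, \Phi_{x,1}$; constructing the separating chamber is the analogous delicate step.
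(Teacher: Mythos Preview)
The paper does not give its own proof of this lemma; it is quoted directly from \cite[Proposition 3.6]{DyKLS}.  Your proposal is therefore an attempt at an independent proof using the machinery of the present paper rather than the Bruhat-interval geometry of \cite{DyKLS}.

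Part (i) is fine, and in (ii)--(iii) your use of Lemma~\ref{ss1.7} to show that the cones are polyhedral (generated by $\Phi_{x,\pm 1}$), together with the dihedral argument that roots outside $\Phi_{x,\pm 1}$ lie in the relative interior of a two-dimensional face, is correct.  The genuine gap is the converse direction: showing that every $\alpha\in\Phi_{x,-1}$ actually \emph{is} an extreme ray.  You propose to exhibit $z\in W$ with $\Phi_z\cap\Phi_x=\{\alpha\}$ via a ``reverse prefix'' $z=s_{i_j}s_{i_{j-1}}\cdots s_{i_1}$, but you do not verify that this $z$ has the required property, and indeed for a generic reduced expression it does not (the intersection $\Phi_z\cap\Phi_x$ will typically be larger).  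You acknowledge this as ``the delicate step'' and invoke Theorem~\ref{ss1.8} and ``$2$-closure machinery'' without saying how; this is not a proof.  Note also that the obvious $2$-closure fact one extracts from Lemma~\ref{ss1.7}---namely $\alpha\notin\ol{\Phi_x\sm\{\alpha\}}$---is too weak, because $\ol{\G}\subseteq d(\G)$ in general and not conversely.  Nor can you appeal to the $d$-analogue of Lemma~\ref{ss1.7}, since the paper derives that analogue in Proposition~\ref{ss10.5} \emph{from} the present lemma, which would be circular.

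In short: your approach is a plausible alternative to the cited proof, but the crucial separating-functional step for $\alpha\in\Phi_{x,\pm 1}$ is left as an unproved claim.
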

\ssect{Analogues for $d$ of some results for $2$-closure}\lb{ss10.5} Recall that $d$-coclosed sets and  $d$-biclosed subsets are defined in the obvious way. 
 \begin{prop*} The $d$-closure analogues of
 $\text{\rm Theorem \ref{ss1.5}}$, $\text{\rm Lemma \ref{ss1.7}}$,
 $\text{\rm Theorem \ref{ss1.8}}$ and  $\text{\rm Proposition \ref{ss6.2}}$ are all true.
 \end{prop*}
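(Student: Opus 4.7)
The plan is to reduce each of the four $d$-closure analogues to its corresponding $2$-closure version via two key facts: \emph{(i)} every $d$-closed subset of $\Phi$ is automatically $2$-closed, so $\ol{\Gamma}\seq d(\Gamma)$ for every $\Gamma\seq \Phi$; and \emph{(ii)} the distinguished sets $\Phi_x$ and $\Phi_x'$ are themselves $d$-closed for every $x\in W$.

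First I would verify \emph{(ii)}.  If $\a\in \Phi\cap \bbR_{\geq 0}\Phi_x$, write $\a=\sum_i c_i\b_i$ with $c_i\geq 0$ and $\b_i\in \Phi_x$; then $\a\in \Phi_+$, while $x^{-1}(\a)=\sum_i c_ix^{-1}(\b_i)\in \bbR_{\leq 0}\Pi\cap \Phi=\Phi_-$, using that each $x^{-1}(\b_i)\in \Phi_-\seq \bbR_{\leq 0}\Pi$.  Thus $\a\in \Phi_x$.  The $d$-closedness of $\Phi_x'=\Phi_+\cap x(\Phi_+)$ is dual: one applies $x^{-1}$ and observes that the relevant combination now lands in $\bbR_{\geq 0}\Pi\cap \Phi=\Phi_+$.

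Granted \emph{(i)} and \emph{(ii)}, the $d$-analogues of Theorem \ref{ss1.5}, of the closure equalities in Theorem \ref{ss1.8}, and of Corollary \ref{ss6.2} all follow from a single sandwich argument.  For each relevant input set $\Gamma$ (one of $\cup_{x\in X}\Phi_x$, $\cup_{x\in X}\Phi_x'$, $\Phi_x\cup\set{\a}$, $(\Phi_x\cup\Phi_{s_\a x})\sm\set{\a}$, or $\Phi_x\cup \Phi_y'$), the already-proved $2$-closure version identifies $\ol{\Gamma}$ as some specific $\Phi_z$ or $\Phi_z'$.  Then $\ol{\Gamma}\seq d(\Gamma)$ by \emph{(i)}, while $\Gamma\seq \Phi_z$ (or $\Phi_z'$) together with the $d$-closedness of $\Phi_z$ (or $\Phi_z'$) yields $d(\Gamma)\seq \Phi_z=\ol{\Gamma}$; hence $d(\Gamma)=\ol{\Gamma}$.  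The purely order-theoretic content of these results (existence of joins and meets in weak order, and the minimality and maximality assertions of Theorem \ref{ss1.8}) makes no reference to the closure operator, so it carries over verbatim.

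For the $d$-analogue of Lemma \ref{ss1.7}, I would argue directly from the cone description of Lemma \ref{ss10.4}.  If $d(\Gamma)=\Phi_x$, then $\Gamma\seq \Phi_x$ forces $\bbR_{\geq 0}\Gamma\seq \bbR_{\geq 0}\Phi_x$, while $\Phi_x=d(\Gamma)\seq \bbR_{\geq 0}\Gamma$ gives the reverse, whence $\bbR_{\geq 0}\Gamma=\bbR_{\geq 0}\Phi_x$.  By Lemma \ref{ss10.4}, each $\a\in \Phi_{x,-1}$ generates an extreme ray of this pointed cone, so any non-negative real expression $\a=\sum_i c_i\b_i$ with $\b_i\in \Gamma$ can only involve $\b_i$ on that same extreme ray, which in the root systems considered forces $\a\in \Gamma$.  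Conversely, $\Phi_{x,-1}\seq \Gamma\seq \Phi_x$ gives $\bbR_{\geq 0}\Gamma=\bbR_{\geq 0}\Phi_{x,-1}=\bbR_{\geq 0}\Phi_x$ by Lemma \ref{ss10.4}, so $d(\Gamma)=d(\Phi_x)=\Phi_x$.  The second half, concerning $\Phi_x'$ and $\Phi_{x,+1}$, is handled identically.

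The main obstacle I foresee is the extreme-ray step in the Lemma \ref{ss1.7} analogue: concluding that an element $\a\in \Phi_{x,-1}$ actually lies in $\Gamma$ itself (rather than merely that some positive real multiple of $\a$ does) requires the standing convention that each extreme ray of $\bbR_{\geq 0}\Phi_x$ meets $\Phi_+$ in a single root; this is unproblematic for the root systems admitted in the paper but warrants a brief explicit verification.
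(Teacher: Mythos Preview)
Your proposal is correct and follows essentially the same approach as the paper: both reduce the $d$-analogues of Theorems \ref{ss1.5}, \ref{ss1.8} and Corollary \ref{ss6.2} to their $2$-closure versions via the sandwich $\ol{\Gamma}\seq d(\Gamma)\seq \Phi_z$ (or $\Phi_z'$), and both handle the $d$-analogue of Lemma \ref{ss1.7} directly from the extreme-ray description in Lemma \ref{ss10.4}. The only notable difference is cosmetic: you verify fact \emph{(ii)} by a direct computation ($x^{-1}$ sends a non-negative combination of elements of $\Phi_x$ into $\bbR_{\leq 0}\Pi$), whereas the paper argues that $\Phi_x$ and $\Phi_x'$ can be strictly separated by a linear hyperplane (the image under $x$ of one separating $\Phi_+$ from $\Phi_-$); these are equivalent viewpoints, and your caveat about a single positive root per extreme ray is appropriate and matches what the paper uses implicitly.
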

 \begin{proof}
 Any $d$-closed set is obviously closed. 
 Hence, for any $\G\seq\Phi_{}$, we have $\ol{\G}\seq d(\G)$.  Also, $\Phi_{w}$ and $\Phi'_{w}$ are $d$-biclosed for $w\in W$; this holds since $\Phi_{+}$ and $-\Phi_{+}$ can be strictly separated by a (linear) hyperplane,
and hence so can $w(\Phi_{+})\supseteq \Phi_{w}'$ and $w(-\Phi_{+})\supseteq \Phi_{w}$.
Hence if $\G\subseteq \Phi$ is  such that $\ol{\G}$ is $d$-closed
(e.g. $\ol{\G}=\Phi_{w}$ or $\ol{\G}=\Phi'_{w}$)
then $\ol{\G}=d(\G)$.

 Using the previous paragraph, one sees that the analogues for $d$ of Theorems \ref{ss1.5}  and \ref{ss1.8} and Proposition \ref{ss6.2} hold mutatis mutandis. In each case, the $d$-analogue follows from the corresponding statement for $2$-closure.
For example, suppose $X\seq W$ has an upper bound. Then by Theorem \ref{ss1.5}(a), $\ol{\cup_{x\in X}\Phi_{x}}=\Phi_{y}$ where $y=\join X$,  and so by above, $d({\cup_{x\in X}\Phi_{x}})=\Phi_{y}$ i.e. we have proved the $d$-analogue of \ref{ss1.5}(a).

The $d$-analogue of  Lemma \ref{ss1.7} is proved using Lemma \ref{ss10.4}.
In fact, that Lemma implies that for $\a\in \Phi_{x,1}$ (resp., $\a\in \Phi_{x,-1}$), one has 
$\a\not \in d(\Phi'_{x}\sm\set{\a})$ (resp., $\a\not \in d(\Phi_{x}\sm\set{\a})$). 
On the other hand, $d(\Phi_{x,1})=\Phi'_{x}$  (resp., $ c(\Phi_{x,-1})=\Phi_{x}$) holds by the  Lemma and the first paragraph above.
These remarks  easily imply that the $d$-analogue of  Lemma \ref{ss1.7} holds. 

\end{proof}
\begin{rem*} 

(1) One can show  that $\CL_{s}$ in Theorem \ref{ss1.11} is the set  of 
all finite $d$-biclosed subsets of $\L$ (one uses  simple arguments in convex geometry beginning with  Lemma \ref{ss9.3} and the possibility of separating  $\Phi_{w}$ and $\Phi'_{w}$ by a linear hyperplane).
Then   the $d$-closure analogues of 
Theorem \ref{ss1.11} and    Proposition \ref{ss9.5} follow from the  corresponding results for $2$-closure exactly as above.

(2) The analogue for $d$ of Theorem \ref{ss6.1} cannot be deduced
from Theorem \ref{ss6.1} in the same way as the other results above   (since if $\G\seq \Phi_{+}$ is $d$-coclosed and $d(\G)$ has finite complement in $\Phi_{+}$, one can not  deduce that $\ol \G$ has finite complement and   apply Theorem \ref{ss6.1}; one only knows $\ol{\G}\seq d(\G)$).  However, a simple argument  involving  convex geometry shows that if $\G$ is $d$-coclosed and $\a\in \Pi\sm \G$ then  $s_{a}\cdot \G$ is $d$-coclosed; using this and the $d$-analogue of Lemma \ref{ss1.7}, one sees that Theorem \ref{ss6.1} and its proof also hold mutatis mutandis  for  $d$.
\end{rem*}

\def\Dbar{\leavevmode\lower.6ex\hbox to 0pt{\hskip-.23ex \accent"16\hss}D}

\end{document}